\documentclass[a4paper,reqno,11pt]{amsart}
\usepackage{comment}
\usepackage[T1]{fontenc}
\usepackage[utf8]{inputenc}
\usepackage{amssymb, amsmath, amsthm, graphicx, enumerate, enumitem, color}
\usepackage[dvipsnames,svgnames,table]{xcolor}
\usepackage[foot]{amsaddr}
\usepackage[normalem]{ulem}

\makeatletter
\def\thm@space@setup{
\thm@preskip=4mm
\thm@postskip=0mm
}
\makeatother

\usepackage{hyperref}
\usepackage[capitalise, compress, noabbrev]{cleveref}
\definecolor{linkblue}{named}{MidnightBlue}
\hypersetup{colorlinks=true, linkcolor=linkblue,  anchorcolor=linkblue,
	citecolor=linkblue, filecolor=linkblue, menucolor=linkblue,
	urlcolor=linkblue} 
\usepackage{mathtools}
\usepackage{thmtools, thm-restate}
\usepackage[longnamesfirst,numbers,sort&compress]{natbib}

\theoremstyle{plain}
\newtheorem{thm}{Theorem}
\newtheorem*{thm*}{Theorem}
\newtheorem{theorem}[thm]{Theorem}

\newtheorem{lemma}[thm]{Lemma}
\newtheorem*{lemma*}{Lemma}
\newtheorem{cor}[thm]{Corollary}
\newtheorem*{cor*}{Corollary}

\newtheorem{prop}[thm]{Proposition}
\newtheorem*{claim}{Claim}
\newtheorem*{lem*}{Lemma}

\newtheorem*{conjecture*}{Conjecture}

\newenvironment{proofclaim}[1][]
    {\let\oldqed\qedsymbol\renewcommand{\qedsymbol}{\ensuremath{\lozenge}}\begin{proof}[Proof of the claim] }{\end{proof}\renewcommand{\qedsymbol}{\oldqed}}

\let\ge\geqslant
\let\leq\leqslant
\let\geq\geqslant

\let\subset\subseteq

\let\epsilon\varepsilon

\DeclareMathOperator\pw{pw}
\DeclareMathOperator\tw{tw}
\DeclareMathOperator\td{td}

\DeclareMathOperator\ppw{ppw}

\definecolor{brightmaroon}{rgb}{0.76, 0.13, 0.28}
\newcommand{\defin}[1]{\emph{\textcolor{brightmaroon}{#1}}}

\setenumerate{label=\textup{(\roman*)}, noitemsep, topsep=-\parskip, 
labelindent=.2em, leftmargin=*, widest=iii,}
\setitemize{noitemsep, topsep=-\parskip, labelindent=.2em, leftmargin=*, widest=iii,}

\title{Blow-up structure of graphs excluding a tree or an apex-tree as a minor}

\begin{document}

\author[Claus]{Quentin Claus}
\address[Q.~Claus]{D\'epartement de Mathématiques, Universit\'e libre de Bruxelles, Belgium}
\email{quentin.claus@ulb.be}

\author[Joret]{Gwena\"el Joret}
\address[G.~Joret]{D\'epartement d'Informatique, Universit\'e libre de Bruxelles, Belgium}
\email{gwenael.joret@ulb.be}

\author[Rambaud]{Clément Rambaud}
\address[C.~Rambaud]{Universit\'e Côte d'Azur, CNRS, Inria, I3S, Sophia Antipolis, France}
\email{clement.rambaud@inria.fr}

\thanks{Q.\ Claus and G.\ Joret are supported by the Belgian National Fund for Scientific Research (FNRS)}

\begin{abstract}
    We prove blow-up structure theorems for graphs excluding a tree or an apex-tree as a minor. 
    First, we show that for every $t$-vertex tree $T$ with $t\geq 3$ and radius $h$, and every graph $G$ excluding $T$ as a minor, there exists a graph $H$ with pathwidth at most $2h-1$ such that $G$ is contained in $H\boxtimes K_{t-2}$ as a subgraph. This improves on a recent theorem of Dujmović, Hickingbotham, Joret, Micek, Morin, and Wood (2024), who proved the same result but with a larger bound on the order of the complete graph in the product. 
    
    Second, we show that for every $t$-vertex tree $T$ with $t\geq 2$,  radius $h$ and maximum degree $d$, and every graph $G$ excluding the apex-tree $T^+$ as a minor, where $T^+$ is the tree obtained by adding a universal vertex to $T$, there exists a graph $H$ with treewidth at most $4h-1$ such that $G$ is contained in $H\boxtimes K_{2(t-1)d}$.     
    The bound on the treewidth of $H$ is best possible up to a factor $2$, and improves on a $2^{h+2}-4$ bound that follows from a recent result of  Dujmović, Hickingbotham, Hodor, Joret, La, Micek, Morin, Rambaud, and Wood (2024). 
\end{abstract}

\maketitle

\section{Introduction}

In their \textit{Graph Minors} series of papers, Robertson and Seymour showed many results about the structure of graphs excluding a fixed graph as a minor.  
The very first of these results states that every graph $G$ excluding a $t$-vertex tree $T$ as a minor has pathwidth bounded by some function of $t$~\cite{robertson1983graph}.  
Later, Bienstock, Robertson, Seymour and Thomas \cite{bienstock1991quickly} (see also Diestel \cite{diestel1995graph}) 
showed an optimal upper bound of $t-2$ on the pathwidth of $G$:

\begin{theorem}[Bienstock, Robertson, Seymour and Thomas \cite{bienstock1991quickly}]
  \label{bienstock}
    Let $T$ be a tree on $t\geq 2$ vertices,  and let $G$ be a graph excluding $T$ as a minor. Then $\pw(G)\leq t-2$.
\end{theorem}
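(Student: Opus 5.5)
The plan is to follow Diestel's short proof. We prove the contrapositive in a stronger, inductive form: if $\pw(G)\geq t-1$, then $G$ contains every tree on $t$ vertices as a minor. We work with the dual characterization of pathwidth in terms of a monotone ``vertex separation'' game, or equivalently with blockages. Concretely, we use the following standard duality (Bienstock--Robertson--Seymour--Thomas): $\pw(G)\geq k$ if and only if $G$ has a \emph{$k$-blockage}. Call a pair $(A,B)$ with $A\cup B=V(G)$ and no edges between $A\setminus B$ and $B\setminus A$ a separation. A $k$-blockage is then a family $\mathcal{B}$ of such separations $(A,B)$ of order $|A\cap B|\leq k$ satisfying three conditions:
\begin{enumerate}
\item it contains $(X,V(G))$ whenever $|X|\leq k$;
\item if $(A,B)\in\mathcal{B}$, then $|A|$ is small in the sense that $(A,B)$ does not ``cover'' the graph;
\item it is closed under extension of separations by one vertex whenever the order stays at most $k$.
\end{enumerate}
I would take this duality as the black box, since the paper only quotes the theorem; if proving it, that minimax step is the main obstacle. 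A cleaner self-contained route, which I would commit to, is Diestel's direct argument instead.

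In Diestel's direct argument, fix an enumeration $v_1,\dots,v_t$ of $T$ such that each $v_i$ with $i\geq 2$ has exactly one neighbour among earlier vertices. We prove by induction on $i$ the following statement: if $G$ has no $T$-minor, then either $\pw(G)\leq t-2$ directly, or we can grow a model of $T[v_1,\dots,v_i]$ in $G$. For the growth to continue, we maintain a separation $(A,B)$ of order at most $t-1$ with the following properties. Each branch set of the current partial model meets $A\cap B$ in exactly one vertex. The part $G[A]$ has a path decomposition of width at most $t-2$ whose last bag contains $A\cap B$. Finally, the branch sets lie in $G[B]$ with their ``handles'' in $A\cap B$.

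The induction step is where the work lies. Take the vertex $x\in A\cap B$ representing the branch set that must receive a new child. Either $x$ has a neighbour $y\in B\setminus A$, or it has none.
\begin{enumerate}
\item If it has one, we add $y$ as the new branch set. If $x$'s branch set still needs further children later, we instead add $y$ to $x$'s branch set and keep a fresh handle. We then move $y$ into $A$ by appending a bag $(A\cap B)\cup\{y\}$ of size at most $t-1$, i.e.\ width at most $t-2$.
\item If $x$ has no neighbour in $B\setminus A$, then $x$ is no longer needed in the separator. We drop it, appending the bag $A\cap B$ and removing $x$ from the boundary, and we reroute its branch set's remaining demands through other handles.
\end{enumerate}
Once all $t$ vertices are placed we have a $T$-minor, which is a contradiction. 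Otherwise $B\setminus A$ becomes empty, and we have built a path decomposition of $G$ of width at most $t-2$.

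The delicate point, which I expect to be the main obstacle, is the bookkeeping in case (ii). A handle may be discarded only if its tree vertex has received all its children. When a handle has no outside neighbours but still needs children, we must show this cannot happen within a component we still have to process. The fix is to process $G$ component by component of $G-A$, choosing the enumeration of $T$ adaptively. When a handle dies prematurely, we restart the embedding inside the remaining component with a smaller boundary, and we argue by induction on $|V(G)|$ together with the count of live handles. The invariant is that the number of live handles is at most the number of tree vertices not yet completed. Then $|A\cap B|+1\leq t-1$ holds throughout, since at least one leaf is always pending. This gives the bound $t-2$ on the width, and the tightness of this count is exactly why the answer is $t-2$ rather than $t-1$.
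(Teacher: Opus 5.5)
The paper gives no proof of this theorem. It is quoted from Bienstock, Robertson, Seymour and Thomas, and only a lemma is extracted from Diestel's proof. So your argument has to stand on its own, and it has a genuine gap exactly where you locate the difficulty.

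\textbf{The separator is not charged to the model.} An argument of this type needs every vertex of $A\cap B$ to be the unique handle of a distinct branch set of a model of a subtree on exactly $|A\cap B|$ vertices. Only then does a forced bag of size $t$ mean that the last tree vertex can be attached. Your moves break this charging.
\begin{itemize}
\item In case (i), absorbing $y$ into the branch set of $x$ puts $y$ into the separator, since you append $(A\cap B)\cup\{y\}$. Meanwhile $x$ may stay there, because it can have further neighbours in $B\setminus A$. So the separator grows with no new tree vertex.
\item Your final invariant, live handles at most uncompleted tree vertices, bounds the wrong quantity. It allows up to $t$ live handles and ignores separator vertices that are not live handles.
\item The remark that some leaf is always pending does not yield $|A\cap B|\le t-2$.
\end{itemize}

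\textbf{Case (ii) has no working mechanism.} Suppose the handle $x$ of a tree vertex $s$ that still needs a child has no neighbour in $B\setminus A$. If every separator vertex is a handle, then every path from $W_s$ to $B\setminus A$ passes through another branch set, so $s$ can never receive its child. ``Rerouting its demands through other handles'' is not a legal operation on a model, since branch sets must stay disjoint. Simply dropping $x$ leaves $s$ permanently incomplete. Restarting inside a component $C$ of $G-A$ does not close the induction either. The vertices of $N(C)$ must still occupy the early bags, but they no longer belong to a completable model. Induction with the same $T$ only bounds the width of $G[C]$ itself by $t-2$, and $N(C)$ comes on top of that.

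\textbf{The missing idea is an extremal choice plus relinking.} The standard remedy, in the Bienstock--Robertson--Seymour--Thomas proof, makes case (ii) impossible by the choice of separation rather than by local bookkeeping. It is mirrored in Claims 1--3 of the appendix, with a bramble in place of a blockage.
\begin{itemize}
\item Take $(A,B)$ of order $\ell$ that left-contains $T_\ell$ and lies in a blockage (the object dual to path-decompositions of width less than $t-1$).
\item Require that no separation of smaller order in the blockage lies further to the right, and choose $(A,B)$ extremal subject to this.
\item Then every $x\in A\cap B$ has a neighbour in $B\setminus A$; otherwise $(A,B\setminus\{x\})$ contradicts the choice.
\item Menger's theorem transfers the model of $T_\ell$ to any same-order separation further right, after which one extends to $T_{\ell+1}$.
\end{itemize}
Your opening paragraph points toward blockages, but the axioms you list are not the right ones. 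Moreover, the duality theorem you would take as a black box is a substantial part of that proof.
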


While an upper bound of $t-2$ on the pathwidth is best possible in general, Dujmović, Hickingbotham, Joret, Micek, Morin, and Wood~\cite{dujmovic2024excluded}  
recently showed that if the tree $T$ has radius $h$ (with possibly $h \ll t$), then a graph excluding $T$ as a minor is in fact ``not far'' from a graph of pathwidth $O(h)$: 

\begin{theorem}[Dujmovi{\'c}, Hickingbotham, Joret, Micek, Morin, Wood \cite{dujmovic2024excluded}]
\label{badtree}
    For every tree $T$ with $t$ vertices, radius $h$, and maximum degree $d$, for every $T$-minor-free graph $G$, there exists a graph $H$ such that $\pw(H) \leq 2h-1$ and $G$ is contained in $H \boxtimes K_{(d+h-2)(t-1)}$.\footnote{Here and in the rest of the paper, by `$G$ is contained in $J$' we mean that $G$ is {\em isomorphic} to a subgraph of the graph $J$.}    
\end{theorem}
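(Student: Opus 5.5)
The plan is to build a layered partition of $G$ from a BFS-type decomposition. Then show that each bounded piece has at most $(d+h-2)(t-1)$ vertices, and that the quotient graph has pathwidth at most $2h-1$. Since $G$ is contained in $H\boxtimes K_m$ exactly when $G$ has an $H$-partition with parts of size at most $m$, it suffices to find a partition $\mathcal{P}$ of $V(G)$ into parts of size at most $(d+h-2)(t-1)$ whose quotient $G/\mathcal{P}$ has pathwidth at most $2h-1$. We may assume $G$ is connected, since otherwise we treat components separately and take disjoint unions, which preserves the pathwidth bound.

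The construction is by induction on $h$, rooting $T$ at a center $r$ so that every vertex of $T$ is at depth at most $h$.

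\textbf{Base case $h=1$.} Here $T$ is the star $K_{1,t-1}$ and $d=t-1$. We need parts of size at most $(t-2)(t-1)$ whose quotient has pathwidth at most $1$, i.e.\ the quotient is a caterpillar forest.

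\textbf{Inductive step.} Fix a vertex $v$ and take BFS layers $L_0,L_1,\dots$ from $v$. Consider the subtrees $T_1,\dots,T_k$ of $T$ hanging from the children of $r$; each has radius-from-root at most $h-1$, and $k\le d$. Inside the graph induced on the union of consecutive layers (or their connected pieces), we separate into two cases.

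\emph{Case 1: a component of $G[L_{\ge i}]$ is rich.} Suppose it contains disjoint connected subgraphs, each touching the layer $L_{i-1}$ side, realizing $T_1,\dots,T_k$ as rooted minors. Then contracting the ball toward $v$ produces the root $r$ and hence a $T$ minor, a contradiction.

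\emph{Case 2: no component is rich.} Then each component of each layer-slab excludes some $T_j$-type rooted minor. This is where the Erdős–Pósa-style counting enters. We greedily pick at most $t-1$ vertices per tree-slot; there are about $d+h-2$ slots per level. After deleting these vertices (they become one part), the remainder excludes a tree of radius $h-1$, and induction applies within each component.

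We then assemble the path decompositions. We concatenate the decompositions of the components across layers and add the current "separator" part to every bag of the part below. Each induction level adds $2$ to the width, since the quotient sees parts from two adjacent layers. This yields $2h-1$ from a base of $1$.

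\textbf{Main obstacle.} The hardest step is the bookkeeping of part sizes. We must show that the vertices which must be put together at a level number at most $(d+h-2)(t-1)$, rather than growing multiplicatively with depth. A naive recursion would give something like $(t-1)^h$. The first thing to try is to make parts always consist of vertices inside a single BFS layer intersected with one "branch", charging them to a fixed embedded partial copy of $T$ with at most $t-1$ non-root vertices. The factor $d+h-2$ should then arise as the number of children plus ancestors that a single layer-piece can simultaneously be responsible for.
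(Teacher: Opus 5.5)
Your proposal has genuine gaps. It is an outline whose two load-bearing steps are either unjustified or false, and you yourself leave the part-size bound open, which is the actual content of the theorem. (The base case is also only stated; it holds because each BFS layer has at most $t-2$ vertices, since otherwise contracting the ball yields $K_{1,t-1}$.)

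\textbf{The hitting set.} ``Greedily pick at most $t-1$ vertices per tree-slot'' does not produce a set meeting every rooted model of some $T_j$, and nothing you write explains why a small hitting set should exist. In the argument the paper relies on, $G$ excludes $T_{h,d}$ and the width in \cref{lemmearticle} is $(d+h-2)(\pw(G)+1)$. So the factor $t-1$ is exactly $\pw(G)+1$, using $\pw(G)\le t-2$ from \cref{bienstock}, which you never invoke. The Erd\H{o}s--P\'osa step is then a separation argument on a path-decomposition of width $\pw(G)$. Take the shortest prefix of bags whose union contains a rooted model of some component of the forest left after deleting the root of the tree. Delete the last bag of that prefix: it has at most $\pw(G)+1$ vertices and separates the prefix from the rest. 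Then recurse on the remaining graph with the remaining components. This yields $X$ of size at most $(k-1)(\pw(G)+1)$, where $k$ is the number of components, such that $G-r-X$ has no rooted model of one component $T'$ (compare \cref{erdos_posa_forest_general}). Greedy vertex picking gives nothing of the kind.

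\textbf{The remainder after deleting $X$.} Your claim that ``the remainder excludes a tree of radius $h-1$'' is false. Only the components of $G-r-X$ containing a neighbour of $r$ lose the rooted models of $T'$. Even these only lack \emph{rooted} models, so they need not be $T'$-minor-free, and your induction must be set up to handle that. Components avoiding $N_G(r)$ can carry full height-$h$ structure, so no induction on $h$ applies to them, and your BFS slabs do not remove this problem.

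The missing idea is the contraction step of \cref{metalemme}:
\begin{enumerate}
\item Take $X$ inclusion-minimal, so each $v\in X$ is joined to $r$ by a path internally disjoint from $X$.
\item Contract $r$ together with these paths into one vertex $r'$, and keep only the far components.
\item Apply induction on $|V(G)|$ at the \emph{same} height, using the strengthened invariant that the root is a singleton part lying in the first bag.
\end{enumerate}
The final decomposition is the bag $\{\{r\},X\}$, then the decompositions of the near components (width $2h-3$ by induction on $h$) with $\{r\}$ and $X$ added to every bag, then the decomposition of the contracted minor with $\{r'\}$ replaced by $X$. This is where the ``$+2$ per level'' really comes from: the two parts $\{r\}$ and $X$, not ``two adjacent layers''. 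It is also why part sizes never compound: every part is $\{r\}$, a single hitting set $X$, or a part produced by a recursive call. Without this first-bag invariant and the contraction, your concatenation of decompositions across layers and components has no reason to stay within width $2h-1$.
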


Informally, the theorem shows that $G$ is then contained in a graph of pathwidth $2h-1$, where every vertex has been ``blown up'' by a clique whose size depends only on $T$. 
The upper bound of $2h-1$ on the pathwidth of $H$ is best possible, as shown in~\cite{dujmovic2024excluded}. 
However, determining the best possible bound on the size of the clique was left open. 
Our first result is the following improvement on the latter: 

\begin{restatable}{theorem}{restatetree} 
\label{tree}
     For every tree $T$ with $t\geq 3$ vertices and radius $h$, and for every $T$-minor-free graph $G$, there exists a graph $H$ such that $\pw(H) \leq 2h-1$ and $G$ is contained in $H \boxtimes K_{t-2}$.     
\end{restatable}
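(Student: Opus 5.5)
Let $r$ be a center of $T$, so every vertex of $T$ is at distance at most $h$ from $r$. We may assume $G$ is connected, since the statement is closed under disjoint unions: take a disjoint union of the graphs $H$, which does not increase pathwidth. The plan is to build a layered decomposition of $G$ in the spirit of the proof of \cref{badtree}, but to bound the parts using \cref{bienstock} applied to subtrees of $T$ rather than a crude degree-based count.

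Fix a root vertex $v$ and consider the BFS layering $L_0, L_1, \dots$ of $G$ from $v$. The key intermediate statement is inductive on $h$. If a connected graph $G$ has no $T$-minor, where $T$ has radius $h$ and is rooted at its center $r$, then $V(G)$ can be partitioned into parts $P_1, \dots, P_m$ with two properties: each part has at most $t-2$ vertices, and the quotient graph $H=G/\{P_i\}$ has a path decomposition of width at most $2h-1$.

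To construct the parts, remove a small connected "root part" $P$ containing $v$ and look at the components $C$ of $G-P$. Each such $C$ must exclude the forest $T-r$, or more precisely the branches $T_1,\dots,T_k$ of $T-r$, each of radius at most $h-1$. One component cannot contain all of them as disjoint minors that attach to $P$, since otherwise contracting $P$ to a single vertex yields $T$. Each component $C$ adjacent to $P$ will then exclude some structure of radius $h-1$ and be handled by induction. The path decompositions of the components are concatenated, and the neighbourhood of $P$ is added to every bag. This adds $2$ to the width per level: one for $P$ and one for the attaching part. This gives $2h-1$, starting from the base case $h=1$, where $T$ is a star $K_{1,t-1}$. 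For the base case, a connected star-minor-free graph has maximum degree at most $t-2$ after suitable partitioning, and $K_{1,t-1}$-minor-free connected graphs are paths or cycles or bounded in size. Here we need a path decomposition of width $1$ of the quotient with parts of size at most $t-2$, and this is done directly.

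To get the clique size exactly $t-2$ instead of $(d+h-2)(t-1)$, I would choose the parts greedily via a maximum collection of disjoint subtrees. Let the part $P$ be a vertex set such that $G[P]$ is connected and $|P|\le t-2$. It is obtained by growing a BFS-like subtree and stopping before $T$ could be embedded. Equivalently, I would use the proof technique of Diestel's short proof of \cref{bienstock}: maintain a partial embedding of $T$ and a separation $(A,B)$ of order at most $t-2$. The separators of that proof have size at most $t-2$, and they serve as the parts. The radius enters by reorganizing the order in which vertices of $T$ are embedded, namely by depth from $r$. The quotient graph then has a path decomposition in which each bag contains at most $2h$ parts: at each depth we keep the current separator and the next one.

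The main obstacle is this last step: adapting the Bienstock–Robertson–Seymour–Thomas/Diestel argument so that the separators, which have size at most $t-2$, can be grouped into depth-indexed parts. Each bag of the quotient should meet only two parts per depth level. In addition, the total blow-up must remain $t-2$ rather than growing with $h$. I would first try to prove a strengthened statement by induction on $|V(G)|$ that tracks a "frontier" of at most $h$ nested separators, each of size at most $t-2$ and each attached to an embedded depth level of $T$.
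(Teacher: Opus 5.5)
\textbf{There is a genuine gap: the proposal stops exactly at the step that produces parts of size $t-2$ and lowers the radius, and the recursion around that step has a concrete flaw.} You flag this step yourself as the ``main obstacle'' and leave it open.

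After you remove a root part $P$, a component $C$ of $G-P$ only fails to contain the whole forest $T-r$ with every root attached to $P$. It may still contain each individual branch, so it excludes nothing of smaller height, and induction on $h$ does not apply to it. For example, take $T=P_5$ rooted at its middle vertex, $G=K_4$ and $P=\{v\}$. The triangle $G-v$ contains an edge rooted in $N(v)$, but not two disjoint ones. You must first delete a hitting set $X$ so that, in $G-r-X$, some single branch $T'$ has no weakly $N(r)$-rooted model. The whole improvement from $(d+h-2)(t-1)$ to $t-2$ lies in showing that such an $X$ exists with $|X|<|V(T-r)|=t-1$.

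This is the paper's \cref{newbound}, which comes from the Erd\H{o}s--P\'osa-type \cref{ErdosPosaForest}. Its proof runs as follows. Take a path-decomposition of width at most $t_m-1$ of a set $Y$ given by \cref{Diestel}, or of $G$ itself if its pathwidth is small. Find the shortest prefix of bags containing a weakly $R$-rooted model of some component. Put its last bag into $X$, and recurse on the rest, with the accounting $|X|\le\sum_{i\in I}x_it_i-t_{\max(I)}$. So Diestel's argument is used to build the deleted set, not to supply the parts. The only parts added to every bag are $\{r\}$ and $X$. Your plan of adding ``the neighbourhood of $P$'' to every bag cannot work, since $N(P)$ is unbounded in general: $K_{1,n}$ has no $P_5$ minor, and $v$ may be its centre.

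\textbf{Two further ingredients are missing.} First, the induction must be on a rooted statement, as in \cref{lemmearticlevariante}. The hypothesis is that there is no weakly $\{r\}$-rooted model of a rooted tree of height $h$. The conclusion is a partition of width at most $t-2$ having $\{r\}$ as a part, together with a path-decomposition of the quotient of width at most $2h-1$ whose first bag contains $\{r\}$. This is needed because the subproblems only exclude rooted models, so ``$G$ is $T$-minor-free'' is not inherited.

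Second, the components of $G-X-r$ that miss $N(r)$ are not controlled by the choice of $X$ and need a separate argument. The paper contracts $r$, together with paths from $r$ to the vertices of $X$ (which exist by minimality of $X$), into a new root $r'$. It applies induction on $|V(G)|$ to this minor and substitutes the part $X$ for $\{r'\}$; see \cref{metalemme}. This is what makes the width grow by exactly $2$ per level.

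\textbf{A smaller error in the base case.} Your remark that connected $K_{1,t-1}$-minor-free graphs are paths, cycles or bounded in size is false for $t\ge 5$; a spider with $t-2$ long legs is a counterexample. The BFS-layering argument you start with does, however, settle $h=1$.
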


The bound of $t-2$ is almost optimal:  The graph $K_{t-1}$ is $T$-minor-free and yet, if $K_{t-1}$ is contained in $H\boxtimes K_c$ for some graph $H$ with $\pw(H)\leq 2h-1$, then we must have $c \geq (t-1)/2h$. This is because $t-2 = \pw(K_{t-1}) \leq \pw(H\boxtimes K_c) \leq  c(\pw(H)+1) - 1 \leq 2ch - 1$. 

Next, we turn our attention to graphs excluding a fixed graph $T^+$ as a minor, where $T$ is a tree and $T^+$ denotes the graph obtained from $T$ by adding a new vertex adjacent to all other vertices. 
Since $T^+$ is planar, it follows from the Grid-Minor Theorem of Robertson and Seymour~\cite{robertson1986graph} that graphs excluding $T^+$ as a minor have bounded treewidth. 
Later, Leaf and Seymour~\cite{leaf2015tree} gave the following bound on the treewidth: 

\begin{theorem}[Leaf and Seymour~\cite{leaf2015tree}]
\label{leafseymour}
Let $T$ be a tree and let $G$ be a $T^+$-minor-free graph.  Then $\tw(G)\leq \frac{3(|V(T^+)|-1)}2$. 
\end{theorem}

The upper bound on the treewidth was recently improved to $|V(T^+)|-2$ by Liu and Yoo~\cite{liu2025treewidthgraphexcludingapexforest}, which is best possible: 

\begin{theorem}[Liu and Yoo~\cite{liu2025treewidthgraphexcludingapexforest}]
\label{leafseymour-improved}
Let $T$ be a tree, and let $G$ be a $T^+$-minor-free graph.  Then $\tw(G)\leq |V(T^+)|-2$. 
\end{theorem}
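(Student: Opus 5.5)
The statement is equivalent to: if $G$ has treewidth at least $t$, where $t=|V(T)|$ and so $|V(T^+)|-2=t-1$, then $G$ contains $T^+$ as a minor. I would argue through brambles. By treewidth duality, $\tw(G)\geq t$ gives a bramble $\mathcal{B}$ of order at least $t+1$: every set of at most $t$ vertices misses some element of $\mathcal{B}$. The aim is to build a $T^+$-model greedily inside this bramble, in the spirit of Diestel's short proof of \cref{bienstock}.

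First I would fix the apex branch set. I would choose a connected set $A$ that touches every element of $\mathcal{B}$; for instance, take a bramble element and grow it minimally. Deleting $A$, or contracting it to a single vertex $a$, should cost at most one unit of bramble order. Thus $\mathcal{B}'=\{B\in\mathcal{B}: B\cap A=\emptyset\}$ should remain a bramble of order at least $t$ in $G-A$. Every element of $\mathcal{B}'$ contains a vertex adjacent to $A$, because each such $B$ touches $A$ without meeting it.

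Second, I would embed $T$ in $G-A$ greedily, vertex by vertex along an ordering in which every vertex after the first has exactly one earlier neighbour. I would maintain a model of the current subtree $T'$ satisfying three conditions:
\begin{itemize}
\item each branch set is connected and contains a neighbour of $A$;
\item for each vertex $x$ of $T'$ there is a designated ``frontier'' vertex $f_x$ in its branch set;
\item the frontier set $F=\{f_x\}$, of size at most $|V(T')|\leq t-1$, is such that the branch sets not yet saturated can still reach the part of $\mathcal{B}'$ that avoids $F$.
\end{itemize}
Since $|F|<t$, some element of $\mathcal{B}'$ avoids $F$. I would let $C$ be the component of $G-A-F$ containing all elements of $\mathcal{B}'$ that avoid $F$; the bramble order guarantees that these lie in a single component. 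To attach a new child $y$ to $x$, I would route a path from $f_x$ into $C$. I would then take the branch set of $y$ to be a connected piece of $C$ that contains a bramble element, and hence a vertex adjacent to $A$. Finally, I would update the frontiers so that $|F|$ stays bounded by the number of embedded vertices whose children are not yet all placed. At the end, contracting $A$ to $a$ gives $T^+$.

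The main obstacle is the bookkeeping in the second step. In the pathwidth argument the bound $t-2$ comes from charging exactly one separator vertex per embedded tree vertex. Here I must also ensure that every branch set touches $A$ while using only one extra unit of order, namely the apex. When this is too weak, I would first try to strengthen the induction with a rooted version. The hypothesis would be: for every set $X$ with $|X|\leq t$, there is a tree decomposition of width at most $t-1$ with $X$ in a bag, unless a $T^+$-minor exists in which prescribed vertices of $X$ lie in distinct branch sets adjacent to the apex. I would then prove this by induction on $|V(G)|$, splitting along minimal separations of order at most $t$. The difficulty is to keep the apex connected across the two sides of such a separation, and this is the step I would expect to require genuinely new ideas.
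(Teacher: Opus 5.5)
The statement needs: with $t=|V(T)|$, if $\tw(G)\ge t$ then $G$ has a $T^+$-minor. Your reduction to this, and to a bramble $\mathcal B$ of order at least $t+1$ via treewidth duality, is correct and is where the paper starts. After that there is a genuine gap, and your first step is false as stated. Deleting one vertex lowers a bramble's order by at most one. Deleting (or contracting) a connected set $A$ that touches every element can destroy almost all of it.

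\emph{Counterexample to step one.} In the $k\times k$ grid, take the bramble of crosses (row $i$ together with column $j$), which has order $k$. The rows met by a connected set form an interval. So a connected $A$ touching every cross must meet every row $2,\dots,k-1$ or every column $2,\dots,k-1$. Otherwise there are a row $i'$ and a column $j'$ such that $A$ has no vertex in rows $i'-1,i',i'+1$ and none in columns $j'-1,j',j'+1$, and then $A$ does not touch the cross on row $i'$ and column $j'$. In the first case (the other is symmetric), every cross through rows $2,\dots,k-1$ meets $A$. The crosses avoiding $A$ are then all hit by one vertex of row $1$ and one vertex of row $k$, so $\mathcal B'$ has order at most $2$.

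\emph{Step two also fails on this example.} Crosses pairwise intersect, so two disjoint branch sets can never both contain bramble elements. That was your only way of guaranteeing each branch set sees $A$. The frontier invariant is also never made precise; for instance, $C$ is a component of $G-A-F$, so nothing forces new branch sets to avoid old ones. You leave the fallback rooted induction open yourself. So no proof results.

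\emph{The missing idea} is to reverse the roles of the two sides. The apex should not be carved out of the bramble first; it should \emph{be} the bramble side of an extremal separation, found last. The paper proceeds as follows.
\begin{itemize}
\item Take $\mathcal B$ maximal, so that for $|X|\le t$ there is a unique component $\beta(X)$ of $G-X$ in $\mathcal B$. Order $T$ as $t_1,\dots,t_t$ with each $t_i$ ($i\ge 2$) adjacent to an earlier vertex, and let $T_k:=T[\{t_1,\dots,t_k\}]$.
\item Choose a separation $(A,B)$ of some order $k\in\{1,\dots,t\}$ such that: it left-contains $T_k$ (a $T_k$-model inside $A$ whose branch sets each meet $A\cap B$ in exactly one vertex); $\beta(A\cap B)\subseteq B$; there is no separation $(A',B')$ of order less than $k$ with $A\subseteq A'$, $B'\subseteq B$ and $\beta(A'\cap B')\subseteq B'$; and, subject to these, $|A|-|B|$ is maximum.
\item By Menger's theorem and extremality, no other order-$k$ separation further right has the $\beta$-property. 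Such a separation would either inherit the model through $k$ disjoint paths, contradicting maximality, or yield a smaller separation violating the third condition. Hence $B\setminus A=\beta(A\cap B)$ is connected and every vertex of $A\cap B$ has a neighbour in it.
\item If $k<t$, let $v_{k+1}\in B\setminus A$ be a neighbour of the separator vertex in the branch set of the parent of $t_{k+1}$. Then $(A\cup\{v_{k+1}\},B)$, with $\{v_{k+1}\}$ as a new branch set, contradicts maximality.
\item So $k=t$, and contracting $B\setminus A$ gives an apex adjacent to all $t$ branch sets.
\end{itemize}
The only cost is the $t$ separator vertices, which is exactly where $t-1=|V(T^+)|-2$ comes from. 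Nothing ever requires the bramble to survive removal of a large connected set, or branch sets to contain disjoint bramble elements.
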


Independently, we found that a slight modification of the proof of \cref{leafseymour} gives a different proof of \cref{leafseymour-improved} that is shorter than the one given in~\cite{liu2025treewidthgraphexcludingapexforest}.  This proof is given in the appendix.

Observe that \cref{leafseymour-improved} is similar to \cref{bienstock} 
except that treewidth is being bounded instead of pathwidth. 
It is natural to wonder whether a `blow-up' analogue of \cref{leafseymour-improved} exists, which would be similar to \cref{badtree}. 
In a loose sense, this is known to be the case, as follows from the following variant of the Grid Minor Theorem: 

\begin{theorem}[Dujmovi{\'c}, Hickingbotham, Hodor, Joret, La, Micek, Morin, Rambaud, Wood \cite{dujmovic2024grid}]
 \label{gridrevisited}
    For every planar graph $X$, there exists a positive integer $c$ such that for every $X$-minor-free graph $G$, there exists a graph $H$ of treewidth at most $2^{\td(X)}-4$ such that $G$ is contained in $H \boxtimes K_{c}$.  
\end{theorem}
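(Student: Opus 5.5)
The plan is to argue by induction on $k=\td(X)$, aiming for the recursion $f(k)\le 2f(k-1)+4$ for the treewidth bound $f(k)$ on $H$. This recursion is exactly what $2^{k}-4$ satisfies. The constant $c$ will be allowed to grow arbitrarily with each step, so only the treewidth needs careful accounting. Fix a treedepth decomposition of $X$: a rooted forest $F$ of height $k$ whose closure contains $X$. Since $X$ is planar, we may replace $X$ by a suitable planar supergraph of the same treedepth that is a minor of a grid. Then, by the Grid-Minor Theorem, $X$-minor-free graphs have bounded treewidth, which gives a crude starting decomposition to refine. We may also assume $X$ is connected, so $X=\{r\}+X'$ where $r$ is the root and every component of $X'$ has treedepth at most $k-1$. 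Replacing $X'$ by several disjoint copies of itself costs nothing in treedepth and only enlarges the constant. The base case is the smallest treedepth for which the bound is meaningful (stars or paths), and it is handled directly.

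For the inductive step, take a connected component of $G$, fix a vertex $v$, and consider the BFS layering $L_0,L_1,\dots$ from $v$. The key observation is that for every $i$ the graph $G_i:=G[L_i\cup L_{i+1}]$ is $X''$-minor-free, where $X''$ is a bounded number of disjoint copies of the components of $X'$. Indeed, if $G_i$ contained an $X''$-model with many disjoint copies, we could contract $L_0\cup\dots\cup L_{i-1}$ to a single vertex adjacent to all of $L_i$. This vertex would act as the apex $r$, provided each branch set of the model can be routed to touch $L_i$. That routing is why the plan uses two consecutive layers and the extra copies: they let us choose branch sets that reach the layer below. By induction, each $G_i$ is contained in $H_i\boxtimes K_{c'}$ with $\tw(H_i)\le f(k-1)$.

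Next, glue these structures together. Each vertex of $G$ lies in at most two of the graphs $G_i$. We form $H$ by identifying a vertex of $G$ with the pair of its images in $H_{i-1}$ and $H_i$, after first quotienting out a bounded-size part so that the product width stays bounded; this multiplies the clique size by a constant. To get a tree decomposition of $H$, combine the decompositions of $H_i$ and $H_{i+1}$ along the layer $L_{i+1}$ they share. A bag then consists of one bag from each of two consecutive pieces plus a constant number of extra vertices needed to connect through the contracted lower layers. This yields width at most $2(f(k-1)+1)+2=2f(k-1)+4$.

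The main obstacle is this gluing step. The decompositions of $H_i$ and $H_{i+1}$ are unrelated on their common layer $L_{i+1}$. Making them compatible without losing the factor-$2$ recursion requires either a common refinement along the shared layer or, more likely, a strengthened induction hypothesis. What I would try first is a version of the statement that prescribes the partition on one designated layer (a ``rooted'' or ``layered'' partition in which a given set is absorbed into a bounded number of bags), so that consecutive pieces can be glued along matching parts.
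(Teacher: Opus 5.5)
The paper does not prove this statement; it is quoted from \cite{dujmovic2024grid}. So your plan has to stand on its own, and its central step is false.

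Take $X=K_1+P_3$ (the diamond). It is planar, $\td(X)=3$, and $X'=P_3$. Every tree is $X$-minor-free, since minors of forests are forests. Let $G$ be the tree in which $v$ has a single child $a$, $a$ has children $b_1,\dots,b_M$, and each $b_j$ has $s$ leaf children. With the BFS layering from $v$, the piece $G[L_2\cup L_3]$ is exactly $M$ disjoint copies of $K_{1,s}$. So it contains arbitrarily many disjoint copies of $X'$ (take $s=2$), and in fact every star forest, i.e.\ every graph of treedepth $2=\td(X)-1$, of any prescribed size once $M$ and $s$ are large. Hence no choice of $X''$ of smaller treedepth makes your ``key observation'' true, and the induction on $\td$ cannot be run on plain minor-exclusion of the layers.

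The routing you invoke is exactly what fails here: the leaves of a copy reach $L_2$ only through its centre, and sacrificing other copies creates no new paths. What excluding $X$ actually gives is a \emph{rooted} exclusion: $G-(L_0\cup\dots\cup L_{i-1})$ has no model of $X'$ in which the branch sets of the neighbours of $r$ all meet $L_i$. So the induction hypothesis itself must be rooted: no $S$-rooted model of $Y$ should imply a blown-up decomposition of the pair $(G,S)$ in which each component outside the decomposed part attaches to a single bag. This is the form of \cref{srooted} and \cref{lemmetechniquepartitions}, building on $\pw(G,S)$ from \cite{hodor2024quickly}, in the special case of trees.

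The gluing step, which you already flag, is not merely technical. Bounded blown-up treewidth of every two-layer piece carries no information about the whole graph. In the $n\times n$ grid with BFS from a corner, every $G[L_i\cup L_{i+1}]$ is a path, yet any $H$ with $G\subseteq H\boxtimes K_c$ has $\tw(H)\geq (n+1)/c-1$. The decompositions of consecutive pieces live on unrelated trees, so there is no common indexing tree for ``one bag from each of two consecutive pieces''. Pairing the two partitions only embeds the quotient into products of the $H_i$, which can have unbounded treewidth. The exclusion of $X$ must therefore enter the gluing itself.

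In the paper's special case $X=T^+$ (\cref{technical_for_apexforest}), treewidth is produced not by gluing layers but by recursing into pieces that hang off single bags:
\begin{enumerate}
\item take a rooted decomposition of $(G-u,N_G(u))$ with $|V(H)|$ minimal;
\item use \cref{connexite} to see that $G-V(C)$ is connected for each leftover component $C$;
\item contract $G-V(C)$ to a new root and recurse, keeping the invariant that bags containing the root part are small.
\end{enumerate}
Substituting the attachment bag for the root then roughly doubles bag sizes, and this invariant is where a doubling recursion comes from.

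Finally, your base case does not work as stated. With either convention for $\td$, the first level where $2^k-4\geq 0$ has target $0$. Already $K_{1,3}$ (resp.\ the diamond) lies at that level and is excluded by arbitrarily long paths, which cannot lie in $H\boxtimes K_c$ with $H$ edgeless. So the recursion cannot be anchored at stars with $f=0$.
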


In the above theorem, $\td(X)$ denotes the treedepth of $X$. 
Taking $X=T^+$ for some tree $T$ of radius $h$ in \cref{gridrevisited}, and using the fact that $T$ has treedepth at most $h+1$, and thus $T^+$ has treedepth at most $h+2$,  we obtain the following corollary: 

\begin{cor}[Dujmovi{\'c} {\it et al.} \cite{dujmovic2024grid}, implicit]
\label{cor:gridrevisited}
    For every tree $T$ of radius $h$, there exists a positive integer $c$ such that for every $T^+$-minor-free graph $G$, there exists a graph $H$ of treewidth at most $2^{h+2}-4$ such that $G$ is contained in $H \boxtimes K_{c}$. 
\end{cor}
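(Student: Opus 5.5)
Corollary~\ref{cor:gridrevisited} is a direct specialisation of \cref{gridrevisited} with $X=T^+$, so the plan is to check the hypotheses and bound $\td(T^+)$. First, $T^+$ is planar. Take a plane drawing of the tree $T$; all its vertices lie on the boundary of the unique (outer) face. Put the new apex vertex inside that face and join it by pairwise non-crossing curves to every vertex of $T$. Hence \cref{gridrevisited} applies with $X=T^+$. It gives a constant $c=c(T^+)$, which depends only on $T$, such that every $T^+$-minor-free graph $G$ is contained in $H\boxtimes K_c$ for some graph $H$ with $\tw(H)\le 2^{\td(T^+)}-4$.

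The main work is the treedepth bound $\td(T)\le h+1$ for a tree of radius $h$. Recall that $\td$ is the minimum height, counted in vertices, of a rooted forest whose closure contains the graph. Let $r$ be a centre of $T$, so every vertex is within distance $h$ of $r$, and root $T$ at $r$. The rooted tree itself is then an elimination tree: every edge of $T$ joins a vertex to its parent, hence an ancestor. Its height in vertices is $h+1$, so $\td(T)\le h+1$.

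Next, adding a universal vertex raises treedepth by at most one. Take an elimination forest $F$ for $T$ of height $h+1$ and attach a new root $a$ above all of its components. The apex is then an ancestor of every vertex, so all edges from $a$ lie in the closure, and the old edges remain ancestor–descendant pairs. This gives $\td(T^+)\le h+2$.

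Finally, $2^x$ is monotone, so $\tw(H)\le 2^{\td(T^+)}-4\le 2^{h+2}-4$, which is the claimed bound. No step is a real obstacle; the only points needing care are using vertex-counted height in the treedepth convention and checking that $c$ depends only on $T$. Both follow immediately from the construction above.
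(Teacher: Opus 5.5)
Your proposal is correct and is the paper's argument: apply \cref{gridrevisited} with the planar graph $X=T^+$, use $\td(T)\le h+1$ (root $T$ at a centre) and hence $\td(T^+)\le h+2$ (place the apex above the elimination tree), and conclude by monotonicity of $2^{x}$. You give more detail than the paper, which simply asserts that $T^+$ is planar and states these two treedepth bounds.
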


Our second contribution is to show that the graph $H$ in \cref{cor:gridrevisited} can be chosen so that $H$ has treewidth $4h-1$, which is best possible up to a factor $2$, and answers positively a special case of Question~1 in~\cite{dujmovic2024grid}. 
We also obtain an explicit bound on the clique size $c$ in the blow-up, namely $c \leq 2(t-1)d$. 

\begin{theorem}
    \label{apexforest} 
    For every tree $T$ with $t\geq 2$ vertices, radius $h$, and maximum degree $d$, for every $T^+$-minor-free graph $G$, there exists a graph $H$ of treewidth at most $4h-1$ such that $G$ is contained in $H \boxtimes K_{(2t-1)d}$.
\end{theorem}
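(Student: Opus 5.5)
We plan to follow the strategy behind \cref{badtree}: build a layered partition of $G$ via BFS layerings, and control the width of the quotient using the excluded minor. Since $T^+$ is an apex-tree, we argue inside a \emph{rooted} setting. Fix a vertex $r$ and a connected subgraph $C$ of $G$ together with a set $A$ of ``attachment'' vertices, with all of $A$ adjacent to a common connected set $R$ disjoint from $C$ that plays the role of the apex. The key observation is that if some $C$ contains a $T$-minor whose branch sets all touch the neighbourhood of $R$, then contracting $R$ gives a $T^+$-minor. So inside each such piece we are effectively excluding a \emph{rooted} $T$-minor, where rooted means every branch set meets $A$.

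The inductive statement will be the following. Given connected $C$ with an attachment set $A\subseteq V(C)$ that is ``$T$-rooted-minor-free'', we produce a tree-decomposition of a quotient $C/\mathcal P$. Here $\mathcal P$ is a partition of $V(C)$ into parts of size at most $(2t-1)d$. The decomposition has width at most $4h-1$, and the parts containing $A$ lie in one root bag of size bounded by something like $2h$.

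To construct it, we take BFS layers $L_0=A', L_1,\dots$ from the attachment set, and inductively on the radius $h$ of $T$ we split $T$ at its centre $c$. The subtrees $T_1,\dots,T_{d}$ of $T-c$ have radius at most $h-1$. We then proceed in two cases.
\begin{enumerate}
\item If a component $D$ of $C-N[A]$ admits rooted models of all $T_i$ within its own attachment set $N(D)\cap N(A)$, then, using disjointness arguments as in \cref{badtree}, we assemble a rooted $T$-minor. This is a contradiction, since $c$'s branch set absorbs a path through $A$ and stays adjacent to the apex.
\item Otherwise, each component misses some $T_i$, and we recurse with radius $h-1$.
\end{enumerate}

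The part sizes come from the following step: whenever a component's attachment set must be split into parts, we use a greedy packing of $d$ disjoint rooted stars or paths. Each obstruction consists of at most $t-1$ vertices, times $d$ branches, and a factor $2$ comes from handling two consecutive layers, giving $(2t-1)d$. Each recursion level adds at most $4$ to the bag size: two layers, each contributing a root part in the parent and the child. Over $h$ levels this gives width $4h-1$. The factor $2$ relative to pathwidth $2h-1$ reflects that the components branch, which forces a tree rather than a path.

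The main obstacle is step (i): turning "each subtree $T_i$ is realizable somewhere" into a single rooted $T$-minor with disjoint branch sets all meeting $A$. In the unrooted pathwidth setting one uses Bienstock et al.-type linkage; here we first try a Menger-type argument. Either there are $d$ vertex-disjoint paths from $A$ to $d$ distinct rooted $T_i$-models in different components, or a separator of size less than $d\cdot(t-1)$ exists. Such a separator becomes one of the bounded-size parts of the partition, which is exactly where the $(2t-1)d$ bound must be justified.
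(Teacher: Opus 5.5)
There is a genuine gap. Your inductive statement cannot hold, and your recursion on $h$ is applied in a region where no rooted exclusion survives.

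\emph{Your invariant is infeasible.} You require all parts meeting the attachment set $A$ to lie in one root bag with about $2h$ parts. But $|A|$ is unbounded. The fan $P_n^+$ is outerplanar, and $K_{1,3}^+$ contains $K_{2,3}$, so the fan is $K_{1,3}^+$-minor-free. Yet for its apex $u$, the set $A=N(u)$ has $n$ vertices, which cannot be covered by $O(h)$ parts of size at most $(2t-1)d$.

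\emph{The rooted exclusion says nothing away from $A$.} The only hypothesis your statement records is that there is no $A$-rooted $T$-model. This carries no information away from $A$; it even holds vacuously when $|A|<t$. A component of $C-N[A]$ contains no vertex of $A$, so no model inside it is $A$-rooted. Such a component is constrained only by $T^+$-minor-freeness itself, with the same radius $h$. There is therefore no rooted exclusion of a radius-$(h-1)$ tree to pass down, and ``recurse with radius $h-1$'' breaks exactly there.

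\emph{The Menger step does not deliver what you need.} Menger's theorem gives disjoint paths, not pairwise disjoint rooted models of the $T_i$. The alternative ``disjoint rooted models or a small hitting set'' is an Erd\H{o}s--P\'osa-type statement that needs extra structure. Your count ``$(t-1)\cdot d\cdot 2$'' is not an argument, and it does not even equal $(2t-1)d$.

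The missing ideas are as follows. Keep the apex $u$ as a singleton part. The right invariant (\cref{technical_for_apexforest}) is that every bag containing $\{u\}$ has at most $2h+1$ parts, not that $N(u)$ fits in one bag.

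Since $G-u$ has no $S$-rooted $T$-model for $S=N(u)$, the $S$-rooted analogue of the tree theorem (\cref{srooted}) gives a $(2t-1)d$-partition-path-decomposition of $(G-u,S)$ of width $2h-1$. Crucially, it only has to cover an induced subgraph $H\supseteq S$; each remaining component has its neighbourhood inside a single bag. This is the only place where one recurses on $h$, gaining $2$ per level via \cref{metalemmeproduit}. The bound $(2t-1)d$ comes from $\pw(G-r,S)\le 2t-2$ (\cref{diestelSrooted}) combined with sweeping a path-decomposition as in \cref{erdos_posa_forest_general}. That lemma is applied to $T$ minus its root plus an isolated vertex, which has at most $d+1$ components.

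The tree structure comes from a separate induction on $|V(G)|$. Choose $|V(H)|$ minimum. Then for each component $C_i$ of $G-u-V(H)$, the graph $G-V(C_i)$ is connected (\cref{connexite}). Contracting it gives a fresh apex $u_i$ in a smaller $T^+$-minor-free graph, to which induction applies. Replacing $\{u_i\}$ by the bag $B_i$ (at most $2h$ parts) inside bags with at most $2h+1$ parts yields bags with at most $4h$ parts. So $4h-1$ comes from a single $2h+2h$ gluing that does not accumulate with depth, not from ``$4$ per level over $h$ levels''.
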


A straightforward modification of Proposition~3 in \cite{dujmovic2024excluded} shows that $H$ must have treewidth at least $2h$ in the above theorem, thus the upper bound of $4h-1$ is within a factor $2$ of optimal. 
We suspect that the upper bound of $(2t-1)d$ on the clique size in the blow-up in \cref{apexforest} could be further improved to $O(t)$, as in \cref{tree}, however the proof of the latter result does not seem to be easily adaptable to this setting. 

The paper is organized as follows. 
In \cref{sec:background}, we give the necessary definitions and notation. 
In \cref{sec:excluding_tree}, we prove \cref{tree}.  
Next, in \cref{sec:excluding_apex_tree}, we prove \cref{leafseymour-improved} and \cref{apexforest}. 
Finally, in \cref{sec:open}, we conclude with some open problems.

\section{Preliminaries}\label{sec:background}
We consider simple, finite, undirected graphs.
Given a graph $G$, we denote by $V(G)$ its vertex set, and by $E(G)$ its edge set. 
For $c\in \mathbb N$, we denote by $K_c$ the complete graph on $c$ vertices. 
For $h, d \in \mathbb N$, we denote by $T_{h, d}$ the complete $d$-ary tree of radius $h$.  
For a tree $T$, we denote by $T^+$ the apex-tree obtained by adding a universal vertex to $T$. 

A \defin{rooted tree} is a tree where a vertex is specified to be the root.  The \defin{height} of a rooted tree is the maximum distance between the root and a vertex of the tree. 
A \defin{rooted forest} is a disjoint union of rooted trees.  
A \defin{vertical path} of $F$ is a path contained in some rooted tree $T$ of $F$ having the root of $T$ as one of its endpoints.   

For a graph $G$, and a set $S\subseteq V(G)$, we define the \defin{neighborhood} of $S$ in $G$ as the set of vertices in $V(G) \setminus S$ that are adjacent to at least one vertex in $S$, and we denote it by $N_G(S)$.  If $S$ contains only one vertex $v$, we sometimes write $N_G(v)$ instead of $N_G(\{v\})$. 
We will denote by $\partial_G(S)$ the set of vertices in $S$ that are adjacent to at least one vertex not in $S$. 
For both notations $N_G(S)$ and $\partial_G(S)$, we drop the subscript when the graph is clear from the context. 

For a graph $G$, and set $S\subseteq V(G)$, we denote by $G-S$ the subgraph of $G$ obtained by removing all the vertices of $G$ contained in $S$.  If $S$ contains only a vertex $v$, we simply write $G-v$ instead of $G-\{v\}$.  

A graph $H$ is a \defin{minor} of a graph $G$ if $H$ can be obtained from a subgraph of $G$ by contracting edges.  A graph $G$ is $\mathit{H}$\defin{-minor-free} if $H$ is not a minor of $G$. An $\mathit{H}$\defin{-model} in a graph $G$, also called \defin{a model of} $H$ in $G$, consists of pairwise-disjoint vertex sets $\{W_v \subseteq V(G) \mid v \in V(H)\}$ that each induce a connected subgraph in $G$, and such that for every edge $vw$ of $H$, then there exists $x\in W_v$ and $y\in W_w$ such that $x$ and $y$ are adjacent in $G$.  Note that $H$ is a minor of $G$ if and only if $G$ contains an $H$-model.  

A \defin{tree-decomposition} of a graph $G$ is a collection $\mathcal D:=\{B_x\subseteq V(G) \mid x\in V(T)\}$ of vertex subsets of $G$ called \defin{bags}, where $T$ is a tree, and such that 
\begin{itemize}
    \item for each vertex $v\in V(G)$, the set $\{x \in V(T) \mid v\in B_x\}$ induces a nonempty subtree of $T$, and 
    \item for each edge $vw\in E(G)$, there exists $x\in V(T)$ such that both $v$ and $w$ are contained in the bag $B_x$.
\end{itemize} 
We will say that $T$ is the tree \defin{associated} to this decomposition, and for every vertex $x\in V(T)$, that $B_x$ is the bag \defin{associated} to $x$. 
The \defin{width} of the tree-decomposition is $\max\{|B_x| \mid x\in V(T)\}-1$.  The \defin{treewidth} of a graph $G$, denoted by $\tw(G)$, is the minimal width of a tree-decomposition of $G$.  

A \defin{path-decomposition} of a graph $G$, the \defin{width} of a path-decomposition and the \defin{pathwidth} of a graph $G$, denoted by $\pw(G)$, are defined the same way except that the tree $T$ is required to be a path.  
It will be convenient to denote path-decompositions simply as a sequence $B_1, B_2, \dots, B_m$ of bags.  In this case, we will call $B_1$ the first bag of the decomposition. 

Let $G$ be a graph.  
A \defin{partition} of $V(G)$ is a collection $\mathcal P$ of nonempty subsets of $V(G)$ such that every vertex of $G$ is in exactly one set of the collection. 
Each element of $\mathcal P$ is called a \defin{part}.  The \defin{width} of $\mathcal{P}$ is the maximum size of a part of $\mathcal P$. The \defin{quotient} $G/\mathcal P$ is the graph with vertex set $\mathcal{P}$ and where distinct parts $P,P'\in \mathcal{P}$ are adjacent in $G/\mathcal P$ if and only if there exists a vertex of $P$ which is adjacent to a vertex of $P'$ in $G$. 

The \defin{strong product} of two graphs $G$ and $H$, denoted by $G\boxtimes H$, is the graph with vertex set $V(G\boxtimes H):= V(G) \times V(H)$ and such that two distinct vertices $(v, w)$ and $(v', w')$ in $V(G\boxtimes H)$ are adjacent if and only if the following two conditions are verified:
\begin{enumerate}
    \item $v=v'$, or $v$ and $v'$ are adjacent in $G$
    \item $w=w'$, or $w$ and $w'$ are adjacent in $H$.
\end{enumerate}

We will use the following easy observation throughout the rest of the paper (often implicitly):  

\begin{prop}
    Given two graphs $G, H$ and a positive integer $c$, the graph $G$ is contained in $H\boxtimes K_c$ if and only if there exists a partition $\mathcal P$ of $V(G)$ of width at most $c$ such that $G/\mathcal P$ is isomorphic to a subgraph of $H$.  
\end{prop}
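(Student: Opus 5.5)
We prove the two implications separately, working directly from the definitions of strong product and quotient.

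For the forward direction, suppose $\phi\colon V(G)\to V(H)\times V(K_c)$ is an injective map such that adjacent vertices of $G$ are mapped to adjacent vertices of $H\boxtimes K_c$. Let $\pi\colon V(H)\times V(K_c)\to V(H)$ be the projection onto the first coordinate. For every $x\in V(H)$ with $(\pi\circ\phi)^{-1}(x)\neq\emptyset$, define the part $P_x:=(\pi\circ\phi)^{-1}(x)$, and let $\mathcal P$ be the collection of these parts.
\begin{enumerate}
\item $\mathcal P$ is a partition of $V(G)$, since each vertex lies in exactly one fiber.
\item Each part has size at most $c$, because $\phi$ is injective and the fiber $\{x\}\times V(K_c)$ has exactly $c$ elements.
\item Consider the map $P_x\mapsto x$. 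It is injective by construction. It sends edges of $G/\mathcal P$ to edges of $H$: if distinct parts $P_x,P_y$ are adjacent, there are adjacent vertices $u\in P_x$ and $v\in P_y$. Then $\phi(u)$ and $\phi(v)$ are adjacent in the strong product, and their first coordinates $x\neq y$ differ, so condition (i) of the definition forces $xy\in E(H)$.
\end{enumerate}
Hence $G/\mathcal P$ is isomorphic to a subgraph of $H$.

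For the converse, let $\mathcal P$ be a partition of width at most $c$, and let $\psi\colon \mathcal P\to V(H)$ be an injective map sending edges of $G/\mathcal P$ to edges of $H$. Within each part $P$, fix an injection $\iota_P\colon P\to V(K_c)$, which exists since $|P|\le c$. Define $\phi(u):=(\psi(P_u),\iota_{P_u}(u))$, where $P_u$ denotes the part containing $u$.
\begin{enumerate}
\item $\phi$ is injective: vertices in different parts have different first coordinates because $\psi$ is injective, and vertices in the same part have different second coordinates because $\iota_P$ is injective.
\item $\phi$ preserves edges. Let $uv\in E(G)$. If $P_u=P_v$, the first coordinates are equal and the second coordinates are distinct, which is allowed since $K_c$ is complete. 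If $P_u\neq P_v$, then $P_uP_v$ is an edge of $G/\mathcal P$, so $\psi(P_u)\psi(P_v)\in E(H)$. Condition (ii) holds in either case because $K_c$ is complete.
\end{enumerate}
Hence $\phi$ embeds $G$ as a subgraph of $H\boxtimes K_c$.

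This argument has no real obstacle. The only points requiring care are checking injectivity in each direction and noting that the completeness of $K_c$ makes the second-coordinate condition automatic.
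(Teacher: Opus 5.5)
Your proof is correct and is the standard argument: in one direction you take the fibres of the projection onto $V(H)$ as the parts, and in the other you label each part injectively by vertices of $K_c$. The paper states this proposition as an ``easy observation'' without proof, and your argument supplies the omitted verification, handling both injectivity checks and the use of the completeness of $K_c$ in the second coordinate properly.
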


\section{Excluding a tree} \label{sec:excluding_tree}
In this section we prove \cref{tree}, which we restate here for convenience. 

\restatetree* 

Let us recall that the statement of this result is the same as that of \cref{badtree} proved in~\cite{dujmovic2024excluded}, except the clique size in the blow-up is decreased from $(d+h-2)(t-1)$ to $t-2$. In \cite{dujmovic2024excluded}, the authors derive \cref{badtree} as a consequence of the following lemma (Lemma~8 in \cite{dujmovic2024excluded}). 

\begin{lemma}[\citet{dujmovic2024excluded}]
\label{lemmearticle}
    For every $h, d\in \mathbb N$ with $h+d\geq 3$, 
    for every $T_{h, d}$-minor-free graph $G$, for every vertex $r$ in $V(G)$, the graph $G$ has a partition $\mathcal P$ of width at most $(d+h-2)(\pw(G)+1)$ such that $\{r\} \in \mathcal P$, and $G/\mathcal{P}$ has a path-decomposition of width at most $2h-1$ such that the first bag contains $\{r\}$.    
\end{lemma}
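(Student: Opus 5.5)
We prove the lemma by induction on $h$, with an inner induction on $|V(G)|$. The idea is to peel off BFS layers from $r$. Fix $p:=\pw(G)$.

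\emph{Base case.} When $h=1$, the tree $T_{1,d}$ is the star $K_{1,d}$, and we have $d\geq 2$. A connected $K_{1,d}$-minor-free graph has fewer than $d$ vertices adjacent to any connected set. We take $\mathcal P=\{\{r\}, V(G)\setminus\{r\}\}$, after restricting to the component of $r$ and treating the other components separately. This gives a quotient with at most two parts, which has pathwidth at most $1=2h-1$. The remaining task in this case is to check that $|V(G)|-1\leq (d-1)(p+1)$; this should follow from a bound on the number of vertices of a connected graph with no $K_{1,d}$ minor. Any degenerate cases with $d=1$ are handled directly.

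\emph{Inductive step.} Let $G$ be connected, and let $N:=N_G(r)$. Consider the components $C_1,\dots,C_k$ of $G-r-N$. Each $C_i$ attaches to $N(C_i)\subseteq N$. The plan is to contract the connected set $\{r\}\cup N$, or better the set $C_i\cup N(C_i)$ seen from $C_i$, into a single new root $r_i$. This yields a graph $G_i$, which is a minor of $G$ and hence $T_{h,d}$-minor-free, with $\pw(G_i)\leq p$. Now use the following observation. If some $C_i$ together with its attachments contained a $T_{h-1,d}$-model rooted so that the root branch set touches $N$, then together with $r$ and the other branches (using $d$ of them) we would build $T_{h,d}$. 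So after grouping, each piece is $T_{h-1,d}$-minor-free with respect to the appropriate root, and induction on $h$ applies to $G_i$ with root $r_i$. This gives partitions $\mathcal P_i$ and path-decompositions of width $2h-3$ whose first bag contains the part of $r_i$. We then build $\mathcal P$ from three ingredients: the part $\{r\}$, a partition of $N$ into parts of size at most $d+h-2$ (roughly, $|N\cap \partial$-relevant$|$ is bounded, using the pathwidth bound together with the fact that at most $d-1$ components $C_i$ can each host a large subtree), and the parts of the $\mathcal P_i$. Concatenating the decompositions of the $G_i$ and adding to every bag the parts covering $N$ and $\{r\}$ yields width at most $(2h-3)+2=2h-1$. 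The two added parts account for the $+2$, and $\{r\}$ is in the first bag.

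\emph{Main obstacle.} The main difficulty is that $N$ may be large. Only those vertices of $N$ that matter must be placed in few parts of size $(d+h-2)(p+1)$. I would handle this by separating the components $C_i$ into two kinds. The \emph{big} ones contain a rooted $T_{h-1,d}$, and there are at most $d-1$ of them. The \emph{small} ones are instead recursed on using $N(C_i)$ as part of the root. Bounding the total size of what must be added to every bag by $(d+h-2)(p+1)$ uses the fact that a graph of pathwidth $p$ has bags of size $p+1$: we choose $N$'s relevant vertices along a path-decomposition, one bag's worth per unit of $d+h-2$. Making this counting precise, which amounts to charging $d-1$ for the big branches and $h-1$ for the depth recursion, is where I expect the real work to lie.
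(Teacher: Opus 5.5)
Your proposal has genuine gaps. The first is in the base case. A connected $K_{1,d}$-minor-free graph can have arbitrarily many vertices: a long path is $K_{1,3}$-minor-free and has pathwidth $1$. So $\{\{r\},V(G)\setminus\{r\}\}$ cannot have width $(d-1)(p+1)$. What works is the observation you state yourself, applied to BFS layers. The layer $V_i$ at distance $i$ from $r$ is the neighbourhood of the connected set $V_0\cup\dots\cup V_{i-1}$, so $|V_i|\le d-1=d+h-2$. The quotient by the layers is a path, which gives width $1$ with $\{r\}=V_0$ in the first bag.

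The inductive step fails in three places:
\begin{itemize}
\item The claim that a single piece containing an $N$-rooted $T_{h-1,d}$-model yields $T_{h,d}$ is false. $T_{h,d}$ arises only from $d$ \emph{pairwise disjoint} weakly $N$-rooted $T_{h-1,d}$-models in $G-r$, so nothing forces any individual piece to be free of rooted copies.
\item $N$ cannot be packed into the boundedly many parts you add to every bag. Take $K_{1,n}$ with $r$ its centre: it is $T_{2,2}$-minor-free with pathwidth $1$, yet $|N|=n$.
\item Your graphs $G_i$ are minors of $G$, so they are only known to be $T_{h,d}$-minor-free. Induction on $h$ does not apply to them. Induction on $|V(G)|$ only returns width $2h-1$, and adding two more parts then overshoots.
\end{itemize}

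What is missing is the structure behind \cref{metalemme}, in three steps.

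(a) Replace $N$ by a small hitting set $X$. Since $G-r$ has no $d$ pairwise disjoint weakly $N$-rooted $T_{h-1,d}$-models, sweep a path-decomposition of $G-r$ of width at most $p$, as in the proof of \cref{erdos_posa_forest_general}. Take the shortest prefix of bags containing such a model, put its last bag into $X$, discard the prefix, and repeat. This yields $X$ with $|X|\le (d-1)(p+1)$ such that $G-r-X$ has no such model. Choosing $X$ minimal, each vertex of $X$ is joined to $r$ by a path internally disjoint from $X$.

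(b) Convert rooted-freeness into minor-freeness by shifting parameters. Suppose a component $C$ of $G-r-X$ containing some $v\in N$ had a $T_{h-1,d+1}$-model. Take a shortest path from $v$ to the model. Merge it into the root branch set, together with the branch sets on the tree path from the vertex it hits up to the root. This re-roots the model at $v$ at the cost of one child subtree of the root, giving a weakly $N$-rooted $T_{h-1,d}$-model in $C$, a contradiction. So $C$ is $T_{h-1,d+1}$-minor-free, and induction at $(h-1,d+1)$ gives width $2h-3$ with the same constant $(d+1)+(h-1)-2=d+h-2$. This, not a charging argument, is where the term $h-1$ comes from. It is also why the hypothesis reads $h+d\geq 3$.

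(c) Assemble the decomposition.
\begin{itemize}
\item The first bag is $\{\{r\},X\}$, where $X$ is a single part of size at most $(d-1)(p+1)\le (d+h-2)(p+1)$.
\item Next come the decompositions of the components of $G-r-X$ meeting $N$, with $\{r\}$ and $X$ added to every bag. This gives width $(2h-3)+2$.
\item The remaining components of $G-r-X$ see only $X$. Handle them by contracting $r$ together with the paths from (a) into a new root $r'$ and applying induction on $|V(G)|$; pathwidth does not increase under minors. This gives width $2h-1$ with $\{r'\}$ in the first bag. Then replace $\{r'\}$ by $X$ and append the result.
\end{itemize}
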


To see that \cref{lemmearticle} implies \cref{badtree}, observe that if $G$ excludes a $t$-vertex tree $T$ of radius $h$ and maximum degree $d$ as a minor then in particular $G$ excludes $T_{h, d}$ as a minor, and furthermore $\pw(G) \leq t-2$ by \cref{bienstock}. 

The heart of the proof of \cref{lemmearticle} in \cite{dujmovic2024excluded} is the following technical lemma, which is not stated explicitly in \cite{dujmovic2024excluded} but follows from their proofs. 
For completeness, we note that this lemma can also be derived as a special case of \cref{metalemmeproduit} (with $R=V(G)$)  introduced later in this paper. 

\begin{lemma}[\citet{dujmovic2024excluded}, implicit]
    \label{metalemme}
    Let $G$ be a graph, let $x, y\in \mathbb N$, and let $r\in V(G)$.  Let $R:=N_G(r)$.  Suppose that there exists a set $X\subseteq V(G) \backslash \{r\}$ of size at most $x$ satisfying the following conditions: 
    \begin{enumerate}
        \item For each vertex $v\in X$, there exists a path in $G$ from $r$ to $v$ internally disjoint from $X$.
        \item For each connected component $C$ of $G-X-r$ containing a vertex of $R$, there exists a partition $\mathcal P_C$ of $V(C)$ of width at most $x$ such that $\pw(C/\mathcal{P}_C)\leq y$.
        \item If $G'$ is a minor of $G$ with $|V(G')|<|V(G)|$ and there exists a $G'$-model in $G$ such that $r$ is contained in the branch set of a 
        vertex $r'$ of $G'$, then there exist a partition $\mathcal{P}_{G'}$ of $V(G')$ of width at most $x$ such that $\{r'\}\in \mathcal{P}_{G'}$, and a path-decomposition $\mathcal{D}'$ of $G'/\mathcal P_{G'}$ of width at most $y+2$ such that $\{r'\}$ is in the first bag of $\mathcal{D}'$. 
    \end{enumerate}
    Then there exist a partition $\mathcal P$ of $V(G)$ of width at most $x$ such that $\{r\} \in \mathcal P$, and a path-decomposition $\mathcal D$ of $G/\mathcal P$ of width at most $y+2$ such that $\{r\}$ is in the first bag of $\mathcal D$. 
\end{lemma}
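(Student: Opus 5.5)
The plan is to make $X$ a single part, place it together with $\{r\}$ in every bag of a path-decomposition built component by component, and use hypothesis (iii) on smaller minors for the components of $G-X-r$ that avoid $R$. We may assume $G$ is connected. Otherwise, the components of $G$ not containing $r$ are handled by (iii) applied to a minor consisting of such a component plus $r$ as an isolated vertex. This minor is smaller than $G$, and its decomposition can simply be appended at the end.

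Let $\mathcal C_1$ be the set of components of $G-X-r$ containing a vertex of $R$, and $\mathcal C_2$ the set of the other components. Since $R=N_G(r)$, every vertex of $R$ lies in $X$ or in a component of $\mathcal C_1$. For every $D\in\mathcal C_2$ we have $N_G(D)\subseteq X$, because $D$ contains no neighbour of $r$. The global partition $\mathcal P$ will consist of $\{r\}$, $X$ (if nonempty), the parts of $\mathcal P_C$ for $C\in\mathcal C_1$ given by (ii), and parts obtained from (iii) for each $D\in\mathcal C_2$. All parts have size at most $x$.

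For $D\in\mathcal C_2$, consider the set
\[
W:=\{r\}\cup X\cup\bigcup_{C\in\mathcal C_1}V(C).
\]
We claim $G[W]$ is connected. Each $C\in\mathcal C_1$ is connected and contains a neighbour of $r$. By (i), each $v\in X$ is joined to $r$ by a path whose internal vertices avoid $X$; these internal vertices therefore lie in $\{r\}$ or in components of $G-X-r$ attached to $r$, that is, in components of $\mathcal C_1$. Let $G'_D$ be the minor with branch sets $W$ (giving a vertex $r'$) and the singletons of $D$. Thus $G'_D$ is $G[D]$ plus a vertex $r'$ adjacent to $N_G(D)\cap D$-neighbours of $X$. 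Since $G$ is connected and $D$ exists, $X\neq\emptyset$, and in particular $|V(G'_D)|<|V(G)|$ because $|W|\geq 2$ (or other components of $\mathcal C_2$ are removed). Hypothesis (iii) gives a partition $\mathcal P_D$ of $V(G'_D)$ with $\{r'\}\in\mathcal P_D$, and a path-decomposition $\mathcal D_D$ of width at most $y+2$ with $\{r'\}$ in its first bag.

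The path-decomposition of $G/\mathcal P$ is built by concatenation:
\begin{itemize}
\item First, the bag $\{\{r\},X\}$.
\item Next, for each $C\in\mathcal C_1$ in turn, the bags of a width-$y$ path-decomposition of $C/\mathcal P_C$, each augmented with $\{r\}$ and $X$. This gives width at most $y+2$.
\item Finally, for each $D\in\mathcal C_2$, the bags of $\mathcal D_D$ with $\{r'\}$ replaced by the part $X$. This does not increase the width.
\end{itemize}

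We then verify the two conditions of a path-decomposition. The part $X$ is in every bag. The part $\{r\}$ occupies a contiguous prefix of the sequence. Each other part occurs in only one block, which is contiguous by the corresponding decomposition. For the edges of $G/\mathcal P$, there are no edges between distinct components of $G-X-r$. Edges inside a component are covered by its decomposition. Edges from $C\in\mathcal C_1$ to $r$ or $X$ are covered since these parts are in every bag of that block. An edge of $r$ lies within $R\subseteq X\cup\bigcup\mathcal C_1$, so it is covered in the first two blocks. Edges from $D$ to $X$ correspond exactly to edges to $r'$ in $G'_D$, and are covered after the substitution.

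The main obstacle I expect is the bookkeeping around hypothesis (iii). One must check that $W$ is really connected, which is where (i) and the definition of $\mathcal C_1$ are used. One must also check that $G'_D$ is strictly smaller than $G$ and that edges to $r'$ translate exactly into edges to the part $X$. Degenerate cases, such as $X=\emptyset$ or $G$ disconnected, need to be handled separately so that the strict-size condition of (iii) holds.
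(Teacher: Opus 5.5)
\textbf{The gap: the part $X$ is not contiguous in your decomposition.} Hypothesis (iii) only puts $\{r'\}$ in the \emph{first} bag of $\mathcal D_D$. So after you substitute $X$ for $\{r'\}$, the part $X$ occupies only a (possibly proper) prefix of the block coming from $\mathcal D_D$, and your claim that ``the part $X$ is in every bag'' is false. Suppose $\mathcal C_2$ contains two components $D_1,D_2$. Then the bags containing $X$ are the first two blocks, a possibly proper prefix of the $D_1$-block, and then again a prefix of the $D_2$-block. This is not an interval, so the concatenation is not a path-decomposition. Nor is it cosmetic: adding $X$ to every bag of the $\mathcal D_D$-blocks can raise the width to $y+3$.

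\textbf{The fix.} Invoke (iii) only once, which is what the paper does when it derives this lemma from \cref{metalemmeproduit}. Let $Z$ be the union of the vertex sets of all components in $\mathcal C_2$. Take the single minor with branch set $W$ for $r'$ together with the singletons of $Z$; the paper instead contracts only the union of the paths from (i) and deletes the rest, which works equally well. This minor has $|Z|+1<|V(G)|$ vertices because $|W|\ge 2$. Its decomposition forms one final block in which $X$ occupies a prefix, so $X$ is contiguous. With this change your argument coincides with the paper's.

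\textbf{A secondary issue: the reduction to connected $G$.} It fails when $r$ is isolated and $G$ has exactly one other component $K$: then ``$K$ plus $r$'' is $G$ itself, so (iii) is unavailable. No argument can repair this, because the statement is false there. Take $G$ to be the disjoint union of $\{r\}$ and $K_4$, with $x=1$, $y=0$ and $X=\emptyset$.
\begin{itemize}
\item Conditions (i) and (ii) are vacuous.
\item Condition (iii) holds: every minor in question is an isolated $r'$ plus at most three vertices. With the partition into singletons, it has a path-decomposition of width at most $2$ whose first bag is $\{\{r'\}\}$.
\item Yet parts of size at most $1$ force $G/\mathcal P\cong G$, which has pathwidth $3>y+2$.
\end{itemize}
So $G$ must be assumed connected (and $x\ge 1$, since $\{r\}$ must be a part), as in \cref{metalemmeproduit}. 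In every other disconnected case your reduction is fine.
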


In order to prove \cref{tree}, we will reuse the ideas from \cite{dujmovic2024excluded}, and in particular \cref{metalemme}, in combination with the following lemma, which is a slight modification of a recent result in \cite{dujmovic2025tight}. 
To state this lemma, we need the following terminology: Given a rooted forest $F$, a graph $G$, and a set $R$ of vertices of $G$, we say that an $F$-model in $G$ is \defin{weakly} $\mathit{R}$-\defin{rooted} if the branch set of each of the roots of the trees of $F$ contains at least one vertex of $R$. To keep notations light, it will be convenient to also allow $R$ to contain vertices not in $G$ in this definition, and thus we do so. (This allows us for instance to write ``weakly $R$-rooted'' instead of ``weakly $(R\setminus X)$-rooted'' in the following lemma.) 

\begin{lemma}
\label{newbound}
    Let $G$ be a graph, let $R\subseteq V(G)$, and let $F$ be a rooted forest.  If there is no weakly $R$-rooted model of $F$ in $G$, then there exists a connected component $T$ of $F$ and a set $X\subseteq V(G)$ with $|X|< |V(F)|$ such that $G-X$ does not contain any weakly $R$-rooted model of $T$.    
\end{lemma}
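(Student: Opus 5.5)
Our plan is to prove the statement by induction on the number of components of $F$, and, for a fixed number of components, on $|V(F)|$. The key feature is that we are allowed to choose \emph{which} component $T$ survives the deletion of $X$.

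\textbf{Base case.} If $F$ has a single component $T$, we take $X=\emptyset$. Then $G-X=G$ has no weakly $R$-rooted model of $T=F$ by hypothesis. (For $F$ empty the statement is vacuous.)

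\textbf{Inductive step.} Let $F$ have components $T_1,\dots,T_k$ with $k\ge 2$. We distinguish two cases.

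First case: some $T_i$ has no weakly $R$-rooted model in $G$. Then we take $X=\emptyset$ and $T=T_i$.

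Second case: every $T_i$ has a weakly $R$-rooted model in $G$. Then we pick one component, say $T_1$, and choose a weakly $R$-rooted model of $T_1$ in $G$ whose union of branch sets $Y$ is minimal with respect to some well-chosen measure. Since $F$ has no weakly $R$-rooted model in $G$, the forest $F-T_1$ has no weakly $R$-rooted model in $G-Y$. By induction, there exist a component $T_j$ of $F-T_1$ and a set $X'$ with $|X'|<|V(F)|-|V(T_1)|$ such that $G-Y-X'$ has no weakly $R$-rooted model of $T_j$.

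The issue is that $Y$ may be large, so we cannot simply add $Y$ to $X'$. To get around this, we intend to replace $Y$ by a small set $Z$ of at most $|V(T_1)|$ vertices, for instance one vertex per branch set. Taking $X=X'\cup Z$ would then give $|X|<|V(F)|$, as required.

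For this replacement to work, we need the following property: whenever $G-X'-Z$ contains a weakly $R$-rooted model of $T_j$ that meets $Y$, we can reroute it, or alternatively use it together with pieces of $Y$, so as to obtain either a $T_j$-model avoiding $Y$ or a weakly $R$-rooted model of $T_1\cup T_j$ in $G-X'$. The second outcome would have to be combined with the remaining components to contradict the hypothesis.

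\textbf{An alternative formulation.} A cleaner route is to strengthen the induction so that it mirrors the Menger-type proof in \cite{dujmovic2025tight}. Concretely, we would process $F$ vertex by vertex, in a BFS order from the roots. We would maintain a partial model in which each branch set is a minimal connected set, and charge one vertex of $X$ to each vertex of $F$ whose branch set cannot be extended. When the extension fails at a vertex $u$ belonging to component $T$, minimality together with a Menger-type separation should yield a set of fewer than $|V(F)|$ vertices that separates $R$, or the required attachments, from what $T$ needs. Since the original result in \cite{dujmovic2025tight} presumably handles rooted models whose roots lie in $R$, the change needed here is to relax that requirement to branch sets merely meeting $R$.

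\textbf{Main obstacle.} We expect the main difficulty to be the exchange argument: showing that the remaining components, after losing the vertices they shared with the model of $T_1$, can be rerouted around a bounded-size hitting set rather than around all of $Y$. Our first attempt at this step will be to adapt the proof in \cite{dujmovic2025tight} directly, replacing ``rooted at a vertex of $R$'' by ``branch set of the root meets $R$'' throughout. We would then verify that each step of that argument — contracting minimal connected sets and using Menger's theorem between $R$ and the current partial model — remains valid under this weaker rooting condition.
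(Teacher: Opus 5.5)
Your proposal has a genuine gap at exactly the step you call the main obstacle, and nothing in it closes that gap. You fix $T_1$, take a model of it with union $Y$, and find $X'$ for $F-V(T_1)$ in $G-Y$. You then need a set $Z$ of at most $|V(T_1)|$ vertices such that $G-X'-Z$ has no weakly $R$-rooted $T_j$-model. Deleting one vertex per branch set leaves $Y\setminus Z$ available, and a $T_j$-model may run through it. You give no measure and no rerouting argument that prevents this. Fixing $T_1$ in advance is part of the difficulty: the region around the $T_1$-model may itself host a model of the component that is supposed to survive. The ``alternative formulation'' is only a hope: a Menger separation ``should yield'' the set, but no argument is given. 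It also misdescribes the source. The lemma in \cite{dujmovic2025tight} is about unrooted models, and its proof (reproduced in the paper as the proof of \cref{ErdosPosaForest}) is a pathwidth argument, not a BFS-extension/Menger argument.

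The missing idea is to obtain the small separator from a bounded-width path-decomposition, and to let the decomposition, not you, choose which component to remove.
\begin{itemize}
\item Set-up: discard the components of $G$ disjoint from $R$, and let $t_m$ be the minimum size of a component of $F$. Assume every component of $F$ has a weakly $R$-rooted model in $G$.
\item Choice of $Y$: if $\pw(G)<t_m-1$, take $Y=V(G)$. Otherwise apply \cref{Diestel} to a component of $G$ of pathwidth at least $t_m-1$, with a root in $R$. In both cases $G[Y]$ contains a weakly $R$-rooted model of some component of $F$, and has a path-decomposition $(B_1,\dots,B_q)$ with bags of size at most $t_m$ and $\partial_G(Y)\subseteq B_q$.
\item Choice of $\ell$: let $\ell$ be minimal such that $G_\ell:=G[B_1\cup\dots\cup B_\ell]$ contains a weakly $R$-rooted model of some component $T_{i'}$. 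Then there is no edge between $G_\ell-B_\ell$ and $G-V(G_\ell)$. By minimality of $\ell$, $G_\ell-B_\ell$ contains no weakly $R$-rooted model of any component of $F$.
\item Recursion: $G-V(G_\ell)$ has no weakly $R$-rooted model of $F-V(T_{i'})$, so induction gives a component $T_z$ and a set $X'$. Put $X:=X'\cup B_\ell$. A tree model has connected union, so a weakly $R$-rooted $T_z$-model in $G-X$ would lie in $G_\ell-B_\ell$ or in $G-V(G_\ell)-X'$. Both are impossible.
\item Counting: $|B_\ell|\le t_m\le |V(T_{i'})|$, so $|X|<|V(F)|-|V(T_{i'})|+|V(T_{i'})|=|V(F)|$.
\end{itemize}
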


Before proving \cref{newbound}, let us we show that it implies the following lemma, which directly implies \cref{tree}, by applying it with an arbitrary vertex $r$ on every connected component of $G$, and choosing the root of $T$ in such a way that the height of $T$ is the radius of $T$.

\begin{lemma} \label{lemmearticlevariante} 
    For every rooted tree $T$ with $t\geq 3$ vertices and height $h\geq 1$, for every connected graph $G$, for every $r \in  V(G)$ such that $G$ does not contain any weakly $\{r\}$-rooted model of $T$, there is a partition $\mathcal P$ of $V(G)$ of width at most $t-2$ such that $\{r\} \in \mathcal P$, and $G/\mathcal{P}$ has a path-decomposition of width at most $2h-1$ such that the first bag contains $\{r\}$.    
\end{lemma}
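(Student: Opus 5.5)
We argue by induction on $h$ and, for fixed $h$, on $|V(G)|$, applying \cref{metalemme} with $x := t-2$ and $y := 2h-3$, so that $y+2 = 2h-1$. Let $r_T$ be the root of $T$ and $T_1,\dots,T_k$ the subtrees of $T-r_T$, each rooted at a child of $r_T$. Let $F$ be the rooted forest $T_1\cup\dots\cup T_k$, so $|V(F)| = t-1$, and set $R := N_G(r)$.

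\emph{Finding $X$.} The graph $G-r$ has no weakly $R$-rooted model of $F$. Otherwise, adding $\{r\}$ as the branch set of $r_T$ would give a weakly $\{r\}$-rooted model of $T$ in $G$, since each root branch set of $F$ contains a neighbour of $r$. By \cref{newbound}, there are a component $T_i$ and a set $X\subseteq V(G)\setminus\{r\}$ with $|X|\le t-2$ such that $G-r-X$ has no weakly $R$-rooted model of $T_i$. To get condition (i) of \cref{metalemme}, shrink $X$: keep only the vertices of $X$ that are reachable from $r$ by a path internally disjoint from $X$. The removed vertices lie in components of $G-r-X'$ not touching $R$, where $X'$ is the shrunk set, so the property of $X$ is preserved. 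Indeed, a model of $T_i$ meeting $R$ in its root branch set lives, after connectivity considerations, in components containing a vertex of $R$. This step needs a short verification.

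\emph{Condition (ii).} Let $C$ be a component of $G-X-r$ containing a vertex $u\in R$. Contract $r$ together with everything outside $C$ that is connected appropriately; more simply, form $C^+$ from $C$ by adding a new vertex $r_C$ adjacent to $V(C)\cap R$. Then $C^+$ has no weakly $\{r_C\}$-rooted model of the rooted tree $T_i' := T_i$ plus a new root adjacent to the root of $T_i$. This tree has height at most $h$, which does not decrease $h$. Instead we use $T_i$ directly: since $T_i$ has height at most $h-1$, we want a partition of $C$ of width at most $t-2$ whose quotient has pathwidth at most $2h-3$. For this, we apply induction on $h$ to $C^+$ rooted at $r_C$ with the tree $T_i'$, which has at most $t-1$ vertices and height at most $h$. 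That does not reduce $h$, so instead we handle it as follows. Pick $u$, and note that $C$ has no weakly $(R\cap V(C))$-rooted model of $T_i$. For each vertex $v\in R\cap C$, the connected graph $C$ has no weakly $\{v\}$-rooted model of $T_i$, so induction on the height (with $h-1$) gives a partition of width at most $t-2$ and a path-decomposition of width at most $2h-3$. Strictly, one vertex of $R$ suffices: apply the induction hypothesis to $C$ with root $u$ and tree $T_i$ of height at most $h-1$, and drop the rooted condition. The base cases, where $T_i$ is a single vertex or an edge (so that $|V(T_i)|\le 2$ or height $0$), are handled directly. If $T_i$ is a single vertex, then $C$ has no vertex of $R$, which is a contradiction, so no such $C$ exists. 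If $T_i$ has height $0$ as well, similar reasoning applies. When $|V(T_i)|<3$, we use the trivial partition into singletons of a graph with at most one vertex or a matching-like structure; the bound $t-2\ge1$ suffices here.

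\emph{Condition (iii) and the main obstacle.} For (iii), any proper minor $G'$ of $G$ with $r$ in the branch set of $r'$ still has no weakly $\{r'\}$-rooted $T$-model, since a model in $G'$ lifts to $G$ with $r$ in the root branch set. If $G'$ is connected, induction on $|V(G)|$ applies directly. If not, we treat the component of $r'$ by induction and place each other component (by the $t$-vertex Theorem \cref{bienstock} and the same lemma with arbitrary roots) in consecutive path-decompositions appended afterwards. \cref{metalemme} then yields the statement. The main obstacle I expect is the first step: making \cref{newbound}'s output compatible with condition (i) and ensuring the component-wise application of induction, with height dropping to $h-1$ and the width staying $t-2$ rather than $|V(T_i)|-2$, is legitimate in the small base cases where $|V(T_i)|\le2$.
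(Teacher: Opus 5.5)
Your skeleton is the paper's: induction on $(h,|V(G)|)$, \cref{newbound} applied to $G-r$ with $F=T-r_T$ and $R=N_G(r)$, then \cref{metalemme} with $x=t-2$, $y=2h-3$, using induction on $h$ for condition (ii) and on $|V(G)|$ for condition (iii).

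You obtain condition (i) differently from the paper. You keep only the vertices of $X$ reachable from $r$ by a path internally disjoint from $X$, whereas the paper takes $X$ inclusion-minimal. Your version is valid, and the verification is short. Suppose some $v\in X\setminus X'$ lay in a component of $G-r-X'$ containing $u\in R$. Then the first vertex of $X$ after $r$ on a path $r,u,\dots,v$ in that component would lie in $X\setminus X'$, yet it is reachable from $r$ internally disjoint from $X$, a contradiction. So the components of $G-r-X'$ meeting $R$ are components of $G-r-X$, and a weakly $R$-rooted $T_i$-model, whose union is connected and meets $R$, cannot exist there. Your handling of $|V(T_i)|\le 2$ in (ii) is also fine: $T_i=K_1$ makes (ii) vacuous, and $T_i=K_2$ forces $C=\{u\}$.

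The real gap is the base case $h=1$, which you never treat. The induction on $h$ needs it: at $h=2$, condition (ii) invokes the statement for $T_i$ a star of height $1$. But for $h=1$ your argument applies \cref{metalemme} with $y=2h-3=-1$, which lies outside its hypotheses ($y\in\mathbb N$).

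The paper proves this case directly. Let $V_i$ be the BFS layers from $r$. If some $|V_i|\ge t-1$, the connected set $V_0\cup\dots\cup V_{i-1}$ together with $t-1$ singletons of $V_i$ is a weakly $\{r\}$-rooted model of the star $T$. Hence all layers have size at most $t-2$, and the bags $\{V_i,V_{i+1}\}$ give a path-decomposition of width $1$ of the quotient. Alternatively, you could check that the proof of \cref{metalemme} survives $y=-1$. This works because $T_i=K_1$ forces $R\subseteq X$, so (ii) is vacuous and the only bags are $\{\{r\},X\}$ followed by those of $\mathcal D'$. Either way, this has to be said.

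Your patch for disconnected $G'$ in (iii) does not work. A component $K$ of $G'$ avoiding $r'$ need not be $T$-minor-free. Even if it were, \cref{bienstock} only gives $\pw(K)\le t-2$, not a quotient of pathwidth $2h-1$. Nor is it clear that $K$ has no weakly rooted $T$-model for an arbitrary root.

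The case is in fact unnecessary. The only minor to which (iii) is applied in the proof of \cref{metalemme} is $\{r'\}\cup Z$, and it is connected because every component of $Z$ has a neighbour in $X$. If you want (iii) in full generality, add to $K$ a new vertex standing for a shortest path from $r$ to the branch sets of $K$. This gives a connected minor of $G$ with fewer vertices and no weakly rooted $T$-model at the new vertex, so you can apply induction and then restrict the result to $K$.
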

\begin{proof}[Proof of \cref{lemmearticlevariante}, assuming \cref{newbound}]
    We prove the lemma by induction on $(h, |V(G)|)$, in lexicographic order.  
    The base case $h=1$ is done similarly as in the proof of \cref{badtree} in \cite{dujmovic2024excluded}, we describe it here for completeness.  For every $i\in \mathbb N$, let $V_i := \{v\in V(G) \ | \ \text{dist}_G(r, v)=i \}$.  If $|V_i|\geq t-1$ for some $i\in \mathbb N$, then the set $V_0\cup V_1\cup \ldots \cup V_{i-1}$ (which is connected in $G$) together with $t-1$ vertices of $V_i$ taken as singletons give a weakly $\{r\}$-rooted $T$-model in $G$, a contradiction.    
    Thus, $|V_i|\leq t-2$, for all $i\in \mathbb N$.   Let $m$ be the largest integer such that $V_{m}\ne \emptyset$. Because $G$ is connected, $\{V_i \ | \ i\in \mathbb N, i\leq m \}$ is a partition of $V(G)$ of width at most $t-2$.  Moreover, $V_0=\{r\}$, and it is easy to verify that $\big\{\{V_{i}, V_{i+1} \} \ | \ 0\leq i < m\big\}$ is a path decomposition of $G/ \mathcal P$ of width at most $1=2h-1$, where $\{r\}=V_0$ is in the first bag.  This concludes the base case.   

    Now for the inductive case, assume $h\geq 2$. 
    First, we show that the hypotheses of \cref{metalemme} are fulfilled for $x=t-2$ and $y=2h-3$.  Let $F$ be the rooted forest obtained from $T$ by deleting its root, where for each connected component of $F$, the root of that tree is the only vertex that was a child of the root of $T$. 
    Let $R:=N_G(r)$. 
    Observe that $G-r$ cannot contain a weakly $R$-rooted model of $F$, because otherwise $G$ would contain a weakly $\{r\}$-rooted model of $T$.  
    Applying \cref{newbound} to $G-r$, we obtain that there exists a connected component $T'$ of $F$ and a set $X\subseteq V(G-r)$ of size at most $t-2$ such that $G-(X \cup \{r\})$ has no weakly $R$-rooted $T'$-model.  Without loss of generality, we may choose $X$ to be inclusion-wise minimal with this property.  Notice also that by definition of $F$, the height $h'$ of $T'$ is strictly less than $h$. 

    Let us point out that the set $X$ could be empty, which is fine for our purposes. 
    In case $X$ is not empty, by minimality of $X$, for each vertex $v\in X$ the graph $G-r-(X \setminus \{v\})$ contains a weakly $R$-rooted model of $T'$.  This model has to use $v$, so in particular it contains a path from $v$ to a vertex of $R$, and thus $G-(X \setminus \{v\})$ contains a path from $v$ to $r$.  This path has to be internally disjoint from $X$, so the first condition of \cref{metalemme} is verified. 

    Next, suppose that $C$ is a connected component of $G-(X \cup r)$ containing a vertex $v\in R$.  By definition of $X$, $C$ does not contain any weakly $\{v\}$-rooted model of $T'$.  Thus, since $h'<h$, we may apply induction, and by the induction hypothesis, the second condition of \cref{metalemme} is verified for the set $X$ as well. 
    
    Finally, suppose that $G'$ is a minor of $G$ such that $|V(G')|<|V(G)|$, and such that there exists a model of $G'$ in $G$ and a vertex $r'$ in $G'$  whose branch set in the model contains $r$.  
    If there is a weakly $\{r'\}$-rooted model of $T$ in $G'$, then there is a weakly $\{r\}$-rooted model of $T$ in $G$, a contradiction.  
    So there is no weakly $\{r'\}$-rooted model of $T$ in $G'$, and because $G'$ has less vertices than $G$, we may apply the induction hypothesis, and we deduce that the third condition of \cref{metalemme} is also verified. 

    Therefore, we may apply \cref{metalemme} to $G$ and $X$, which yields the desired result.
\end{proof}

Now we prove \cref{newbound}. We will need the following lemma due to \citet{diestel1995graph}. 
We remark that it is not stated explicitly in \cite{diestel1995graph} but it follows from the main proof in that paper. 

\begin{lemma}[\citet{diestel1995graph}, implicit]
    Let $G$ be a connected graph, let $v\in V(G)$, and let $T$ be a rooted tree on $t>1$ vertices.  If $\pw(G)\geq t-1$, then there exists $Y\subseteq V(G)$ such that $r\in Y$, $G[Y]$ contains a weakly $\{r\}$-rooted $T$-model, and $G[Y]$ has a path-decomposition of width at most $t-1$ whose last bag contains $\partial_G(Y)$.
    \label{Diestel}
\end{lemma}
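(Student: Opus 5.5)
The plan is to follow Diestel's short proof of \cref{bienstock}, building the model of $T$ one vertex at a time along with a path-decomposition of the part of $G$ used so far. (In the statement, $r$ denotes the vertex $v$.) Fix an ordering $s_1=\text{root},s_2,\dots,s_t$ of $V(T)$ in which every vertex comes after its parent, and let $T_k:=T[\{s_1,\dots,s_k\}]$, which is again a rooted tree with the same root. Call $s_i\in V(T_k)$ \emph{unfinished} if some child of $s_i$ in $T$ is not in $T_k$.

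By induction on $k=1,\dots,t$, I will construct a set $Y_k\ni r$, a weakly $\{r\}$-rooted $T_k$-model $(W_{s_i})_{i\le k}$ in $G[Y_k]$ (so $r\in W_{s_1}$), and a path-decomposition of $G[Y_k]$ satisfying the following invariant:
\begin{enumerate}
\item the last bag equals $\partial_G(Y_k)$;
\item every vertex of $\partial_G(Y_k)$ lies in the branch set of some $s_i$, and distinct boundary vertices lie in distinct branch sets, so $|\partial_G(Y_k)|\le k$;
\item every unfinished $s_i$ has a vertex of $\partial_G(Y_k)$ in $W_{s_i}$;
\item all bags have size at most $k+1$ when $k<t$, and at most $t$ when $k=t$.
\end{enumerate}
The start is $Y_1=\{r\}$, $W_{s_1}=\{r\}$, with the single bag $\{r\}$. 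This is valid because $G$ is connected and $\pw(G)\ge t-1\ge 1$, so $G\neq\{r\}$ and $r$ is a boundary vertex.

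For the extension step, let $s_{k+1}$ have parent $s_i$, which is unfinished. By (iii), some $x\in W_{s_i}\cap\partial_G(Y_k)$ has a neighbour $w\notin Y_k$. Set $W_{s_{k+1}}:=\{w\}$ and $Y':=Y_k\cup\{w\}$, and append the bags $\partial_G(Y_k)\cup\{w\}$ and $\partial_G(Y')$. The first new bag has size at most $k+1$. The new model is a weakly $\{r\}$-rooted $T_{k+1}$-model, because the edge $xw$ realises the edge $s_is_{k+1}$. Moreover, $\partial_G(Y')\subseteq\partial_G(Y_k)\cup\{w\}$, so (ii) is preserved.

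The delicate point is (iii). Adding $w$ may absorb the last outside neighbour of some boundary vertex $x_j\in W_{s_j}$ with $s_j$ still unfinished. I would repair this by \emph{growing} branch sets rather than spawning new tree vertices. While an unfinished $s_j$ has no boundary vertex, pick a current boundary vertex $y$ and a neighbour $z\notin Y$ of $y$. The vertex $z$ is added to the branch set owning $y$, and a bag $\partial(Y)\cup\{z\}$ is appended, so bag sizes stay at most $(\#\text{boundary})+1$. For this I would strengthen (iii) to a weaker, amortised form that is easier to keep: the number of boundary vertices is at least the number of unfinished vertices, and each boundary vertex lies in a branch set whose subtree of $T$ is not yet complete, in the spirit of Diestel's original counting argument. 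Alternatively, I would choose the triple $(Y_k,\text{model},\text{decomposition})$ extremally, with $k$ maximum and then $|Y_k|$ maximum, and argue directly that the invariant can always be restored. Ruling out the case where $\partial_G(Y)=\emptyset$ while $T_k\ne T$ is exactly where the hypothesis is used. By connectivity, $\partial_G(Y)=\emptyset$ forces $Y=V(G)$. Then (iv) with $k<t$ would give $\pw(G)\le k\le t-2$, contradicting $\pw(G)\ge t-1$. This is why bag sizes must stay at most $k+1$, i.e.\ width $k$, until the tree is complete.

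At $k=t$, the set $Y:=Y_t$ works. We have $r\in W_{s_1}\subseteq Y$, the subgraph $G[Y]$ contains a weakly $\{r\}$-rooted $T$-model, the decomposition has width at most $t-1$, and its last bag contains $\partial_G(Y)$. The main obstacle I expect is the bookkeeping in the repair step: keeping an unfinished branch set touching the boundary while never exceeding bag size $|\partial(Y)|+1\le k+1$. This is the step where I would reconstruct Diestel's exact invariant.
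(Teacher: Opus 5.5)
Your proposal has a genuine gap at exactly the step you flag, and the repair you sketch cannot work.

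\textbf{Why the greedy repair fails.} Once an unfinished branch set $W_{s_j}$ contains no vertex of $\partial_G(Y)$, all neighbours of $W_{s_j}$ lie in $Y$. Your scheme only ever adds outside vertices, so this is permanent. Adding $z$ to the branch set owning some \emph{other} boundary vertex never gives $W_{s_j}$ a boundary vertex back, so the loop does not restore (iii). This situation is not exotic: the vertex $w$ you attach can be the unique outside neighbour of boundary vertices in two different unfinished branch sets at once. Those branch sets then never regain a boundary vertex under the operations you allow.

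The amortised invariant does not rescue the extension step, because attaching $s_{k+1}$ requires its parent's own branch set to meet the boundary. Maximizing $k$ and then $|Y_k|$ gives no control over such funnels either.

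There is also an off-by-one error. If bags of size $k+1$ are allowed at stage $k=t-1$, then $Y=V(G)$ only yields $\pw(G)\le t-1$, which does not contradict the hypothesis. Every bag built before the final extension must have size at most $t-1$, and your repair bags $\partial_G(Y)\cup\{z\}$ break this.

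\textbf{What is missing.} You need an extremal choice that includes a tightness condition on the separator, combined with Menger's theorem. This is the mechanism of the argument the paper cites; the appendix runs the same scheme for treewidth, with the condition below replacing $\beta(A\cap B)\subseteq B$.

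Call a separation $(A,B)$ \emph{small} if $G[A]$ has a path-decomposition of width at most $t-2$ whose last bag contains $A\cap B$. Consider small separations of order $k\le t-1$ that satisfy two conditions:
\begin{enumerate}
\item $(A,B)$ left-contains a weakly $\{r\}$-rooted $T_k$-model;
\item (tightness) no small $(A',B')$ of order $<k$ has $A\subseteq A'$ and $B'\subseteq B$.
\end{enumerate}
Among these, take one with $|A|-|B|$ maximum. The separation $(\{r\},V(G))$ qualifies, since $G$ is connected and $\pw(G)\ge t-1$.

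\emph{Key claim:} no small $(A',B')\neq(A,B)$ of order $k$ has $A\subseteq A'$ and $B'\subseteq B$. By maximality, such an $(A',B')$ cannot left-contain $T_k$. Hence there are no $k$ disjoint $(A\cap B)$--$(A'\cap B')$ paths, since otherwise you could extend the branch sets along them. Menger then gives a separation $(C,D)$ of order $s<k$ with $A\subseteq C\subseteq A'$ and $D\subseteq B$. Taking $(C,D)$ of minimum order, there are also $s$ disjoint paths from $C\cap D$ to $A'\cap B'$ inside $G[D\cap A']$. Contract each path onto its end in $C\cap D$ inside the path-decomposition of $G[A']$. This does not increase bag sizes, and it shows $(C,D)$ is small, contradicting tightness.

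\emph{Consequences.} Tightness alone implies that every vertex of $A\cap B$ has a neighbour in $B\setminus A$; otherwise delete it from $B$. If $k<t-1$, pick a neighbour $v\in B\setminus A$ of the separator vertex in the parent's branch set. Then $(A\cup\{v\},B)$ is a candidate of order $k+1$: it needs one new bag of size $k+1\le t-1$, and its tightness follows from the key claim. This contradicts maximality. In particular, the event you were trying to repair cannot occur for an optimal choice. So $k=t-1$, and one more such step, adding a bag of size $t$, produces the required $Y$.
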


The following lemma, whose proof depends on \cref{Diestel}, is a slight modification of a result in~\cite{dujmovic2025tight} (stated at the beginning of the proof of Theorem 2 in that paper).  The only difference is that, in~\cite{dujmovic2025tight}, the models are not rooted. We include a proof of the lemma for completeness; we emphasize that it is the same proof as in \cite{dujmovic2025tight}, except for minor changes to make sure that the models we build are rooted in the appropriate way. 

\begin{lemma}
Let $G$ be a graph, let $R\subseteq V(G)$, let $c$ be a positive integer, let $t_1, \ldots, t_c$ be positive integers with $t_1 \leq \ldots \leq t_c$,  let $T_1, \ldots ,T_c$ be rooted trees such that $|V(T_i)|=t_i$ for every $i\in [c]$, let $x_1, \ldots , x_c$ be nonnegative integers at least one of which is nonzero, and let $I:= \{i \in \mathbb N \mid 1\leq i \leq c \text{ and } x_i\geq 1 \}$.  Then, at least one of the two following conditions are satisfied 
    \begin{enumerate}
        \item $G$ contains pairwise vertex-disjoint subgraphs $\{M_{i, j} \ | \ 1\leq i\leq c, 1\leq j\leq x_i\}$ such that for each $i\in [c]$ and each $j\in [x_i]$, $M_{i, j}$ contains a weakly $R$-rooted $T_i$-model. 
        \item There exists $X\subseteq V(G)$ with $|X|\leq (\sum_{i \in I}x_it_i)- t_{\max(I)}$, and $i\in I$ such that $G- X$ does not contain any weakly $R$-rooted $T_i$-model. 
    \end{enumerate}
    \label{ErdosPosaForest}
\end{lemma}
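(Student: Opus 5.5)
We argue by induction on $|V(G)|$, and within a fixed graph by induction on $s:=\sum_{i\in I}x_i$, following the Erd\H{o}s--P\'osa-type argument of \cite{dujmovic2025tight} with every model made weakly $R$-rooted. If $s=1$, so that $I=\{i\}$ with $x_i=1$, the bound in the second alternative is $0$. Either $G$ has a weakly $R$-rooted $T_i$-model, which gives the first alternative, or it has none and $X=\emptyset$ works. We may also assume $G$ is connected. Otherwise we would apply the statement to each component. Models of trees live in one component, so families of disjoint models found in different components can be combined. We would also have to split the multiplicities $x_i$ among the components and combine the hitting sets; this bookkeeping needs care, and the standard trick is to handle it inside the induction on $s$.

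For the main step, let $k:=\max(I)$, so $T_k$ is the largest tree with $t_k$ vertices. We split according to whether $\pw(G)\ge t_k-1$.

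\emph{Case $\pw(G)\geq t_k-1$.} We apply \cref{Diestel} to the tree $T_k$. The lemma is stated with a root vertex, and we take this root to be a vertex of $R$. This is exactly why \cref{Diestel} is phrased in rooted form, and it is the main place where we must check that the argument of \cite{dujmovic2025tight} adapts. We get a set $Y$ such that $G[Y]$ contains a weakly $R$-rooted $T_k$-model, and $G[Y]$ has a path-decomposition of width at most $t_k-1$ whose last bag $B$ contains $\partial_G(Y)$. Hence $|B|\le t_k$. Now apply induction to $G':=G-Y$ with $x_k$ decreased by one and all other $x_i$ unchanged. If $x_k=1$ and $I=\{k\}$, then $G[Y]$ alone already gives the first alternative.

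If induction on $G'$ gives disjoint models, we add the $T_k$-model found in $G[Y]$ and obtain the first alternative. If instead it gives a set $X'$ of size at most $(\sum_{i\in I}x_it_i)-t_k-t_{\max(I')}$ and an index $i$ such that $G'-X'$ has no weakly $R$-rooted $T_i$-model, we set $X:=X'\cup B$. This has size at most $\sum_{i\in I} x_it_i-t_{\max(I')}$, and $t_{\max(I')}\ge$ the required subtraction only when $\max(I')=k$. Otherwise $x_k$ dropped to $0$, and the count must be redone: $x_kt_k-t_k=0$ and $|B|\le t_k$ replace the subtraction of $t_{\max I'}$. Checking this arithmetic is routine but must be done carefully.

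The key claim is then that $G-X$ has no weakly $R$-rooted $T_i$-model. Such a model would have to meet both $Y\setminus B$ and $G'-X'$. Since $B$ separates $Y\setminus B$ from the rest of the graph, and the model is connected, this is impossible. So every model lies entirely in $G'-X'$, where none exists, or entirely in $Y\setminus B$. The second possibility is the obstacle. We would handle it by choosing $Y$ minimal, or by using that the decomposition of $G[Y]$ ends at $B$, so that $G[Y]-B$ has pathwidth $<t_k-1$ and so cannot contain a large tree minor by \cref{bienstock}. This only works for $i=k$; for smaller trees one instead applies the statement inductively to $G[Y]-B$ as well.

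\emph{Case $\pw(G)<t_k-1$.} Here \cref{bienstock} suggests using a path-decomposition of width at most $t_k-2$ directly. We sweep the decomposition to find the first bag $B_j$ such that the prefix contains a weakly $R$-rooted model of some $T_i$, delete $B_j$, which has size at most $t_k-1$, and recurse on the suffix.

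I expect the main obstacle to be the accounting that yields exactly $(\sum x_it_i)-t_{\max(I)}$, together with the rooted version of the separation argument.
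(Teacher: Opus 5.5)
Your proposal has a genuine gap, and it comes from organizing the argument around the \emph{largest} tree $T_k$, $k=\max(I)$.

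\textbf{The count fails.} In your first case, suppose $x_k=1$ and $t_{\max(I')}<t_k$. Then $X=X'\cup B$ can have $(\sum_{i\in I}x_it_i)-t_{\max(I')}$ vertices, which exceeds the target $(\sum_{i\in I}x_it_i)-t_k$. Your ``redone count'' does not change this. For example, with $c=2$, $x_1=x_2=1$ and $t_1<t_2$, the target is $t_1$, but $B$ alone may have $t_2$ vertices. Your second case has the same defect: the first prefix may contain only a model of a small $T_{i'}$, yet you pay up to $t_k-1$ for the deleted bag.

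\textbf{The hitting step fails.} The obstacle you identify, that $G[Y]-B$ may still contain weakly $R$-rooted models of smaller $T_i$, is real, and your suggested fixes do not remove it. \cref{bienstock} only says that excluding $T$ forces pathwidth at most $t-2$, not the converse: a long path has pathwidth $1$ but contains long paths as minors. (Deleting the last bag also need not lower the pathwidth.) Applying the statement separately to $G[Y]-B$ and to $G-Y$ may return different indices. The conclusion, however, needs a single $i$ such that all of $G-X$ has no weakly $R$-rooted $T_i$-model. Your reduction to connected $G$ is also left unjustified, and it is unnecessary.

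\textbf{The missing idea: use the smallest tree and always sweep.} Let $m=\min(I)$ and discard components containing no vertex of $R$. Then either $\pw(G)<t_m-1$ and you take $Y=V(G)$, or you apply \cref{Diestel} to $T_m$ inside a component of pathwidth at least $t_m-1$, rooted at a vertex of $R$. In both cases:
\begin{itemize}
\item $G[Y]$ has a path-decomposition $(B_1,\dots,B_q)$ with bags of size at most $t_m$ and $\partial_G(Y)\subseteq B_q$;
\item $G[Y]$ contains a weakly $R$-rooted model of some $T_i$ with $i\in I$.
\end{itemize}
Take $\ell$ minimal such that $G_\ell=G[B_1\cup\dots\cup B_\ell]$ contains such a model, say of $T_{i'}$. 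Recurse on $G-V(G_\ell)$ (not on $G-Y$) with $x_{i'}$ decreased by one, and set $X=X'\cup B_\ell$.

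The path-decomposition property together with $\partial_G(Y)\subseteq B_q$ shows that $B_\ell$ separates $G_\ell-B_\ell$ from $G-V(G_\ell)$. Minimality of $\ell$ shows that $G_\ell-B_\ell$ contains no model of \emph{any} $T_i$ with $i\in I$. Hence whichever index $z$ the induction returns is also hit in $G-X$.

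The count closes because $|B_\ell|\le t_m\le t_{i'}$. One checks that $t_{\max(I')}+t_{i'}-t_m\ge t_{\max(I)}$, both when $\max(I')=\max(I)$ and when $i'=\max(I)$ drops out of $I'$.
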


\begin{proof}
    We call the tuple $(G, R, c, T_1, \ldots, T_c, x_1, \ldots ,x_c)$ an \defin{instance}.
    
    Let $(G, R, c, T_1, \ldots, T_c, x_1, \ldots, x_c)$ be an instance, and let $m:= \min(I)$. The proof is by induction on ($\sum_{i=1}^c x_i, |V(G)|)$, in lexicographic order.  In the base case, $\sum_{i=1}^c x_i=1$, implying that $x_m=1$ and $x_i=0$ for every $i\in \{1, \ldots, c\} \setminus\{m\}$.  Either there is a weakly $R$-rooted model of $T_m$ in $G$, and the first statement holds with $M_{m, 1}=G$, or there is no $R$-rooted model of $T_m$ is $G$, and then the second statement holds with $X=\emptyset$. 
    
    For the inductive case, assume that $\sum_{i=1}^c x_i\geq 2$, and that the statement holds for strictly smaller values of the sum, or equal values of the sum and strictly smaller values of the number of vertices. 
    If, for every integer $i\in I$, there is no weakly $R$-rooted model of $T_i$ in $G$, then the second statement holds with $X=\emptyset$.  Thus, we may assume that there exists $i\in I$ such that $G$ contains an $R$-rooted model of $T_i$.  If there exists a connected component $C$ of $G$ such that $V(C)\cap R=\emptyset$, then we apply induction on the instance $(G-V(C), R\backslash V(C), c, T_1, \ldots, T_c, x_1, \ldots, x_c)$.  If the first statement holds for this instance, then it holds also for $(G, R, c, T_1, \ldots, T_c, x_1, \ldots, x_c)$.  If the seconds statement holds for $(G-V(C), R\backslash V(C), c, T_1, \ldots, T_c, x_1, \ldots x_c)$, then it also holds for $(G, R, c, T_1, \ldots, T_c, x_1, \ldots x_c)$ with the same set $X$.  Thus we may assume that every connected component of $G$ contains at least a vertex of $R$.
    
    If $\pw(G) < t_m-1$, let $Y:= V(G)$.  Notice that in this case $\partial_G(Y)=\emptyset$.
    If $\pw(G)\geq t_m-1$, then there exists a connected component  $C$ of $G$ such that $\pw(C)\geq t_m-1$. 
    Then we apply \cref{Diestel} on $C, T_m$ and an arbitrary vertex $r\in R\cap V(C)$. 
    In this case, let $Y$ be the resulting subset of vertices.
    
    Notice that, in both cases in the definition of $Y$, the following two properties hold: 
    \begin{itemize}
        \item $G[Y]$ contains a weakly $R$-rooted model of $T_i$ for some $i\in I$, and 
        \item $G[Y]$ has a path-decomposition $(B_1, B_2, \ldots, B_q)$ of width at most $t_m-1$ such that $\partial_G(Y)\subseteq B_q$.  
    \end{itemize}
    (In case $Y=V(G)$, the first property holds because it holds for $G$.) 
    Let $\ell$ be the smallest integer $j$ between $1$ and $q$ such that there exists $i\in I$ such that $G_{\ell}:= G[\bigcup_{j=1}^\ell B_j]$ contains a weakly $R$-rooted 
    model of $T_i$.  Let $i'\in I$ be such that $G_\ell$ contains a weakly $R$-rooted model of $T_{i'}$. 
    
    We claim that there is no edge between any vertex of $G_\ell - B_\ell$ and any vertex of $G-V(G_{\ell})$.  Suppose for a contradiction that there exists an edge $uv\in E(G)$ with $u\in V(G_\ell)\setminus B_\ell$ and $v\in V(G) \setminus V(G_{\ell})$.  First, notice that $u\in \bigcup_{j=1}^{\ell-1}B_j$.  If $v\in Y$, then $v$ appears only in bags of $(B_1, \ldots, B_q)$ with indices strictly bigger than $\ell$.  However, $u$ appears in at least one bag of index strictly smaller than $\ell$.  Then, it follows from the definition of a path-decomposition that $u\in B_{\ell}$, a contradiction.  Thus, $v\not \in Y$, and then $u\in \partial_G(Y)$ and so $u\in B_q$.  Because $u$ appears in $B_q$ as well as in a bag of index strictly smaller than $\ell$, we should have $u\in B_{\ell}$, a contradiction. 
    
    Let $G':= G-V(G_\ell)$.  Let $x'_i:= x_i$ for every $i\in \{1, \ldots, c\} \backslash \{i'\}$, and let $x'_{i'}:= x_{i'}-1$.  Let $I':= \{i\in I \ | \ x'_i\geq 1\}$.  Observe that $I'$ is not empty, since $\sum_{i\in I} x_i \geq 2$. 
    Apply induction to the instance $(G', R\backslash V(G_l), c, T_1, \ldots, T_c, x'_1, \ldots, x'_c)$.  
    If there exists a collection of vertex-disjoint subgraphs $\{M'_{i, j} \ | \ i\in I', 1\leq j \leq x'_i\}$ such that $M'_{i, j}$ contains a weakly $R$-rooted  
    model of $T_i$ for every $i\in I'$ and $j\in \{1, \ldots, x'_i\}$, then setting $M_{i, j}:=M'_{i, j}$ for every $i, j$ such that $1\leq i \leq c$, $i\ne i'$ and $1\leq j\leq x_i$, or $i=i'$ and $1\leq j \leq x_i - 1$ and setting $M_{i', x_{i}}=G_{\ell}$ satisfies the first statement of the lemma. 
    
    Otherwise, there is a set $X'$ of size at most $(\sum_{i\in I'} x'_it_i)-t_{\max(I')}$ and an integer $z\in I'$ such that $G'-X'$ does not contain any weakly $R$-rooted 
    model of $T_z$.  Let $X:= X'\cup B_{\ell}$.  
    We have 
    \[
    |X|=|X'|+|B_{\ell}|\leq \left(\left(\sum_{i\in I'} x'_it_i\right)-t_{\max(I')}\right)+t_m=\left(\sum_{i\in I}x_it_i\right)-(t_{\max(I')}+t_{i'}-t_m).
    \]
    If $\max(I')=\max(I)$, then $t_{\max(I')}+t_{i'}-t_m=t_{\max(I)}+(t_{i'}-t_m)\geq t_{\max(I)}$ because $m=\min(I)$, and thus $i'\geq m$ and so $t_{i'}\geq t_m$. 
    
    If $\max(I')\ne \max(I)$, then $\max(I)=i'$ (and $x_{i'}=1$), and then $t_{\max(I')}+t_{i'}-t_m=t_{\max(I)}+(t_{\max(I')}-t_m)\geq t_{\max(I)}$, because $\max(I')\in I$, thus $\max(I')\geq m$, and thus $t_{\max(I')}\geq t_m$. 
    
    Thus, in both cases, we have $t_{\max(I')}+t_{i'}-t_m \geq t_{\max(I)}$, and hence $|X|\leq \left(\sum_{i\in I}x_i t_i\right) - t_{\max(I)}$. 
    To fulfill the second statement, it suffices to show that $G-X$ does not contain any weakly $R$-rooted model of $T_z$. 
    Because $B_{\ell}\subseteq X$, and since we have already shown that there is no edge in $G$ between any vertex of $G_\ell-B_\ell$ and any vertex of $G-V(G_{\ell})$, it suffices to show that there is no weakly $R$-rooted  model of $T_z$ 
    in $G_{\ell}-B_\ell$ and no weakly $R$-rooted 
    model of $T_z$ in $G-V(G_\ell)-X$. 
    The first property holds because of the definition (in particular, the minimality) of $\ell$.  
    The second property holds because there is no weakly $R$-rooted 
    model of $T_z$ in $G'-X'$.  
    This concludes the proof.
\end{proof}

\cref{newbound} follows from \cref{ErdosPosaForest} by letting $T_1, \dots, T_c$ be the connected components of the forest $F$, and letting $x_1=\cdots=x_c=1$. 
This concludes the proof of \cref{tree}.

\section{Excluding an apex-forest} \label{sec:excluding_apex_tree}
 
Let us define some notions introduced in \cite{hodor2024quickly} (see also \cite{claus2026excludingapexforestfanquickly}) that we will use in this section: Let $G$ and $H$ be graphs and let $S, R \subseteq V(G)$.  An $H$-model in $G$ is said to be $\mathit{S}$\defin{-rooted} if the branch set of each vertex of $H$ contains at least one vertex of $S$.  If $H$ is a rooted forest, an $H$-model in $G$ is said to be $\mathit{(S, R)}$\defin{-rooted} if it is $S$-rooted and weakly $R$-rooted. 
To keep notations light when considering subgraphs, it will be convenient to also allow $S$ and $R$ to contain vertices not in $G$ in this definition, and thus we do so.
A \defin{path-decomposition of} $\mathit{(G, S)}$ consists of an induced subgraph $H$ of $G$ such that $S\subseteq V(H)$ and a path-decomposition $\mathcal{D}$ of $H$ such that for every connected component $C$ of $G-V(H)$, there exists a bag of $\mathcal D$ containing all of $N_{G}(V(C))$.  
The \defin{pathwidth of} $\mathit{(G, S)}$, denoted by $\pw(G, S)$, is the minimum width of a path-decomposition of $(G, S)$.

The authors of \cite{hodor2024quickly} proved the following theorem.  
  
 \begin{theorem}[\citet{hodor2024quickly}]
    For every forest $F$ with at least one vertex, for every graph $G$ and for every $S\subseteq V(G)$, if $G$ has no $S$-rooted model of $F$, then $\pw(G, S)\leq 2|V(F)|-2$.
    \label{diestelSrootedoriginal}
\end{theorem}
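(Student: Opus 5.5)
Since an $S$-rooted model of a tree $T$ with $V(T)\supseteq V(F)$ and $E(T)\supseteq E(F)$ restricts to an $S$-rooted model of $F$, I would first reduce to the case where $F$ is a tree on $t:=|V(F)|$ vertices. Adding edges to $F$ to make it spanning-connected keeps the vertex count, so excluding $F$ excludes every such $T$. The goal is then $\pw(G,S)\le 2t-2$, i.e.\ bags of size at most $2t-1$. I would also reduce to $G$ connected, and to every component meeting $S$: components disjoint from $S$ can be left outside $H$, and their neighbourhood is empty.

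The core is an $S$-rooted analogue of Diestel's argument behind \cref{Diestel}. Fix an ordering $v_1,\dots,v_t$ of $V(T)$ in which every prefix induces a subtree. I would prove, by induction on $k\le t$, a strengthened statement. Either $G$ contains an $S$-rooted model of $T[\{v_1,\dots,v_k\}]$ whose branch sets are ``exposed'' towards the rest of the graph, or there is an induced subgraph $H_k\supseteq S$ together with a path-decomposition of $(G[V(H_k)],S)$ of width at most $2k-2$ whose last bag contains the current frontier. Concretely, the invariant I would maintain is a separation $(A,B)$ of $G$ and a path-decomposition of the part $A$, such that
\begin{itemize}
\item every component hanging off $H$ inside $A$ has its neighbourhood in some bag;
\item the separator $A\cap B$ has size at most $k$;
\item each vertex of $A\cap B$ lies in a distinct branch set of a partial $S$-rooted model of the first $k$ vertices of $T$, with every branch set containing a vertex of $S\cap A$.
\end{itemize}
The extension step goes as follows. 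Given such a state, try to attach the next vertex $v_{k+1}$ (a leaf of the current subtree, adjacent to some $v_j$). Look for a path from the branch set of $v_j$ through $B\setminus A$ to an unused vertex of $S$. If one exists, add it as a new branch set and push the separation forward. If none exists, Menger's theorem / a separation argument gives a small set cutting $S\cap B$ off from the branch set of $v_j$ in $B$. That branch set can then be retired: the frontier shrinks and we append bags.

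The factor $2$ comes from the following. In the unrooted setting each branch set contributes one frontier vertex to the bag. Here, when a branch set is being rerouted so that it still contains an $S$-vertex, a bag may temporarily need to hold both the old frontier vertex and the new attachment vertex of that branch set. This gives at most $2$ vertices per vertex of $T$, minus one for the branch set currently being rerouted, hence $2t-1$ vertices per bag. Components of $G-V(H)$ disjoint from $S$ never need to enter $H$, which is exactly why the relaxed notion $\pw(G,S)$ is needed: vertices far from $S$ cannot be charged to any branch set.

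The main obstacle I expect is the retirement/rerouting step. When no extension path to a fresh $S$-vertex exists, we must argue that the vertices of $B$ not separated from $S$ can be covered by adding bags of bounded size. The remaining parts must be exactly components whose neighbourhoods lie in a single bag. This requires a careful choice of the separation, e.g.\ taking $(A,B)$ with $|A\cap B|$ minimal and $A$ maximal subject to the invariant, as in Diestel's proof. I would first try this extremal choice and verify that it forces the needed separator to consist of frontier vertices plus at most one new vertex per branch set.
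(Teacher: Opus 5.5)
Note first that the paper gives no proof of this theorem: it is quoted from \cite{hodor2024quickly}, so your outline has to be judged on its own. Your preliminary reductions are correct: completing $F$ to a spanning tree $T$ on the same vertex set, and passing to connected $G$ with $S$-free components left outside $H$. A rooted version of Diestel's argument is also a reasonable route. However, what you have written is a plan rather than a proof, and the steps you leave open are exactly where the rooted setting differs from Diestel's. As it stands, the bound $2|V(F)|-2$ is not established.

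Concretely, your own extension step breaks your invariant (one frontier vertex per branch set, $|A\cap B|\le k$). The new branch set is a whole path from $x_j$ to a fresh vertex of $S$, and arbitrarily many of its vertices can have neighbours in $B\setminus A$. To recover a separator of order $k+1$ you would have to re-choose the separation, for instance by Menger, rerouting the model along a linkage. You would also have to show that the region swept between the old and new separators has a decomposition, in the sense of $\pw(\cdot,S)$, with bags of size at most $2t-1$, starting at the old separator and ending at the new one. Neither is done. The ``two frontier vertices per branch set'' count that is meant to produce $2t-1$ is asserted, not derived.

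The retirement step is worse, because it is incompatible with your fixed enumeration. Extension fails exactly when $v_{k+1}$ has to hang off $v_j$. Once the branch set of $v_j$ is retired, the partial model can never grow into a model of $T[\{v_1,\dots,v_{k+1}\}]$, so your induction on $k$ stalls. The dichotomy you propose to prove by induction is also not precise, since ``exposed'' is never defined. You give no mechanism for obtaining a new left-contained $S$-rooted model of a prefix for the smaller separator while keeping the decomposition built so far valid.

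In Diestel-type arguments this situation is excluded by an extremal choice of separation; compare conditions (iv)--(v) and the first two claims in the appendix. That choice forces every separator vertex to have a neighbour beyond the separator, and it makes Menger linkages available whenever the order would otherwise drop. The rooted analogue would need an extremal condition guaranteeing that each separator vertex reaches a fresh vertex of $S$ through $B\setminus A$. You would then need to verify that this condition is compatible with the path-decomposition already built. You flag this as ``the main obstacle'' and propose to ``try'' such a choice, but that choice and its verification are the substance of the proof.
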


Using a slight modification of their proof, one can obtain the following rooted variant of their theorem, which we will use in this section: 

\begin{theorem}
    For every rooted tree $T$ with at least one vertex, for every connected graph $G$ and for every $S, R\subseteq V(G)$ such that $R$ is nonempty,   
    if $G$ has no $(S, R)$-rooted model of $T$, then $\pw(G, S)\leq 2|V(T)|-2$.
    \label{diestelSrooted}
\end{theorem}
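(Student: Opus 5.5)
We follow the proof of \cref{diestelSrootedoriginal} in \cite{hodor2024quickly}, which is itself modelled on Diestel's proof of \cref{bienstock}, and carry the extra rootedness condition through the induction. We induct on $|V(T)|$ and build the $(S,R)$-rooted model of $T$ greedily, one vertex of $T$ at a time, following an ordering $v_1,\dots,v_t$ of $V(T)$. Here $v_1$ is the root of $T$, and each $v_k$ with $k\ge 2$ has its parent among $v_1,\dots,v_{k-1}$. The weakly $R$-rooted condition only concerns the branch set of the root, and since $G$ is connected and $R\neq\emptyset$, we can enforce it at the first step and then forget about it.

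\textbf{First step.} Pick $r\in R$ and a vertex $s\in S$ (if $S=\emptyset$, then $\pw(G,S)=0$ by taking the empty induced subgraph with one empty bag). Let $P$ be a shortest $r$--$s$ path in $G$, and start with the branch set $W_{v_1}=V(P)$. This set is connected, contains a vertex of $R$ and a vertex of $S$. We then run the Hodor et al.\ extension procedure exactly as in their proof. They maintain a partial $S$-rooted model of a subtree $T_k$ of $T$, together with a path-decomposition of $(G[\,\text{explored part}\,],S)$ whose last bag contains the boundary. The bag sizes are controlled by at most two vertices per branch set (a ``contact'' vertex and an $S$-vertex), which is where the bound $2|V(T)|-2$ comes from. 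At each stage, either the current tree can be extended within the unexplored part by a new $S$-rooted branch set adjacent to the branch set of the parent, or the failure yields a separation along which the decomposition extends.

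\textbf{Second step.} Check that the only place the root enters their argument is the initial choice of branch set, and that the width bound is unaffected. In their proof the branch set of each vertex is represented in the bags by a bounded number of vertices, namely its attachment point and one $S$-vertex. Our root branch set is a path from $r$ to $s$, and it can likewise be represented by just its attachment to the rest and by $s$. So the bag count stays at most $2|V(T)|-1$. Connectivity of $G$ guarantees that every component of $G-V(H)$ we create attaches to the explored region, so the neighbourhood condition in the definition of a path-decomposition of $(G,S)$ is met by the last bag, as in the original proof.

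\textbf{Main obstacle.} The difficulty lies in the first phase, before a root branch set meeting both $R$ and $S$ exists. In the original proof the algorithm can backtrack, replacing the current model by a better one when a component offers more room. We must ensure that after any such replacement the new root branch set still meets $R$. We will handle this by never discarding the root branch set: replacements only touch non-root branch sets, or we re-root inside the component containing the old one. If that fails, we fall back on a simpler route. Contract the $r$--$s$ path $P$ to a single vertex $z$ and apply \cref{diestelSrootedoriginal} to a modified graph $G'$ in which $z$ is forced into every root branch set, for instance by attaching a pendant structure of $|V(T)|$ new $S$-vertices. An $S$-rooted model of an enlarged forest in $G'$ then yields an $(S,R)$-rooted model of $T$ in $G$. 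However, this costs extra width, so the first route is preferred.
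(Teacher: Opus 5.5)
There is a genuine gap. The paper itself only asserts that \cref{diestelSrooted} follows from a slight modification of the proof of \cref{diestelSrootedoriginal} in \cite{hodor2024quickly}, so your overall strategy is the intended one. However, the one concrete modification you propose is not valid, and the actual difficulty is left open.

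In the procedure as you describe it, the last bag must contain the boundary of the explored part, and each branch set may contribute only a bounded number of boundary vertices. If the root's branch set is all of $V(P)$ from the start, then $V(P)$ lies in the explored part. Every vertex of $P$ with a neighbour outside the explored part is then on the boundary. A shortest $r$--$s$ path can have arbitrarily many such vertices. For instance, attach an $S$-free grid along $P$ and take $S=\{s\}$. Then $G-V(P)$ has a component adjacent to every vertex of $P$ except $s$, even though $\pw(G,S)=0$. So the claim that $P$ ``can be represented by its attachment to the rest and by $s$'' is false. The bag count $2|V(T)|-1$ already fails in your initial configuration, unless that configuration swallows essentially the whole grid, which has large pathwidth.

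Your ``main obstacle'' paragraph does not resolve the real issue, which is keeping a vertex of $R$ in the root's branch set; it only lists options. The fallback does not work either:
\begin{itemize}
\item \cref{diestelSrootedoriginal} concerns unrooted $S$-rooted models, so there is no root branch set into which $z$ could be forced.
\item Enlarging the forest changes the bound, as you concede.
\item A path-decomposition of the contracted graph does not lift back to one of $(G,S)$ of the same width. Undoing the contraction puts all of $V(P)$ in place of $z$, or else leaves components attached along all of $P$.
\end{itemize}

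The missing idea is the simple one. Seed the root's branch set with the single vertex $r\in R$, just as Diestel's argument yields the weakly $\{r\}$-rooted model of \cref{Diestel} for an arbitrary start vertex $r$. Let the root's branch set pick up its $S$-vertex the same way every new branch set must. A new branch set has to be adjacent to its parent's and in general cannot start in $S$, so the growing mechanism is already there and the width count is unchanged. Then check that the argument only ever enlarges existing branch sets and never discards them, so $r$ stays in the root's branch set throughout. Connectivity of $G$ is what guarantees that an exploration started at $r$ accounts for all of $G$.
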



Next, taking the point of view of graph blow-ups,  we introduce a generalization of the definition of the pathwidth of $(G, S)$ which will be helpful for our purposes.  
Let $G$ be a graph, let $S\subseteq V(G)$, and let $k$ be a positive integer. A $\mathit{k}$-\defin{partition-path-decomposition of} $\mathit{(G, S)}$ consists of an induced subgraph $H$ of $G$ such that $S\subseteq V(H)$, a partition $\mathcal P_H$ of $V(H)$ of width at most $k$, and a path-decomposition $\mathcal D_H$ of $H/\mathcal{P_H}$ such that for every connected component $C$ of $G-V(H)$, there exists a bag $B$ in $\mathcal{D_H}$ such that $N_{G}(V(C))$ is contained in the union of the parts of $\mathcal P_H$ contained in $B$.  We will say that the induced subgraph $H$, the partition $\mathcal P_H$ and the path-decomposition $\mathcal D_H$ are \defin{associated} to this decomposition.    
We define the $\mathit{k}$-\defin{partition-pathwidth of} $\mathit{(G, S)}$ as the minimum width of a path-decomposition associated to a $k$-partition-path-decomposition of $(G, S)$, and we denote it by \defin{$\ppw(k, G, S)$}.  Observe that for every $k'\leq k$, we have $\ppw(k', G, S)\geq \ppw(k, G, S)$.  We will use this fact later.    

Now we move on to the proof of \cref{apexforest}.  Doing so, we will prove the following analogue of \cref{tree}, in the setting of $S$-rooted models.  The arguments are strongly inspired by the proof of \cref{badtree} in \cite{dujmovic2024excluded} and our proof of \cref{tree}: 
\begin{theorem}\label{srooted} 
    Let $G$ be a graph, let $S\subseteq V(G)$, and let $T$ be a $t$-vertex tree with $t \geq 2$, maximum degree $d$, and radius $h$.  If $G$ does not contain any $S$-rooted model of $T$, then $\ppw((2t-1)d, G, S)\leq 2h-1$.
\end{theorem}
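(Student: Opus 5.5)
We mirror the proof of \cref{tree}, replacing weakly rooted models by $(S,R)$-rooted ones and pathwidth by the $k$-partition-pathwidth of $(G,S)$, with $k:=(2t-1)d$. We first reduce to a rooted statement. Root $T$ at a centre, so that $T$ has height $h$. Since $T$ embeds as a rooted subtree in $T_{h,d}$, it suffices to prove the following lemma by induction on $(h,|V(G)|)$ in lexicographic order.

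\textbf{Rooted lemma.} For every connected $G$, every $S\subseteq V(G)$ and every $r\in V(G)$ such that $G$ has no $(S\cup\{r\},\{r\})$-rooted model of $T$, there is a $k$-partition-path-decomposition of $(G,S\cup\{r\})$ with $\{r\}$ a part lying in the first bag, of width at most $2h-1$.

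The unrooted statement then follows by applying the lemma to each component, with $r$ chosen in $S$. Components containing no vertex of $S$ are left outside $H$.

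For the base case $h=1$ the tree is a star with at most $d$ leaves, and we use BFS layers from $r$. If some layer $V_i$ contained $d$ vertices of $S$, then contracting $V_0\cup\dots\cup V_{i-1}$ together with connecting paths would give a rooted star. Hence each layer contains fewer than $d$ vertices of $S$. We take $H$ to be the subgraph induced by the union, over all layers, of the $S$-vertices together with the BFS-tree paths to them. The cost of this is that the parts are no longer singleton layers. The decomposition has width $1$, and the components of $G-V(H)$ attach to at most two consecutive layers. The width bound $k$ still has to be checked here: we expect to cut each BFS-tree path at its branching points, so that every layer contributes at most about $(2t-1)d$ vertices.

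For the inductive step, let $F$ be the forest obtained from $T$ by deleting the root, with at most $d$ components, and let $R:=N_G(r)$. The graph $G-r$ has no $(S,R)$-rooted $F$-model. We need an $S$-rooted analogue of \cref{ErdosPosaForest}. Its proof goes through verbatim once \cref{Diestel} is replaced by \cref{diestelSrooted}: whenever $\pw(G,S)>2t_m-2$ we extract a piece whose path-decomposition has bags of size at most $2t_m-1$. The analogue yields a component $T'$ of $F$ and a set $X$ with $|X|\le (2t-1)(d-1)$ such that $G-r-X$ has no $(S,R)$-rooted $T'$-model. We then take $X$ minimal, so that each vertex of $X$ reaches $r$ by a path internally disjoint from $X$.

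Next we need a version of \cref{metalemme} for $k$-partition-path-decompositions, which plays the role of \cref{metalemmeproduit}. Its inputs are as follows. The components $C$ of $G-X-r$ that meet $R$ get decompositions from induction, because $T'$ has height less than $h$. The other components are handled by the induction on $|V(G)|$ through contracting minors, where we contract while keeping $S$ inside branch sets. The metalemma glues these, placing $X\cup\{r\}$ into a couple of extra bags, which raises the width from $2h-3$ to $2h-1$. Components of $G-V(H)$ carry over because the neighbourhood conditions are preserved under contraction.

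We expect the main obstacle to be the width accounting. The set $X$, of size up to $(2t-1)(d-1)$, must fit into parts of size $k=(2t-1)d$ while the width increases by only $2$. We would try to make $X$ a single part, or to merge it with the $S$-vertices of one layer. We also have to verify that the minor step (iii) survives when $S$ is present, by defining $S'$ as the set of branch vertices whose branch sets meet $S$.
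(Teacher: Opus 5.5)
Your overall architecture is the paper's: a rooted lemma proved by induction on $(h,|V(G)|)$, an Erd\H{o}s--P\'osa step producing $X$, minimality of $X$, and a gluing lemma with conditions (i)--(iii). The obstacle you anticipate is not one: $|X|\le k$, so $X$ is simply a single part, exactly as in \cref{metalemmeproduit}.

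The genuine gap is the base case $h=1$. Taking BFS layers in $G$ itself and putting the BFS-tree paths to the $S$-vertices into $H$ does not force the components of $G-V(H)$ to attach to boundedly many consecutive layers. Consider the following ladder:
\begin{itemize}
\item rails $rp_1\cdots p_n$ and $rq_1\cdots q_n$, and rungs $p_jq_j$;
\item $S=\{p_n\}$, and $T$ the $3$-vertex path rooted at its centre.
\end{itemize}
There is no rooted model. The only shortest $r$--$p_n$ path is $rp_1\cdots p_n$, so $V(H)=\{r,p_1,\dots,p_n\}$. Then $C=q_1\cdots q_n$ is a single component with $N_G(V(C))=V(H)$. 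For $n\ge 2k$, no width-$1$ decomposition with parts of size at most $k$ has a bag covering this neighbourhood. The part sizes you worried about are in fact fine: distinct layer-$i$ vertices on such paths have disjoint BFS-subtrees, each containing an $S$-vertex, so there are fewer than $d$ of them.

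The missing idea is to put only $A:=S\cup\{r\}$ into $H$, and to compute the layers in the graph $G'$ obtained by contracting each component of $G-A$ to a single vertex. Contracted vertices are pairwise nonadjacent, so a contracted vertex in layer $V'_i$ has all its neighbours in $V_{i-1}\cup V_i\cup V_{i+1}$, where $V_i:=V'_i\cap A$. The star argument, now applied in $G'$, gives $|V_i|\le d$ for $i\ge1$. Take the parts $\{r\},V_1\cup V_2,V_3\cup V_4,\dots$ and use pairs of consecutive parts as bags. Since three consecutive layers always lie in two consecutive parts, this gives width $1$ with parts of size at most $2d$.

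A second problem is a mismatch of rootings in the inductive step. Your rooted lemma assumes there is no $(S\cup\{r\},\{r\})$-rooted model, in which the root branch set need not meet $S$. But your Erd\H{o}s--P\'osa step, run with $(S,R)$-rooted models, only excludes $T'$-models in $G-r-X$ whose root branch set \emph{also} meets $S$. So in condition (ii) the induction hypothesis does not apply to a component $C\ni r'$. For instance, if $T'$ is a star with $d'$ leaves, $C$ could be a star centred at $r'\notin S$ whose $d'$ leaves lie in $S$.

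The paper keeps the hypothesis ``no $(S,\{r\})$-rooted model of $T$'' throughout (\cref{lemmetechniquepartitions}). It takes $F$ to be $T$ minus its root plus one extra isolated vertex, whose branch set supplies an $S$-vertex for the branch set of $r$. This costs one more component, hence $|X|\le d(2t-1)$ rather than your $(d-1)(2t-1)$. Alternatively, you could run the Erd\H{o}s--P\'osa lemma with models whose root branch sets meet $R$ and whose other branch sets meet $S$.

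Finally, that Erd\H{o}s--P\'osa lemma does not follow verbatim from \cref{ErdosPosaForest}, because \cref{diestelSrooted} provides no localized piece as \cref{Diestel} does. What works is the following:
\begin{itemize}
\item observe that $G-r$ has no $(S,R)$-rooted $T$-model;
\item apply \cref{diestelSrooted} once to get $\pw(G-r,S\setminus\{r\})\le 2t-2$;
\item repeatedly cut at the first bag of this single decomposition whose prefix, including the components of $G-r-V(H)$ attached to it, contains a rooted model of some component of $F$, as in \cref{erdos_posa_forest_general}.
\end{itemize}
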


First, we prove an analogue of \cref{metalemme} (which in fact implies \cref{metalemme} in the case $S=V(G)$).  The proof is an adaptation of the proof of Lemma $8$ in \cite{dujmovic2024excluded}. 

\begin{lemma}
    Let $G$ be a connected graph, let $S\subseteq V(G)$, let $x, y$ be integers with $x\geq 1$ and $y \geq 0$, let $r\in V(G)$, and let $R:= N_G(r)$.  Suppose that there exists a set $X\subseteq V(G) \backslash \{r\}$ of size at most $x$ such that:
    \begin{enumerate}
        \item For every $v\in X$, there exists a path in $G$ from $r$ to $v$ internally disjoint from $X$.
        \label{c1}
        \item For every connected component $C$ of $G-X-r$ containing a vertex of $R$, we have $\ppw(x, C, S\cap V(C))\leq y$.
        \label{c2}
        \item For every minor $G'$ of $G$ with $|V(G')|<|V(G)|$ and such that there exists a model $\mathcal M$ of $G'$ in $G$ and a vertex $r'$ of $G'$ whose branch set in $\mathcal M$ contains $r$, letting $S'$ be the set of vertices of $G'$ whose branch sets in $\mathcal M$ contain at least one vertex of $S$, there exists an $x$-partition-path-decomposition of $(G', S')$ of width at most $y+2$ such that the associated partition contains $\{r'\}$ and the first bag of the associated path-decomposition contains $\{r'\}$.
        \label{c3}
    \end{enumerate}
    Then there exists an $x$-partition-path-decomposition of $(G, S)$ of width at most $y+2$ such that the associated partition contains $\{r\}$ and the first bag of the associated path-decomposition contains $\{r\}$.
    \label{metalemmeproduit}
\end{lemma}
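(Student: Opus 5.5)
The plan is to glue three kinds of decompositions: one for $\{r\}$ and $X$, one for each component of $G-X-r$ that meets $R$, and one obtained from hypothesis \ref{c3} applied to a suitable contraction of $G$. As in Lemma~8 of \cite{dujmovic2024excluded}, the key move is to contract $r$ together with $X$ and the components meeting $R$ into a single vertex $r'$. Condition \ref{c1} is exactly what makes this set connected.

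\textbf{Setup and trivial case.} Let $C_1,\dots,C_k$ be the components of $G-X-r$ containing a vertex of $R$, and let $Z:=\{r\}\cup X\cup\bigcup_i V(C_i)$. Every vertex of $R=N_G(r)$ lies in $X$ or in some $C_i$, so each $C_i$ is adjacent to $r$. For $v\in X$, the path from $r$ to $v$ given by \ref{c1} has its interior inside $\bigcup_i V(C_i)$: its second vertex is in $R\setminus X$, so the interior stays in one such component. Hence $G[Z]$ is connected. If $Z=\{r\}$, then $G$ is connected and $r$ has no neighbours, so $G=\{r\}$ and the statement is trivial. Otherwise $G':=G/Z$ has fewer vertices than $G$, and its model has $r$ in the branch set of the contracted vertex $r'$. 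By \ref{c3} we obtain an $x$-partition-path-decomposition $(H',\mathcal P',\mathcal D')$ of $(G',S')$ of width at most $y+2$, with $\{r'\}\in\mathcal P'$ and $\{r'\}$ in the first bag of $\mathcal D'$.

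\textbf{Gluing.} By \ref{c2}, each $C_i$ has an $x$-partition-path-decomposition $(H_i,\mathcal P_i,\mathcal D_i)$ of $(C_i,S\cap V(C_i))$ of width at most $y$. Define:
\begin{itemize}
\item $H:=G[\{r\}\cup X\cup\bigcup_i V(H_i)\cup (V(H')\setminus\{r'\})]$;
\item $\mathcal P:=\{\{r\},X\}\cup\bigcup_i\mathcal P_i\cup(\mathcal P'\setminus\{\{r'\}\})$, dropping $X$ if it is empty (every part has size at most $x$ since $|X|\le x$);
\item the path-decomposition $\mathcal D$ of $H/\mathcal P$ is the concatenation of the bag $\{\{r\},X\}$, then for each $i$ the bags of $\mathcal D_i$ each augmented by the parts $\{r\}$ and $X$, and finally the bags of $\mathcal D'$ with the part $\{r'\}$ replaced by $X$.
\end{itemize}
A bag coming from some $\mathcal D_i$ has at most $y+1+2$ parts, and a bag coming from $\mathcal D'$ has at most $y+3$ parts, so the width is at most $y+2$. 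Moreover $\{r\}$ is a part and lies in the first bag.

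\textbf{Verification.} This is where most of the care is needed.
\begin{enumerate}
\item \emph{The bags containing each part are consecutive.} The part $X$ is in every bag; $\{r\}$ is in a prefix of the sequence; the parts of each $\mathcal P_i$ occur only in their own block; and the parts from $\mathcal P'$ occur only in the last block.
\item \emph{Every edge of $H/\mathcal P$ is covered.} Since $C_i$ is a component of $G-X-r$, there are no edges between distinct $C_i$, nor between $C_i$ and vertices outside $Z$. Edges from $r$ go only into $X\cup\bigcup_i V(C_i)$, and these are covered because $\{r\}$ is added to every $C_i$-block. Edges from $V(G)\setminus Z$ to $Z$ go only to $X$, and they correspond to edges at $r'$ in $G'$, hence are covered after the replacement $r'\mapsto X$.
\item \emph{Components of $G-V(H)$ are attached correctly.} Such a component $D$ either lies inside some $C_i-V(H_i)$, or it is a component of $G'-V(H')$ not containing $r'$. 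In the first case, $N_G(V(D))$ lies in $N_{C_i}(V(D))\cup X\cup\{r\}$; this is contained in a bag of the augmented $\mathcal D_i$-block because $\{r\}$ and $X$ were added to all of its bags. In the second case, $N_G(V(D))$ is obtained from $N_{G'}(V(D))$ by replacing $r'$ with vertices of $X$ only, so it is covered by the corresponding bag of $\mathcal D'$ after the replacement.
\item \emph{$S\subseteq V(H)$.} We have $S\cap V(C_i)\subseteq V(H_i)$, and $S\setminus Z\subseteq S'\setminus\{r'\}\subseteq V(H')$. The remaining vertices of $S$ lie in $\{r\}\cup X$, which is contained in $V(H)$ by construction.
\end{enumerate}

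I expect item~3 to be the main obstacle. The delicate point is that $N_G(V(D))$ may include $r$ or vertices of $X$ that are invisible inside $C_i$ or are hidden behind $r'$ in $G'$. Adding the parts $\{r\}$ and $X$ to every bag of the $\mathcal D_i$-blocks, and replacing $\{r'\}$ by $X$ in $\mathcal D'$, is exactly what handles this, and it is also where the budget of $+2$ in the width is spent.
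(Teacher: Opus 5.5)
Your proposal is correct and is essentially the paper's proof. The paper contracts only the union of the paths from condition~(i) into $r'$ and deletes the rest of the components meeting $R$, which yields exactly the same graph $G'$ as your contraction of $\{r\}\cup X\cup\bigcup_i V(C_i)$; your version just absorbs the case $X=\emptyset$, which the paper treats separately. The gluing of $\{\{r\},X\}$, the augmented $\mathcal D_i$'s and the modified $\mathcal D'$ is the same as in the paper.

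One slip in your verification: $X$ is not in every bag of the $\mathcal D'$-block, only in those that contained $\{r'\}$. Those bags form a prefix of that block because $\{r'\}$ lies in the first bag of $\mathcal D'$, and this is where that part of condition~(iii) is used.
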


\begin{proof} 
    First, we note that if $R$ is empty then $G$ consists of only the vertex $r$, and it is easy to define an $x$-partition-path-decomposition of $G$ with the desired properties. Thus, we may assume that $R$ is not empty. 

    Next, we deal with the case where $X$ is empty. In this case, every connected component of $G-X-r=G-r$ contains a vertex of $R$ since $G$ is connected and $R=N_G(r)$. Property~\ref{c2} implies then that $\ppw(x, G-r, S\setminus \{r\})\leq y$.  
    Taking an $x$-partition-path-decomposition of $G-r$ of width at most $y$, and adding $\{r\}$ to the associated partition and to all the bags of the associated path-decomposition, we obtain an $x$-partition-path-decomposition of $(G, S)$ of width at most $y+1$ such that the associated partition has $\{r\}$ as one of its parts, and the first bag of the associated path-decomposition contains $\{r\}$. The result follows.  Thus, for the remainder of the proof, we may assume that $X$ is nonempty.
    
    Let $G_1, \ldots , G_p$ be the connected components of $G-X-r$ that contain a vertex of $R$.  By Property~\ref{c2}, for each  $i\in [p]$, there exists $A_i\subseteq V(G_i)$, $\mathcal P_i$ a partition of $A_i$ of width at most $x$, and a path-decomposition $\mathcal{D}_i$ of $G[A_i]/\mathcal{P}_i$ such that: 
\begin{enumerate}
        \item $S \cap V(G_i) \subseteq A_i$; 
        \item $\mathcal D_i$ has width at most $y$; and 
        \item for every connected component $C$ of $G_i-A_i$, there exists a bag of $\mathcal D_i$ that contains all the parts of $\mathcal P_i$ intersecting $N_{G_i}(V(C))$. 
    \end{enumerate}
    Let $Z$ be the union of the vertex sets of all connected components of $G-X-r$ having no vertices in $R$.  Consider a vertex $v\in X$, and let $P_v$ be a path from $r$ to $v$ internally disjoint from $X$, which exists by Property~\ref{c1}.  Because the neighbor of $r$ in $P_v$ belongs to $R$, and thus is in $G_i$ for some $i\in [p]$, and since $v$ is the only vertex in $P_v$ which is also in $X$, $P_v-\{v, r\}$ is fully contained in $G_i$, and so $P_v$ avoids $Z$.  Thus, $G[\bigcup_{v\in X}V(P_v)]$ is connected and avoids $Z$.  Let $G'$ be the graph obtained from $G$ by contracting $\bigcup_{v \in X}V(P_v)$ in one vertex $r'$ and deleting all the other vertices of $G$ not in $Z$, so that $V(G')=\{r'\}\cup Z$.  Observe that $X\cup \{r\}\subset \bigcup_{v\in X}V(P_v)$.  Because $X$ is nonempty, $|X\cup \{r\}|\geq 2$, thus $|V(G')|<|V(G)|$, and hence by Property~\ref{c3}, there exists $A'\subseteq V(G')$, a partition $\mathcal P'$ of width at most $x$ of $A'$ having $\{r'\}$ as one of its parts, and a path-decomposition $\mathcal{D}'$ of $G[A']/\mathcal{P}'$ such that: \begin{enumerate}
        \item $(S\cap Z) \cup \{r'\} \subseteq A'$ ;
        \item $\mathcal D'$ has width at most $y+2$, and its first bag contains $\{r'\}$; and 
        \item for every connected component $C$ of $G'-A'$, there exists a bag of $\mathcal D'$ that contains all the parts of $\mathcal P'$ intersecting $ N_{G'}(V(C))$. 
    \end{enumerate}
Let 
\begin{align*}
    A &:= \{r\} \cup X \cup (\cup_{1\leq i\leq p} A_i)\cup (A'\setminus\{r'\})
\intertext{and}
    \mathcal P &:= \big\{\{r\}, X\big\} \cup (\cup_{1\leq i\leq p} \mathcal P_i)\cup (\mathcal P'\setminus \big\{\{r'\}\big\}).
\end{align*}
Observe that $S\subseteq A$ and that $\mathcal P$ is a partition of $A$ of width at most $x$. 

Let $\mathcal D$ be the sequence of subsets of vertices of $G[A]/\mathcal P$ obtained from the concatenation of $\{\{r\}, X\}, \mathcal{D}_1, \ldots, \mathcal{D}_p$ and $\mathcal D'$ by adding $\{r\}$ and $X$ to every bag coming from the $\mathcal D'_i$'s and replacing $\{r'\}$ by $X$ in every bag of $\mathcal D'$ containing $\{r'\}$.  We argue that $\mathcal D$ is a path-decomposition of $G[A]/\mathcal P$.  It is easy to check that every part of $\mathcal P$ appears in consecutive bags. Indeed, the $\mathcal D_i$'s and $\mathcal D'$ are path-decompositions, $X$ was added to all the bags coming from the $\mathcal D_i$'s, the only part possibly in common between the bags coming from the $\mathcal D_i$'s and the bags coming from $\mathcal D'$ is $X$, and $X$ is also in the first bag coming from $\mathcal D'$. 

Then, we show that every part of $\mathcal P$ is in at least one bag of $\mathcal D$.  This is true by construction for $\{r\}$ and $X$. Every part of $\mathcal P$ which is in $\mathcal P'_i$ for some $i\in [p]$ is in at least one bag of $\mathcal D_i$, and thus in at least one bag of $\mathcal D$.  Every part of $\mathcal P$ which is in $\mathcal P'$ appears in at least one bag of $\mathcal D'$ and, since $\{r'\}\not \in \mathcal P$, appears then also in at least one bag of $\mathcal D$.  Thus, every part of $\mathcal P$ appears indeed in at least one bag of $\mathcal D$.

Next, we show that for every two distinct parts $P,P'\in \mathcal P$ that are adjacent in $G[A]/\mathcal{P}$, there is a bag of $\mathcal D$ containing both $P$ and $P'$. By construction, one of the following holds: 
\begin{enumerate}
    \item $P P' \in E(G[A_i]/\mathcal{P}_i)$ for some $i\in [p]$, and then there is a bag $W$ of $\mathcal D_i$ with $P,P' \in W$, and by construction $W \cup \{\{r\},X\}$ is a bag in $\mathcal D$.
    \item $PP' \in E(G[A']/\mathcal{P}')$, and so there is a bag $W$ of $\mathcal D'$ with $P,P' \in W$, and by construction $W$ or $(W \setminus \{r'\}) \cup \{X\}$ is a bag in $\mathcal{D}$, and contains both $P$ and $P'$.
    \item $P=X$ or $P'=X$, and assume without loss of generality that $P=X$. If $P' \in \mathcal P_i$ for some $i \in [p]$, then any bag $W$ of $\mathcal{D}_i$ containing $P'$ will yields a bag $W \cup \{\{r\},X\}$ in $\mathcal{D}$, which contains both $P$ and $P'$. Similarly, if $P'=\{r\}$, then $\{\{r\}, X\}$ contains both $P$ and $P'$. If $P' \in \mathcal P'$, then $\{r'\} P' \in E(G[A']/\mathcal{P}')$, and so there is a bag $W$ of $\mathcal D'$ with $\{r'\}, P' \in W$, and therefore $\big(W \setminus \big\{\{r'\}\big\}\big) \cup \{X\}$ is a bag in $\mathcal{D}$ containing both $P$ and $P'$.
    \item $P=\{r\}$ or $P'=\{r\}$, and assume without loss of generality that $P = \{r\}$. Then either $P'=\{X\}$, but this case has been already considered, or $P' \in \mathcal{P}_i$ for some $i \in [p]$. Then, any bag $W$ of $\mathcal{D}_i$ containing $P'$ yields a bag $W \cup \{\{r\},X\}$ in $\mathcal{D}$ which contains both $P$ and $P'$.
\end{enumerate}

Finally, we have to show that for each connected component $C$ of $G-A$, there is a bag in $\mathcal D$ that contains every part of $\mathcal P$ intersecting $N_G(V(C))$.  Since $X\cup \{r\}\subseteq A$, $C$ is either a connected component of $G_i-A_i$ for some $i \in [p]$, or $C$ is a connected component of $G'-A'$.  In the first case, there is a bag $W$ of $\mathcal{D}_i$ whose union contains $N_{G_i}(V(C))$, and since $W \cup \{\{r\},X\}$ is a bag of $\mathcal{D}$ and $N_G(V(C)) \subseteq N_{G_i}(V(C)) \cup \{r\}\cup X$, we are done. In the second case, either $r' \in N_{G'}(V(C))$ and so $N_G(V(C)) \subseteq (N_{G'}(V(C)) \setminus \{r'\}) \cup X$, or $N_G(V(C)) = N_{G'}(V(C))$.
In both cases, any bag $W$ of $\mathcal{D}'$ whose union contains $N_{G'}(V(C))$ will yield a bag in $\mathcal{D}$ (namely $\big(W \setminus \big\{\{r'\}\big\}\big)\cup \{X\}$ or $W$) whose union contains $N_{G'}(V(C))$.  

It follows that  $\mathcal D$ is a path-decomposition of $G[A]/\mathcal P$, as claimed. Now, observe that $\mathcal D$ has width at most $y+2$, by construction. Therefore, the set $A$, the partition $\mathcal P$, and the path-decomposition $\mathcal D$ all together give an $x$-partition-path-decomposition of $(G, S)$ of width at most $y+2$ such that the associated partition contains $\{r\}$ and the first bag of the associated path-decomposition contains $\{r\}$, as desired. This concludes the proof.
\end{proof}

We will also use the following lemma, inspired by Lemma~$23$ in \cite{hodor2024quickly}.

\begin{lemma}
    Let $G$ be a graph, let $S, R \subseteq V(G)$, and let $F$ be a nonempty rooted forest.  
    Let $d$ be the number of connected components of $F$ and let $p:=\pw(G, S)$.   Assume that $G$ does not contain any $(S, R)$-rooted model of $F$. 
    Then, there exists a set $X\subseteq V(G)$ and a connected component $T$ of $F$ such that $|X|\leq (d-1)(p+1)$, and $G-X$ does not contain any  $(S, R)$-rooted 
    model of $T$. 
    \label{erdos_posa_forest_general}
\end{lemma}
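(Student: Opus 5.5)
Let $T_1,\dots,T_d$ be the connected components of $F$. We induct on $d$. If $d=1$, take $X=\emptyset$ and $T=T_1$: $G$ has no $(S,R)$-rooted model of $F=T_1$.

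For $d\geq 2$, fix a path-decomposition of $(G,S)$ of width $p$. That is, an induced subgraph $H\supseteq S$ and a path-decomposition $(B_1,\dots,B_q)$ of $H$ such that, for every component $C$ of $G-V(H)$, some bag contains $N_G(V(C))$. For $j\in[q]$, let $G_j$ be the subgraph of $G$ induced by $B_1\cup\dots\cup B_j$ together with all components $C$ of $G-V(H)$ whose neighbourhood lies in a bag $B_{j'}$ with $j'\le j$. To make this well defined, attach each such $C$ at the smallest such index $j'$. By construction, $G_j-B_j$ and $G-V(G_j)$ have no edges between them. For $H$-vertices this is the usual path-decomposition argument. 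A component attached at $j'\le j$ has all its neighbours in $B_{j'}\subseteq V(G_j)$. A component attached after $j$ has its neighbourhood in a later bag, so any neighbour $u$ of it that lies in $G_j$ belongs to $B_{j'}$ for some $j'>j$ and also to $B_1\cup\dots\cup B_j$, hence $u\in B_j$.

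If $G$ contains no $(S,R)$-rooted model of some $T_i$, we are done with $X=\emptyset$. Otherwise, let $\ell$ be the smallest index such that $G_\ell$ contains an $(S,R)$-rooted model of some component $T_{i'}$ (such $\ell$ exists since $G_q=G$). Consider $G':=G-V(G_\ell)$ with the forest $F':=F-T_{i'}$, which has $d-1$ components. If $G'$ contained an $(S,R)$-rooted model of $F'$, combining it with the model of $T_{i'}$ inside $G_\ell$ would give an $(S,R)$-rooted model of $F$ in $G$, a contradiction. Restricting the decomposition of $(G,S)$ to $G'$ shows $\pw(G',S\cap V(G'))\le p$: take $H-V(G_\ell)$ with the bags $B_j\setminus V(G_\ell)$, and check that each component of $G'-V(H)$ still has its neighbourhood inside a single bag. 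By induction there exist $X'$ with $|X'|\le (d-2)(p+1)$ and a component $T_z$ of $F'$ such that $G'-X'$ has no $(S,R)$-rooted $T_z$-model.

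Set $X:=X'\cup B_\ell$, so $|X|\le (d-1)(p+1)$. Since there are no edges between $G_\ell-B_\ell$ and $G'$, every connected branch-set structure of a $T_z$-model in $G-X$ lies either in $G_\ell-B_\ell$ or in $G'-X'$. The first is impossible by minimality of $\ell$, because $G_\ell-B_\ell\subseteq G_{\ell-1}$; this containment needs checking for attached components, since a component attached at $\ell$ with all its neighbours in $B_\ell$ becomes a separate component of $G_\ell - B_\ell$ and is not contained in $G_{\ell-1}$. Such a component contains no vertex of $S$, because $S\subseteq V(H)$, and every branch set of an $S$-rooted model must meet $S$. So any $(S,R)$-rooted model in $G_\ell-B_\ell$ lives in the $H$-part, which is contained in $G_{\ell-1}$. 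The second is impossible by the choice of $X'$ and $T_z$. Hence $T:=T_z$ and $X$ satisfy the statement.

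The main obstacle is this bookkeeping: defining $G_j$ so that the separation property holds, and verifying that the restricted decomposition of $(G',S\cap V(G'))$ remains valid. The key facts to exploit are that every branch set must meet $S\subseteq V(H)$, and that we may freely discard components outside $H$ that carry no $S$-vertices.
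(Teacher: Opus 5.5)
Your proof is correct and follows essentially the same approach as the paper. You induct on $d$, take the minimal prefix $G_\ell$ of a path-decomposition of $(G,S)$ that contains an $(S,R)$-rooted model of some component $T_{i'}$, apply induction to the remainder with $F - V(T_{i'})$, and set $X := X' \cup B_\ell$. The differences are minor: you apply induction to $G - V(G_\ell)$ rather than to $G - B_\ell$ as the paper does (this mirrors the paper's proof of \cref{ErdosPosaForest}), and you explicitly handle the components of $G - V(H)$ attached at index $\ell$, using that they contain no vertex of $S$, which the paper's appeal to minimality leaves implicit.
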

\begin{proof}
    The proof is by induction on $d$.  If $d=1$, then the result is true by taking $X=\emptyset$ and $T=F$.

    Next, we do the inductive step. Assume thus $d\geq 2$, and that the lemma holds for smaller values of $d$. 
    If for some connected component $T$ of $F$, the graph $G$ has no $(S, R)$-rooted model of $T$, then the lemma holds with $X=\emptyset$ and $T$. Thus, we may assume that, for every connected component $T$ of $F$, there is some $(S, R)$-rooted model of $T$ in $G$. 
    (Note that this implies in particular that $S$ and $R$ are both non empty.) 
    We may also assume that every connected component of $G$ contains at least one vertex from $S$, since those that do not can be freely discarded as they have no impact on the existence of the set $X$.   
    
    Consider a path-decomposition of $(G, S)$ of width at most $p$, with $H$ denoting the corresponding induced subgraph of $G$, and $B_1, \dots, B_m$ the corresponding sequence of bags. Observe that for every connected component $C$ of $G-V(H)$ we have $N_{G}(V(C))\ne \emptyset$ (since every connected component of $G$ contains at least one vertex from $S$) and moreover $N_{G}(V(C)) \subseteq B_i$ for some $i\in [m]$ (by the definition of a path-decomposition of $(G, S)$). For every $i\in [m]$, let $V_i$ be the union of the vertex sets of the connected components $C$ of $G-V(H)$ such that $N_{G}(V(C)) \subseteq B_i$. Thus, $V(H)\cup (\bigcup_{i=1}^m V_i)=V(G)$.  
    For every $j\in [m]$, let $G_j := G[\bigcup_{i=1}^j (B_i \cup V_i)]$. 
    Observe that, by the definition of a path decomposition of $(G, S)$, the set $B_j$ separates $V(G_j)$ from $V(G)\setminus V(G_j)$ in $G$.
    Let $j\in [m]$ be minimum such that $G_j$ contains an 
    $(S, R)$-rooted
    model of some connected component $T'$ of $F$.  
    Note that $j$ is well defined by our assumption on $G$, since $G=G_m$ contains an 
    $(S, R)$-rooted 
    of some connected component of $F$.      
    Let $F':=F-V(T')$.  
    
    Observe that $G-V(G_j)$ does not contain an $(S, R)$-rooted model of $F'$.  Indeed, if it would be the case, then $G$ would contain an $(S, R)$-rooted model of $F$,  because $G_j$ contains an 
    $(S, R)$-rooted model of $T'$, a contradiction.
    This implies that $G-B_j$ does not contain any $(S, R)$-rooted model of $F'$, because $G_j-B_j$ does not contain any $(S, R)$-rooted model of any tree of $F'$. 
    
    Thus, we may apply the induction hypothesis on $G-B_j$ and $F'$.  Letting $p':=\pw(G-B_j, S\setminus B_j)$, this gives a connected component $T$ of $F'$ and a set $X'\subseteq V(G-B_j)$ of size at most $(d-2)(p'+1)\leq (d-2)(p+1)$ such that $G-B_j-X'$ does not contain any 
    $(S, R)$-rooted 
    model of $T$.  Then, letting $X:= X'\cup B_j$, it follows that $|X|=|X'|+|B_j|\leq (d-2)(p+1)+(p+1)=(d-1)(p+1)$, and that $G-X$ does not contain any 
    $(S, R)$-rooted 
    model of $T$, as desired. This concludes the proof.
\end{proof}

Next, we show an analogue of \cref{lemmearticle} (which implies \cref{lemmearticle} in the case $S=V(G)$ but with a worse bound for the width of the partition).  

\begin{lemma} Let $T$ be a $t$-vertex tree rooted tree of height $h$ and maximum degree $d$, where $t\geq 2$ and $h, d \geq 1$.   
Suppose that  $G$ is a connected graph with a distinguished set $S\subseteq V(G)$ and vertex $r\in V(G)$ such that $G$ does not contain any $(S, \{r\})$-rooted model of $T$. Then, there exists a $((2t-1)d)$-partition-path-decomposition of $(G, S)$ of width at most $2h-1$ such that the associated partition contains $\{r\}$ and the first bag of the path-decomposition contains $\{r\}$.
    \label{lemmetechniquepartitions}
\end{lemma}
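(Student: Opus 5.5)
We argue by induction on $(h, |V(G)|)$ in lexicographic order, mirroring the proof of \cref{lemmearticlevariante}. The inductive step uses \cref{metalemmeproduit} with $x := (2t-1)d$ and $y := 2h-3$. For the base case $h=1$, $T$ is a star with center the root and $t-1\le d$ leaves. We use BFS layers $V_i$ from $r$, but now only the vertices of $S$ matter. Let $A$ be the union of $\{r\}$ and the sets $V_i\cap S$, with parts $\{r\}$ and $V_i\cap S$ for $i\ge 1$. If some $|V_i\cap S|\ge t-1$, then $V_0\cup\dots\cup V_{i-1}$ together with $t-1$ singletons of $V_i\cap S$ give an $(S,\{r\})$-rooted model of $T$. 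This requires the center's branch set to contain a vertex of $S$. Since it contains $r$, this holds whenever $r\in S$. If $r\notin S$, we can enlarge the center's branch set by a path to some $S$-vertex, or else simply put $r$ into $A$ as its own part. I expect this detail to be a minor fix. Consecutive layers then give a path-decomposition of width $1$ with $\{r\}$ in the first bag. Every component $C$ of $G-A$ lies within a union of non-$S$ vertices. Its neighbours in $A$ could span many layers, though, so I would define the parts as $V_i$ minus the non-$S$ vertices that are not needed. A cleaner choice is to keep $A=V(G)$ and use the bound $|V_i|$. This fails without $S=V(G)$, so instead I will contract each component of $G-A$ into a vertex adjacent to layers. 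Since components are connected, their attachments lie in at most $2$ consecutive layers of $A$ plus possibly one more. It is safer to absorb this by treating the base case via the inductive machinery with $h'=0$, i.e.\ a single-vertex tree.

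For the inductive step $h\ge2$, let $F$ be $T$ minus its root. Then $F$ has at most $d$ components, each of height at most $h-1$ and at most $t-1$ vertices. Put $R:=N_G(r)$. The graph $G-r$ has no $(S,R)$-rooted model of $F$. Suppose first that $r\in S$ (if not, the root branch set must be extended to reach $S$, and I would handle this by the same trick as in the base case). Each component $C$ of $G-r$ containing a vertex of $R$ has no $(S, R\cap V(C))$-rooted model of $T$ (a fortiori of $F$). By \cref{diestelSrooted} applied to $C$, $\pw(C,S)\le 2t-2$, so $\pw(G-r,S)\le 2t-2$. Now \cref{erdos_posa_forest_general} gives a component $T'$ of $F$ and a set $X$ with $|X|\le (d-1)(2t-1)\le (2t-1)d$ such that $G-r-X$ has no $(S,R)$-rooted $T'$-model. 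We take $X$ inclusion-minimal, and condition \ref{c1} follows exactly as in \cref{lemmearticlevariante}. Since a minimal witness model must use $v$, it contains an $R$-rooted path to $v$.

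Condition \ref{c2} holds because, for each component $C$ of $G-X-r$ containing $v\in R$, $C$ has no $(S,\{v\})$-rooted $T'$-model. We then apply induction with height $h'<h$. The maximum degree is at most $d$ and the size is at most $t$, so the parameter $(2t'-1)d'\le(2t-1)d$, and monotonicity of $\ppw$ in $k$ gives width at most $2h'-1\le 2h-3$. Condition \ref{c3} holds because a minor $G'$ with the induced $S'$ and $r'$ has no $(S',\{r'\})$-rooted $T$-model: such a model would lift to an $(S,\{r\})$-rooted one in $G$, since branch sets of $S'$-vertices contain $S$-vertices. Induction on $|V(G')|$ applies, noting that $G'$ is connected. \cref{metalemmeproduit} then yields width $2h-1$.

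The main obstacle is the bookkeeping of $S$-rootedness for the root branch set. A model of $F$ in $G-r$ extends to a model of $T$ only if the root's branch set $\{r\}$ meets $S$. I would first try to reduce to $r\in S$ by slightly strengthening the statement: replace $S$ by $S\cup\{r\}$. This is harmless because the conclusion puts $\{r\}$ into the partition anyway, but the hypothesis must still hold, and that needs a check. Also, $T$ may have no $(S\cup\{r\},\{r\})$-rooted model only if extra care is taken, which I would verify next.
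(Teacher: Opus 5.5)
Your skeleton matches the paper: lexicographic induction on $(h,|V(G)|)$, \cref{diestelSrooted} to get $\pw(G-r,S\setminus\{r\})\le 2t-2$, \cref{erdos_posa_forest_general} to produce $X$, then \cref{metalemmeproduit} with $x=(2t-1)d$ and $y=2h-3$. However, the issue you call the main obstacle is a genuine gap. It cannot be avoided by assuming $r\in S$: in condition~\ref{c2} you recurse on $(C,S\cap V(C),v)$ with $v\in R$, and $v$ need not lie in $S$. So the case $r\notin S$ is forced on you by the induction itself.

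Your proposed reduction to $S\cup\{r\}$ fails because the hypothesis is not preserved. Take $G=K_{1,d}$ with center $r$, $S$ the set of leaves, and $T=K_{1,d}$ rooted at its center. There is no $(S,\{r\})$-rooted $T$-model, since there are $d+1$ branch sets but only $d$ vertices of $S$. Yet $G$ itself is an $(S\cup\{r\},\{r\})$-rooted $T$-model. The same example shows your base-case threshold $|V_i\cap S|\ge t-1$ is wrong when $r\notin S$.

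The missing idea is to take $F$ to be $T$ minus its root \emph{together with one extra isolated vertex}. In an $(S,R)$-rooted $F$-model in $G-r$, the branch set $W_0$ of the extra vertex contains a vertex of $S$ and a vertex of $R$. Hence $\{r\}\cup W_0$ is a legitimate root branch set, giving an $(S,\{r\})$-rooted $T$-model in $G$. Now $F$ has up to $d+1$ components, so \cref{erdos_posa_forest_general} gives $|X|\le d(2t-1)$; this is exactly where the factor $d$ (rather than your $d-1$) is spent.

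You also need $T'$ to have height at least $1$ for the induction hypothesis to apply. If a single-vertex component has no $(S,R)$-rooted model in $G-r-X$, then no component does, since the root branch set of any such model would be one. So you may switch to a component of height at least $1$, which exists because $h\ge 2$.

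Your base case $h=1$ is not actually carried out. You correctly observe the difficulty: contract each component of $G-(S\cup\{r\})$ and take BFS layers $V'_i$ from $r$ in the contracted graph. A contracted component can then see three consecutive layers. These do not fit in a width-$1$ bag if each $V_i:=V'_i\cap(S\cup\{r\})$ is its own part.

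The paper resolves this by using the parts $V_{2i}\cup V_{2i+1}$. Any three consecutive layers then lie in two consecutive parts, which form a bag. It uses the threshold $|V_i|\ge d+1$ for $i\ge 1$: a spare vertex of $V_i$ joins the center's branch set $V'_0\cup\dots\cup V'_{i-1}$ so that it meets $S$, and $d$ further vertices of $V_i$ serve as the leaves. This yields $|V_i|\le d$ and parts of size at most $2d\le(2t-1)d$.

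Your fallback of running the inductive machinery down to $h'=0$ does not work as stated. The conclusion for $h=0$ would require width $2\cdot 0-1=-1$ with $\{r\}$ in a bag, which is impossible. Moreover, \cref{metalemmeproduit} is only stated for $y\ge 0$.
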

\begin{proof}
    The proof is by induction on the pairs $(h, |V(G)|)$, in lexicographic order.  If $|V(G)|=1$, the lemma is easily seen to hold by taking the partition $\{\{r\}\}$ of $V(G)$. Suppose thus $|V(G)|\ge 2$. 
      
    First, we consider the case $h = 1$.
    Let $A:=S \cup \{r\}$ and let $G'$ be the graph obtained from $G$ by contracting  each induced subgraph of $G$ corresponding to a connected component of $G-A$ into one vertex. 
    Observe that $G'$ is connected, since $G$ is connected.  
    For every nonnegative integer $i$, let $V'_i:= \{ v\in V(G') \ | \ \text{dist}_{G'}(v, r) =i\}$ 
    and $V_i:=V'_i\cap A$. 
    Observe that if $|V_i|\geq d+1$ for some $i\geq 1$, then contracting the connected subgraph of $G'$ induced by $V'_0 \cup V'_1 \cup \cdots \cup V'_{i-1}$ into a single vertex, we see that there is an $(S, \{r\})$-rooted model of $T$ in $G'$,
    and thus also in $G$, a contradiction.  Thus, $|V_i|\leq d$ holds for every $i\geq 0$.  
    
    Let $m\in \mathbb N$ be smallest such that $V_{2(m+1)}= \emptyset$.  Let $\mathcal{P} := \{\{V_{2i} \cup V_{2i+1} \} \ | \ i\in  \mathbb N, i \leq m\}$.  If $m\ne 0$, let $D_i:= \{\{V_{2i} \cup V_{2i+1}\}, \{V_{2i+2} \cup V_{2i+3}\}\}$ for every nonnegative integer $i < m$,
    and let $\mathcal{D} := \{D_i \ | \ i\in  \{0, 1, \dots, m-1\}\}$.  If $m=0$, let $D_0 := \{\{V_0, V_1\}\}$ and $\mathcal{D}:=\{D_0\}$.
    Because $G[A]/\mathcal P$ is a subgraph of a path, it is easy to see that $\mathcal D$ is a path-decomposition of $G[A]/ \mathcal P$ of width at most $1$.  If $m\ne 0$, for each connected component $C$ of $G-A$, $C$ corresponds to a vertex $v'$ of $G'$, and there exists exactly one index $i$ between $1$ and $m-1$ such that $v'\in V'_i$.  Thus, each vertex of $A$ that has a neighbor in $C$ in $G$ belongs to either $V_{i-1}, V_{i}$ or $V_{i+1}$, which are all contained in a common bag of $\mathcal D$.  Moreover, each part of $\mathcal P$ has size at most $2d\leq (2t-1)d$.  The case $m=0$ is easier, and very similar.  Thus, the lemma holds for $h=1$. 
    
    Now we do the inductive step.  Assume that $h\geq 2$. Let $R:= N_G(r)$.  Let $F$ be the rooted forest obtained from $T$ by removing its root and adding a single-vertex component.  Observe that $F$ has at most $d+1$ connected components, and that if $G-r$ contains an $(S, R)$-rooted model of $F$, then $G$ contains an $(S, \{r\})$-rooted model of $T$, a contradiction.    
    Thus, $G-r$ has no $(S, R)$-rooted model of $F$.  
    
    Note also that $G-r$ has no $(S, R)$-rooted model of $T$, since such a model would easily give an $(S, \{r\})$-rooted model of $T$ in $G$. 
    \cref{diestelSrooted} implies then that $\pw(G-r, S\setminus\{r\})\leq 2t-2$.
    
    It follows from the above observations that we may apply \cref{erdos_posa_forest_general} with $G-r, S\setminus\{r\}, $R$, p=2t-2$ and $F$, and thus there exists a set $X\subseteq V(G-r)$ of size at most $(2t-1)d$ and a tree $T'$ of $F$ such that $G-r-X$ does not contain any 
    $(S, R)$-rooted $T'$-model.  Observe that, because $h\geq 2$, there is a tree of $F$ whose height is at least $1$, and thus we may assume that $T'$ has been chosen so that its height is at least $1$.   Without loss of generality, assume that $X$ is inclusion-wise minimal such that $G-r-X$ does not contain any $(S, R)$-rooted $T'$-model.  Let $d'$ be the maximum degree of $T'$. Observe that $d'\leq d$.  We will show that the hypotheses of \cref{metalemmeproduit} are satisfied with $S, x=(2t-1)d$, $y=2h-3$, $r$, and $X$.

    We start by showing that condition \ref{c1} of \cref{metalemmeproduit} holds. 
    Let $v\in X$.  By the minimality of $X$, there exists an $(S, R)$-rooted model of $T'$ in $(G-r)-(X\backslash \{v\})$.  Because $G-r-X$ does not contain any $(S, R)$-rooted model of $T'$, this model has to contain $v$ in at least one of its branch sets, and thus in particular there is a path in $(G-r)-(X\setminus \{v\})$ between $v$ and some vertex $r'\in R$.  Thus, in $G-(X \setminus \{v\})$, there is a path from $v$ to $r$, as desired. 

    Next, we show that condition \ref{c2} of \cref{metalemmeproduit} holds. 
    Let $C$ be a connected component of $G-X-r$ containing a vertex $r'\in R$.  By hypothesis on $X$, $C$ does not contain any $(S, R)$-rooted $T'$-model.  Because the height $h'$ of $T'$ satisfies $1\leq h'\leq h-1$, we may apply the induction hypothesis to $T', h', d', C, S\cap V(C)$ and $r'$, and it follows in particular that 
    \[
    \ppw((2t-1)d, C, S\cap V(C)) 
    \leq \ppw((2|V(T')|-1)d', C, S\cap V(C)))
    \leq 2(h-1)-1
    = 2h-3
    \]        
    using that $d'\leq d$ and $|V(T')|\leq t$.  Thus, condition \ref{c2} is verified. 

    Finally, we show that condition \ref{c3} of \cref{metalemmeproduit} holds. 
    Let $G'$ be a minor of $G$ such that $|V(G')|<|V(G)|$, and such that there exists a model $\mathcal M$ of $G'$ in $G$ and a vertex $r'$ in $G'$ whose branch set in $\mathcal M$ contains $r$.  Let $S'$ be the set of vertices of $G'$ whose branch set in $\mathcal M$ contains a vertex of $S$.  Because $|V(G')|<|V(G)|$, and because $G'$ does not contain any $(S', \{r'\})$-rooted model of $T$---otherwise $G$ would contain an $(S, \{r\})$-rooted model of $T$---we may apply the induction hypothesis.  
    Therefore, there exists a $((2t-1)d)$-partition-path-decomposition of $(G', S')$ of width at most $2h-1$ such that the associated partition contains $\{r'\}$ and the first bag of the path-decomposition contains $\{r'\}$.  
    Thus, condition \ref{c3} is also verified. 
    
    Therefore, we may apply \cref{metalemmeproduit} as claimed, and the lemma follows.
\end{proof}

The proof of \cref{srooted} follows directly from applying \cref{lemmetechniquepartitions} on each connected component of the graph, by rooting $T$ so that its height is equal to its radius, and picking an arbitrary vertex $r$ in each connected component. 

Next, we will show \cref{apexforest}. Its proof relies on \cref{srooted} and a couple extra lemmas. 
We start with the following lemma. 
We remark that its proof is an adaptation of a proof idea that appears inside the proof of Lemma~15 in \cite{hodor2024quickly}.

\begin{lemma}
    Let $G$ be a connected graph, let $u\in V(G)$, let $S:=N_G(u)$ and let $x, y\in \mathbb N$.  
    Suppose that there exists an $x$-partition-path-decomposition $\mathcal D$ of width at most $y$ of $(G-u, S)$, with associated partition $\mathcal P$ and associated induced subgraph $H$. 
    Suppose further that $H$ has been chosen so that $|V(H)|$ is minimal.   
    Then, for each connected component $C$ of $G-u - V(H)$, the graph $G-V(C)$ is connected. 
 \label{connexite}
\end{lemma}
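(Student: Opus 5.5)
Argue by contradiction: if some component $C$ of $G-u-V(H)$ has $G-V(C)$ disconnected, we exhibit a valid $x$-partition-path-decomposition of $(G-u,S)$ of width at most $y$ whose induced subgraph has strictly fewer vertices than $H$. This contradicts the minimality of $|V(H)|$.

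The first step is to understand the components of $G-V(C)$. Since $C$ avoids $u$ and $S=N_G(u)$ is contained in $V(H)$, the set $\{u\}\cup S$ lies inside $G-V(C)$. Moreover, $u$ together with its neighbours lies in a single component $K_0$ of $G-V(C)$. Let $K$ be a component of $G-V(C)$ different from $K_0$. Then $K$ contains neither $u$ nor any vertex of $S$. Since $G$ is connected, every vertex of $K$ is joined to $C$ by a path inside $K\cup C$. In particular $N_G(V(K))\subseteq V(C)$, and $V(K)\cap V(H)\neq\emptyset$, since otherwise $K$ would be part of the component $C$ of $G-u-V(H)$.

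The second step is to delete $V(K)\cap V(H)$ from $H$. Set $H':=H-V(K)$, let $\mathcal P'$ consist of the nonempty sets $P\setminus V(K)$ for $P\in\mathcal P$, and obtain $\mathcal D'$ from $\mathcal D$ by replacing each part with its trimmed version and dropping parts that become empty. Then $S\subseteq V(H')$ because $S\cap V(K)=\emptyset$, widths do not increase, and $H'$ is still an induced subgraph. Also $\mathcal D'$ is a path-decomposition of $H'/\mathcal P'$: adjacency between trimmed parts implies adjacency between the original parts, and each trimmed part occupies the same consecutive bags as before. Strictly, the trimmed parts should be matched to bag occurrences by their original names; since all trimmed parts remain distinct, this is harmless.

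The main obstacle is the third step, checking the neighbourhood condition for the components of $G-u-V(H')$. Each such component is either a component of $G-u-V(H)$ left untouched, or a component $C^*$ that is the union of $C$, the set $V(K)$, and possibly other components of $G-u-V(H)$ adjacent to $K$.

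For an untouched component $C''\neq C^*$, its neighbourhood does not meet $V(K)$. Indeed, any neighbour of $C''$ in $K$ would lie in $V(K)\setminus V(H)$ or in $V(K)\cap V(H)$. In the first case $C''$ would be merged into $C^*$. In the second case we would have $V(C'')\subseteq V(K)$ after all, because $C''$ avoids $C$ and therefore sits inside some component of $G-V(C)$, which must be $K$. So the old bag still covers $N(C'')$ after trimming.

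For $C^*$ itself, we claim $N_G(V(C^*))\setminus\{u\}\subseteq N_G(V(C))$. This is the key containment, and it comes from the fact that $K$ is separated from the rest of the graph by $C$. Any neighbour of $V(K)$ lies in $V(C)$. Any other component $C'$ of $G-u-V(H)$ absorbed into $C^*$ lies inside $K$, because $C'$ avoids $V(C)$ and touches $K$, so its neighbours lie in $K\cup C$. The neighbours of $C$ outside $K$ are unchanged. Therefore the bag that covered $N_{G-u}(V(C))$ in $\mathcal D$ also covers $N_{G-u}(V(C^*))$ in $\mathcal D'$. This contradicts the minimality of $|V(H)|$, since $V(H')\subsetneq V(H)$.
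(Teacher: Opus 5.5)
Your proof is correct and takes essentially the same route as the paper. Your $H'=H-V(K)$ is the paper's $A=V(H)\setminus V(C')$, you trim the partition and bags in the same way, and you use the same key fact: the only merged component is $C\cup K$, whose neighbourhood in $G-u$ lies in $N_{G-u}(V(C))\setminus V(K)$. Your description of $C^*$ is a bit roundabout, since any absorbed components already lie inside $K$ and hence $C^*=C\cup K$ exactly, but this does not affect correctness.
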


\begin{proof}
    Suppose for contradiction that $G-V(C)$ is not connected for some connected component $C$ of $G-u - V(H)$.  
    This implies that there exists a connected component $C'$ of $G - V(C)$ such that $V(C')$ is disjoint from $\{u\} \cup N_{G}(u)$.  
    Since $G$ is connected, there exists an edge $vw \in E(G)$ such that $v\in V(C)$ and $w\in V(C')$.  
    Because $C$ is a connected component of $G-u - V(H)$ and $w \in N_G(V(C))$, 
    we have $w\in \{u\}\cup V(H)$. 
    Since $w \in V(C')$ and $u \not\in V(C')$, we deduce that $w \in V(H)$.

    Let $A:= V(H)\setminus V(C')$.
    Because $w\in V(H)\cap V(C')$, we have $|A| < |V(H)|$.
    Let $\mathcal P' := \{P\cap A \mid P\in \mathcal P\}\setminus \{\emptyset\}$.
    Let $\mathcal D'$ be the path-decomposition of $G[A] / \mathcal P'$ obtained from $\mathcal D$ by replacing each
    bag $B$ by $\{P \cap A \mid P \in B\} \setminus \{\emptyset\}$.
    Our goal is to show that $\mathcal D'$ is an $x$-partition-path-decomposition of $(G-u, S)$ of width at most $y$ with associated subgraph $G[A]$ and associated partition $\mathcal P'$. 
    Since $|A| < |V(H)|$, this will contradict the choice of $H$ and conclude the proof of the lemma.  
    The only nontrivial thing to show is that for every connected component $C''$ of $G-u - A$, there exists a bag $B'$ in $\mathcal D'$ such that the union of the sets in $B'$ contains $N_{G-u}(V(C''))$.
     
    Let $C''$ be a connected component of $G-u - A$.  
    If $V(C'')$ is disjoint from $V(H)\setminus A = V(C')\cap V(H)$, then $C''$ is also a connected component of $G-u - V(H)$.  
    Thus, there exists a bag $B\in \mathcal D$ such that $N_{G-u}(V(C'')) \subseteq \bigcup_{P\in B} P$.  
    Because $C''$ is a connected component of $G-u - A$, we have $N_{G-u}(V(C''))\subseteq A$, and thus $N_{G-u}(V(C'')) \subseteq \bigcup_{P\in B} (P\cap A)$.  
    Moreover, $\{P\cap A \mid P\in B\} \setminus \{\emptyset\}$ is a bag of $\mathcal D'$, which concludes this case.

    We consider now the remaining case: $V(C'')$ contains at least one vertex from $V(C') \cap V(H)$. 
    Recall that $C$ and $C'$ are connected subgraphs of $G-u - A$,
    and that there is an edge between $V(C)$ and $V(C')$ in $G$.
    Hence, $V(C) \cup V(C')$ induces a connected subgraph of $G-u - A$.
    Moreover, $N_{G-u}(V(C')) \subseteq V(C)$ (because $C'$ is a connected component of $G-V(C)$), 
    and $N_{G-u}(V(C)) \subseteq V(H) \subseteq V(C') \cup A$ (because $C$ is a connected component of $G-u - V(H)$).
    Therefore, $N_{G-u}(V(C) \cup V(C')) \subseteq A$.
    This implies that $V(C) \cup V(C')$ induces a connected component of $G-u - A$.
    Because $V(C'')$ intersects $V(C')$,
    we deduce that $V(C) \cup V(C') = V(C'')$.
    In particular,
    \[
        N_{G-u}(V(C'')) \subseteq A \cap (N_{G-u}(V(C)) \cup N_{G-u}(V(C'))) \subseteq N_{G-u}(V(C)) \cap A.
    \]
    Since $\mathcal{D}$ is an $x$-partition-path-decomposition of $(G-u,S)$,
    and because $C$ is a connected component of $G-u - V(H)$,
    there is a bag $B$ in $\mathcal D$ such that $N_{G-u}(V(C))\subseteq \bigcup_{P\in B} P$.
    Therefore, we conclude that $N_{G-u}(V(C'')) \subseteq N_{G-u}(V(C)) \cap A \subseteq \bigcup_{P \in B} (P \cap A)$.
    Since $\{P \cap A \mid P \in B\} \setminus \{\emptyset\}$ is a bag in $\mathcal{D}'$, 
    this concludes the proof that $\mathcal{D}'$ is an $x$-partition-path-decomposition of $(G-u,S)$ of width at most $y$. 
\end{proof}

Now we prove the following technical lemma, which directly implies \cref{apexforest}.

\begin{lemma}\label{technical_for_apexforest}
    For every tree $T$ with $t\geq 2$ vertices, radius $h$, and maximum degree $d$, 
    for every graph $G$ not containing $T^+$ as a minor, 
    and for every vertex $u\in V(G)$, there exists a partition $\mathcal P$ of $V(G)$ of width at most $2(t-1)d$ 
    such that 
    $ \{ u \} \in \mathcal P$, and 
    $G/\mathcal P$ has a tree-decomposition of width at most $4h-1$ such that each bag containing $\{u\}$ has size at most $2h+1$.
\end{lemma}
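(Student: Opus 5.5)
We argue by induction on $|V(G)|$; we may assume $G$ connected, handling components separately and adding $u$ as a singleton where needed. The key observation is that if $G$ is $T^+$-minor-free, then $G-u$ has no $S$-rooted model of $T$ with $S:=N_G(u)$, since adding $\{u\}$ as the apex branch set would give a $T^+$-model. \cref{srooted} applied to $(G-u,S)$ then gives a $((2t-1)d)$-partition-path-decomposition $\mathcal D$ of width at most $2h-1$, with associated induced subgraph $H$ and partition $\mathcal P_H$. Among all such decompositions we take one with $|V(H)|$ minimal, so that \cref{connexite} applies.

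The width $2(t-1)d$ in the statement is slightly smaller than $(2t-1)d$. To recover it, we would instead apply the argument of \cref{lemmetechniquepartitions} with $T$ rooted at a center. We would also use that $(G-u)$ has no $S$-rooted model of $T$, so the Erdős–Pósa step uses $d$ components together with $\pw(\cdot,S)\le 2t-2$; the bound from \cref{erdos_posa_forest_general} is then $(d-1)(2t-1)$, or one can save the extra single-vertex component. If this bookkeeping does not work out, we accept the small constant discrepancy and fix it last.

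The top part of the tree-decomposition is built as follows. Take the path-decomposition $\mathcal D$ of $H/\mathcal P_H$, of width at most $2h-1$ and hence with bags of at most $2h$ parts, and add the part $\{u\}$ to every bag. This gives a path-decomposition of $G[V(H)\cup\{u\}]/(\mathcal P_H\cup\{\{u\}\})$ whose bags have size at most $2h+1$ and which covers every edge at $u$, because $S\subseteq V(H)$. This is exactly the required condition on bags containing $\{u\}$.

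Next, for each component $C$ of $G-u-V(H)$, there is a bag $B_C$ of $\mathcal D$ containing all parts meeting $N(V(C))$, and $u\notin N(V(C))$ since $N(u)=S\subseteq V(H)$. By \cref{connexite}, $G-V(C)$ is connected, so contracting it to a single vertex $u_C$ yields a proper minor $G_C$ of $G$. This minor is still $T^+$-minor-free, and it consists of $C$ plus a vertex $u_C$ adjacent exactly to $N(V(C))$. By induction on $G_C$ with distinguished vertex $u_C$, we obtain a partition $\mathcal P_C$ containing $\{u_C\}$ and a tree-decomposition of width at most $4h-1$ in which the bags containing $\{u_C\}$ have size at most $2h+1$. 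We then replace $\{u_C\}$ in every such bag by the at most $2h$ parts of $B_C$, which gives bags of size at most $2h+2h=4h$, that is, width at most $4h-1$. Finally, we attach this decomposition to the top decomposition at the node of $B_C$.

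The step requiring care is checking that this substitution preserves the tree-decomposition axioms. The bags containing $u_C$ form a subtree, so the parts of $B_C$ appear on a connected set that is glued at $B_C$. Every edge between $C$ and $N(V(C))$ becomes an edge from a part of $\mathcal P_C$ to a part of $B_C$, and it is covered because the corresponding edge to $u_C$ was covered in $G_C$. Bags not containing $u_C$ are unchanged. The main obstacle is ensuring that $G_C$ is strictly smaller, which needs $|V(G)\setminus V(C)|\ge2$. This holds since $u\in V(G)\setminus V(C)$ and $S\neq\emptyset$ whenever $C$ exists, because $G$ is connected.
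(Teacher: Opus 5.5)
Your construction is the paper's proof step for step. The steps are: $S=N_G(u)$; \cref{srooted} applied to $(G-u,S)$ with $|V(H)|$ minimal; \cref{connexite} to contract $G-V(C)$ into $u_C$; induction on $G_C$; substituting the at most $2h$ parts of $B_C$ for $\{u_C\}$; and gluing at the node of $B_C$. The bag counts $2h+1$ and $4h$, the covering of edges between $C$ and $N(V(C))$, and the check $|V(G_C)|<|V(G)|$ are all as in the paper.

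\textbf{The gap: partition width.} The statement asks for parts of size at most $2(t-1)d$, and your hedge does not get there.

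\emph{What the argument actually proves.} The parts of $\mathcal P_H$ come from \cref{srooted} and may have size $(2t-1)d>2(t-1)d$. The induction hypothesis must carry at least the bound of $\mathcal P_H$. So as written you prove the lemma only with $(2t-1)d$. That is also exactly what the paper's proof delivers: it invokes the induction hypothesis with width $(2t-1)d$ and concludes that every part has size at most $(2t-1)d$, which is the bound stated in \cref{apexforest}. The $2(t-1)d$ in the statement is not reached by the written argument.

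\emph{Why your repair is incomplete.} You propose using only $d$ components in \cref{erdos_posa_forest_general}, hence $|X|\le (d-1)(2t-1)\le 2(t-1)d$. This needs justification you do not give. Omitting the extra single-vertex component of $F$ in the proof of \cref{lemmetechniquepartitions} is legitimate only when the current root vertex $r$ lies in $S$; otherwise the root branch set $\{r\}$ is not $S$-rooted. So it fails even at the top level unless you deliberately choose $r\in S$. It also fails in general for the roots $r'\in R$ used in condition~\ref{c2} of \cref{metalemmeproduit}.

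\emph{One way to complete it.} Reprove \cref{lemmetechniquepartitions}, keeping $d$ fixed as the maximum degree of the original tree, under the invariant that $r\in S$ or the root of $T$ has degree at most $d-1$. The invariant can be maintained as follows.
\begin{itemize}
\item At the top level, pick $r\in S$. This is possible since each component of $G-u$ meets $N_G(u)$.
\item The subtrees passed to condition~\ref{c2} have root degree at most $d-1$. So $F$ has at most $d$ components even with the extra vertex.
\item Condition~\ref{c3} keeps the same tree, and gives $r'\in S'$ whenever $r\in S$, since the branch set of $r'$ contains $r$.
\end{itemize}
This yields parts of size at most $\max\{(d-1)(2t-1),2d\}\le 2(t-1)d$. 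Your argument then goes through unchanged with $2(t-1)d$.
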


\begin{proof}
    The proof is by induction on $|V(G)|$. The base case $|V(G)|=1$ is trivial.
    Now suppose $|V(G)|>1$.

    Now we do the inductive case.  If $G$ is not connected, we may apply the induction hypothesis on each connected component of $G$.  So we may assume that $G$ is connected.  
    Let $S:= N_G(u)$. 
    If $G-u$ contains an $S$-rooted model of $T$, then this model together with $\{u\}$ yields a model of $T^+$ in $G$, a contradiction.  
    So $G-u$ has no $S$-rooted model of $T$, and so by \cref{srooted}, 
    there exists a $(2t-1)d$-partition-path-decomposition of $(G-u,S)$ of width at most $2h-1$.  
    Let $H$, $\mathcal P_H$ and $\mathcal D_H$  be, respectively, the induced subgraph of $G-u$, the partition of $V(G-u)$ and the path-decomposition associated to this decomposition.   
    For the purpose of this proof, it will be convenient to see the path-decomposition not as a sequence of bags (as in previous proofs) but as a tree-decomposition where the tree indexing the decomposition is a path; we let $P$ denote this path. 
    We choose such a tuple $(H, \mathcal{P}_H, \mathcal{D}_H, P)$ with $|V(H)|$ minimum.
    Let $C_1, C_2, \ldots, C_m$ be the connected components of $G-u - V(H)$.  
    By assumption, for each $i \in \{1, \dots, m\}$, there exists a bag $B_i$ in $\mathcal D_H$ such that the union of the sets in $B_i$ contains $N_{G-u}(V(C_i))$, and even $N_G(V(C_i))$ since $N_G(u)\subseteq V(H)$.  
    We remark that $B_i$ and $B_j$ may possibly refer to the same bag for $i\ne j$.  
    
    Let $i \in \{1, \dots, m\}$.
    By the minimality of $|V(H)|$ and by \cref{connexite}, $G-V(C_i)$ is connected.  
    Let $G'_i$ be the graph obtained from $G$ by contracting the connected subgraph induced by $G-V(C_i)$ into one vertex $u_i$.  
    Thus, $V(G'_i)=V(C_i)\cup \{u_i\}$ and $G'_i$ is a minor of $G$, and in particular $G'_i$ has no $T^+$ minor. 
    Since $G$ is connected and $|V(G)|\geq 2$, it follows that $S$ and $V(H)$ are not empty, and hence  $|V(G'_i)|< |V(G_i)|$.
    Hence, by the induction hypothesis, there exists a partition $\mathcal{P}_i$ of $V(G'_i)$ of width at most $(2t-1)d$ and such that $\{u_i\} \in \mathcal P_i$, and $G'_i/\mathcal{P}_i$ has a tree-decomposition $\mathcal D_i$ of width at most $4h-1$ with associated tree $Y_i$ such that each bag containing $\{u_i\}$ has size at most $2h+1$. 
    Let $\mathcal D'_i$ be the tree-decomposition of the graph induced in $G$ by $V(C_i) \cup \bigcup_{Q \in B_i} Q$ 
    obtained from $\mathcal D_i$ by replacing every bag $B$ containing $\{u_i\}$ by $\big(B \setminus \big\{\{u_i\}\big\}\big) \cup B_i$.  
    (We keep the same tree associated to the tree-decomposition.) 
    For every bag $W'$ of $\mathcal{D}'_i$, either $W'$ is a bag in $\mathcal{D}_i$ and so has size at most $4h$,
    or $W' = \big(W \setminus \big\{\{u_i\big\}\}\big) \cup B_i$ for some bag $W$ of $\mathcal{D}_i$ with $\{u_i\} \in W$.
    But then, $|W| \leq 2h+1$, and so $|W'| \leq (2h+1)-1 + |B_i| \leq 4h$.
    Hence, $\mathcal D'_i$ has width at most $4h-1$.

    Now define $\mathcal P := \mathcal P_H \cup (\bigcup_{i=1}^m \mathcal P_i) \setminus \big(\bigcup_{i=1}^m \big\{\{u_i\}\big\}\big)$, 
    and let $\mathcal D$ and $Y$ be the tree-decomposition of $G/\mathcal P$ and the tree associated to $\mathcal D$, obtained from $\mathcal D_H$, $P$, the $\mathcal D'_i$'s and the $Y_i$'s in the following way:
    
    \begin{enumerate}
        \item Add $\{u\}$ to all the bags of $\mathcal D_H$.  
        \item For each $i\in \{1, \dots, m\}$, link one vertex of $Y_i$ corresponding to a bag of $\mathcal D_i$ containing $\{u_i\}$ to a vertex of $P$ whose corresponding bag is $B_i$.  This gives the tree $Y$. 
    \end{enumerate}
    
     Let us now show that $\mathcal P$ and $\mathcal D$ have the properties stated in the lemma.  By construction, $\mathcal P$ is a partition of $V(G)$, and $\{u\}\in \mathcal P$. 
     
     Because every part of $\mathcal P$ is either $\{u\}$, or a part of $\mathcal P_H$, or a part of $\mathcal P_i$ for some $i\in \{1, \dots, m\}$, we have by induction that each part of $\mathcal P$ has size at most $(2t-1)d$.
     
    All the bags $W$ of $\mathcal{D}$ containing $\{u\}$ are of the form $W = W_H \cup \big\{\{u\}\}$ for some bag $W_H$ of $\mathcal{D}_H$, and so $|W| \leq 2h+1$.
            Every other bag of $\mathcal{D}$ is a bag of $\mathcal{D}'_i$ for some $i \in \{1, \dots, m\}$, and so has size at most $4h$ since $\mathcal{D}'_i$ has width at most $4h-1$.            
            
    The only parts of $\mathcal P$ that potentially appear in bags coming from multiples decompositions among $\mathcal D_H$ and the $\mathcal D'_i$'s are bags that appear in $\mathcal D_H$, so it is easy to check that, by construction, for every part of $\mathcal P$, the vertices of $Y$ corresponding to bags of $\mathcal D$ containing this part span a subtree of $Y$. Hence, $\mathcal{D}$ is a tree-decomposition of $G/\mathcal P$, as claimed.  
    
    This proves that $\mathcal{P}$ and $\mathcal{D}$ have the desired properties, and concludes the proof of the lemma.
\end{proof}

\section{Open questions} \label{sec:open}
Some of the bounds we have provided are not known to be tight:
\begin{enumerate}
    \item The bound of $t-2$ on the size of the clique in the blow-up in \cref{tree} is close to optimal but there might still be a small room for improvement; we are only aware of a $\Omega(\frac t h)$ lower bound, as explained in the introduction. 
    \item The bound of $(2t-1)d$ on the size of the clique in the blow-up in \cref{apexforest} is likely not optimal. We expect that $O(t)$ bound should hold in this setting as well.  
    Again, we only know of a lower bound of $\Omega(\frac t h)$. 
    \item We do not know if the bound of $4h-1$ on the treewidth in \cref{apexforest} is optimal, but we know that it should be at least $2h$, as mentioned in the introduction. 
    We expect the lower bound to be closer to the truth. 
\end{enumerate}

\bibliographystyle{plainnat}
\bibliography{Bibliography}

\appendix
\section{Short proof of Theorem~\ref{leafseymour-improved}}

Let us emphasize again that this short proof is heavily based on  the proofs in \cite{leaf2015tree}, in particular the proofs of statements 4.3 and 4.4 in that paper: It is essentially a copy of these proofs with some parts removed, and with some easy adaptations. To emphasize the similarities, we chose to stay as close as possible to the presentation in \cite{leaf2015tree}. 

First, we need to recall some definitions. 
For $k\in \mathbb N$, a \defin{bramble} of order $k$ in a graph $G$ is a set $\mathcal B$ of non-empty connected subgraphs of $G$, such that 
\begin{enumerate}
    \item every two members $B_1, B_2 \in \mathcal B$ touch, that is, either $V(B_1) \cap V(B_2)\ne \emptyset$, or there is an edge of $G$ with one end in $V(B_1)$ and the other in $V(B_2)$,
    \item for every $X \subseteq V (G)$ with $|X| < k$, there exists $B \in \mathcal B$ with $X \cap V(B) = \emptyset$.
\end{enumerate}
It was proved in \cite{seymour1993graph} that for every positive integer $k$, a graph has treewidth at least $k-1$ if and only if it has a bramble of order $k$.

A \defin{separation} of $G$ is a pair $(A, B)$ of subsets of $V(G)$ such that $A\cup B = V(G)$ and every edge of $G$ is either contained in $A$ or contained in $B$.
The \defin{order} of $(A,B)$ is |$A \cap B|$.
For $X,Y \subset V(G)$, an \defin{$X$--$Y$ path} is a path in $G$ that is either a one-vertex path with the vertex in $X \cap Y$ or a path with one endpoint in $X$ and the other endpoint in $Y$ such that no internal vertices are in $X \cup Y$. 

If $(A, B)$ is a separation of $G$, we say that $(A, B)$ \defin{left-contains} a model $\{W_v \subseteq V(G) \mid v \in V(H)\}$ of $H$ if $|A \cap B| = |V(H)|$, for every $v\in V(H)$, $W_v\subseteq A$, and $|W_v \cap A\cap B|=1$. If such a model exists we say that $(A, B)$ \defin{left-contains} $H$.

We also need Menger's Theorem:  

\begin{theorem}[Menger's Theorem]
    Let $G$ be a graph and $X,Y \subset V(G)$.
    There exists a separation $(A,B)$ of $G$ such that $X \subset A$, $Y \subset B$, and there exists $|A \cap B|$ pairwise disjoint $X$--$Y$ paths.
\end{theorem}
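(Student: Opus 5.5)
We would prove the statement in its classical form: the minimum size $k$ of a set $Z\subseteq V(G)$ meeting every $X$--$Y$ path equals the maximum number of pairwise disjoint $X$--$Y$ paths. From a separator $Z$ of size $k$ we then read off the separation. Let $A$ be $Z$ together with all vertices reachable from $X\setminus Z$ in $G-Z$, and let $B:=(V(G)\setminus A)\cup Z$. Then $A\cup B=V(G)$ and $A\cap B=Z$. There is no edge between $A\setminus Z$ and $B\setminus Z$, by the definition of reachability. Clearly $X\subseteq A$. We also have $Y\subseteq B$: a vertex of $Y$ lying in $A\setminus Z$ would give an $X$--$Y$ path avoiding $Z$, which is impossible. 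Since $|A\cap B|=|Z|=k$, the $k$ disjoint paths finish the proof. The inequality ``at most $|Z|$ disjoint paths'' is trivial, because each path must use its own vertex of $Z$. So the whole content is the existence of $k$ disjoint paths.

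We show this by induction on $|E(G)|$. If $G$ has no edges, the $X$--$Y$ paths are exactly the one-vertex paths on $X\cap Y$. Hence $k=|X\cap Y|$, and these vertices are the required paths. Otherwise pick an edge $e=uv$ and consider the contraction $G/e$, with new vertex $v_e$. We interpret $v_e$ as lying in $X$ (respectively $Y$) if $u$ or $v$ does.

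\emph{Case 1:} every $X$--$Y$ separator of $G/e$ has size at least $k$. By induction, $G/e$ has $k$ disjoint $X$--$Y$ paths. These lift to $G$: at most one path uses $v_e$, and we replace $v_e$ by $u$, $v$ or the edge $uv$ as needed.

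\emph{Case 2:} $G/e$ has a separator $Z'$ with $|Z'|<k$. Then $v_e\in Z'$, since otherwise $Z'$ would already separate $X$ from $Y$ in $G$. Thus $Z:=(Z'\setminus\{v_e\})\cup\{u,v\}$ is an $X$--$Y$ separator of $G$, and $|Z|\le k$, so $|Z|=k$ by minimality of $k$. Now work in $G-e$. Every $X$--$Z$ separator in $G-e$ is also an $X$--$Y$ separator in $G$: every $X$--$Y$ path in $G$ contains a subpath from $X$ to $Z$, and we may take that subpath to avoid $e$ because $u,v\in Z$. Hence such separators have size at least $k$. By induction on $G-e$ there are $k$ disjoint $X$--$Z$ paths, and symmetrically $k$ disjoint $Z$--$Y$ paths. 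Since $|Z|=k$, each family has exactly one path ending at each vertex of $Z$.

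The main obstacle is gluing the two families into $k$ disjoint $X$--$Y$ paths. The $X$--$Z$ paths meet $Z$ only at their last vertex, and the $Z$--$Y$ paths only at their first vertex. An $X$--$Z$ path cannot meet a $Z$--$Y$ path outside $Z$: such an intersection would produce an $X$--$Y$ walk in $G$ avoiding $Z$, contradicting that $Z$ is a separator. Therefore concatenating the paths at each $z\in Z$ yields $k$ disjoint walks. Each of these contains an $X$--$Y$ path in the strict sense of the paper's definition, obtained by taking its last vertex in $X$ and then the next vertex in $Y$, so we get $k$ disjoint $X$--$Y$ paths. This completes the induction and, with the construction of $(A,B)$ above, proves the theorem.
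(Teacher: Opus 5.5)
The paper does not prove this statement. It quotes Menger's theorem as a classical result and uses it only in the proof of the first claim in the appendix. Your proof is correct and self-contained.

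The passage from a minimum $X$--$Y$ separator $Z$ to a separation is right. With $A$ equal to $Z$ together with everything reachable from $X\setminus Z$ in $G-Z$, you get $A\cap B=Z$, no edge joins $A\setminus Z$ to $B\setminus Z$, $X\subseteq A$ and $Y\subseteq B$. The induction on $|E(G)|$ is G\"oring's contraction proof of the vertex form of Menger's theorem, as found in Diestel's textbook. You handle the delicate steps correctly:
\begin{itemize}
\item a separator $Z'$ of $G/e$ with $|Z'|<k$ must contain $v_e$;
\item every $X$--$Z$ (resp.\ $Z$--$Y$) separator of $G-e$ separates $X$ from $Y$ in $G$, because the initial (resp.\ final) segment of an $X$--$Y$ path up to its first (resp.\ from its last) vertex of $Z$ contains only one vertex of $Z\supseteq\{u,v\}$ and hence avoids $e$;
\item an $X$--$Z$ path cannot meet a $Z$--$Y$ path outside $Z$.
\end{itemize}
If one reads the paper's definition of an $X$--$Z$ path literally, its $X$-end could also lie in $Z$. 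Your claim that each of the $k$ paths meets $Z$ only in its last vertex still holds by pigeonhole, since the paths are disjoint and $|Z|=k$.

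Compared with the paper, which simply cites the theorem, your version makes the argument self-contained. It also delivers exactly the form used in the appendix: a separation whose middle $A\cap B$ is a minimum separator. The paper loses nothing by citing, since none of its arguments depend on how Menger's theorem is proved.
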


We will show the following lemma, which directly implies \cref{leafseymour-improved} (by letting $G[B]-A$ model the apex vertex): 
\begin{lemma}
Let $w \geq 1$ be an integer, let $T$ be a tree with $|V (T)| = w$, and let $G$ be a graph with
treewidth at least $w$. Then there is a separation $(A, B)$ of G such that
\begin{enumerate}
    \item $|A\cap B|=w$,
    \item $G[B]-A$ is connected and every vertex in $B\cap A$ has a neighbor in $B\setminus A$, and
    \item $(A, B)$ left-contains $T$.
\end{enumerate}
\end{lemma}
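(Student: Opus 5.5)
The plan is to prove the lemma by induction on $w$, following statements 4.3 and 4.4 of \cite{leaf2015tree}, but with a strengthened hypothesis that keeps track of a bramble. Since $\tw(G)\geq w$, the result of \cite{seymour1993graph} gives a bramble $\mathcal B$ of order $w+1$. For $w'\leq w$ and a tree $T'$ on $w'$ vertices, the statement I will prove by induction on $w'$ is: there is a separation $(A,B)$ of order $w'$ satisfying (i)--(iii) for $T'$, and in addition (iv) every member of $\mathcal B$ disjoint from $A\cap B$ lies in $G[B]-A$. Since $|A\cap B|=w'<w+1$, such a member exists. Property (iv) is what prevents the separation from ``pointing the wrong way''.

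For the base case $w'=1$, take a vertex $a$ and the component $K$ of $G-a$ containing the bramble members that avoid $a$. These members pairwise touch and are connected, so they all lie in one component. Put $B=V(K)\cup\{a\}$ and $A=V(G)\setminus V(K)$. I must choose $a$ to have a neighbour in $K$; for instance, take $a\in N(V(K'))$ for a suitable component $K'$. I will then iterate, replacing $a$ by a neighbour of $K$ if necessary.

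For the inductive step, write $T=T'+v$ with $v$ a leaf whose neighbour in $T$ is $p$. Among all separations $(A,B)$ that satisfy (i)--(iv) for $T'$ (of order $w-1$), choose one with $|B|$ minimal. Let $x\in A\cap B$ be the vertex of the branch set of $p$ lying in $A\cap B$. By (ii), $x$ has a neighbour $y\in B\setminus A$. Apply Menger's Theorem in $G[B]$ to $X:=(A\cap B)\cup\{y\}$ (of size $w$) and $Y:=V(\beta)$, where $\beta\in\mathcal B$ is a member avoiding $A\cap B$; more robustly, take $Y$ to be the union of such members. This gives a separation $(A'',B'')$ of $G[B]$ and $k=|A''\cap B''|$ disjoint $X$--$Y$ paths.

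If $k=w$, set $(A^*,B^*):=(A\cup A'',B'')$. Extending each old branch set along its path, and letting the path from $y$ be the branch set of $v$ (it is adjacent to $x$), shows that $(A^*,B^*)$ left-contains $T$. To obtain (ii), I will then move every component of $G[B^*]-A^*$ not containing the bramble members into $A^*$. The disjoint paths toward $Y$ guarantee that every separator vertex still has a neighbour on the remaining side, so the order stays $w$ and (iv) is preserved.

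If $k<w$, the Menger separator $A''\cap B''$ has size at most $w-1$. Concatenating the $k$ paths with the old model produces a separation of order $w-1$ (after dropping the unused path from $y$, or rerouting) that still left-contains $T'$. Because $\mathcal B$ has order $w+1$, its side is still $B''$, and $B''$ is strictly smaller than $B$ since $y\notin B''\setminus A''$, or at least $B''\subsetneq B$ after cleaning. This contradicts the minimality of $|B|$.

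I expect this case analysis to be the main obstacle. The difficulty is making sure that the contradiction separation really has order exactly $w-1$, satisfies (ii) after the cleanup, and left-contains $T'$ even when the path from $y$ is among the $k$ paths. In the latter case I would instead let $y$'s path replace the path from $x$, using that $x$ and $y$ are adjacent, so the branch set of $p$ survives. Finally, the theorem follows by taking $w=|V(T)|$: if $\tw(G)\geq |V(T^+)|-1$, then the connected graph $G[B]-A$, which is adjacent to every vertex of $A\cap B$, serves as the branch set of the apex, yielding a $T^+$-model.
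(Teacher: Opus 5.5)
\textbf{There is a genuine gap in your inductive step.} Every case relies on the claim that the bramble still lies on the $B''$ side of the Menger separator. You use this for (iv), for the cleanup when $k=w$, and for the contradiction when $k<w$. It does not follow from $\mathcal B$ having order $w+1$, and it can fail.

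Take $G=K_3$ on $\{x,y,z\}$, $w=2$, $\mathcal B$ the three singletons (order $3$), and $(A,B)=(\{x\},V(G))$. Since $K_3$ is $2$-connected, every order-$1$ separation satisfying (iv) has the form $(\{v\},V(G))$, so $|B|$ is minimal.
\begin{itemize}
\item With $Y=\{z\}$ you get $k=1<w$, and the only separator is $\{z\}$. The separation $(A\cup A'',B'')=(V(G),\{z\})$ left-contains $T'$, but the members avoiding $\{z\}$, namely $\{x\}$ and $\{y\}$, lie on the left. So (iv) fails, and there is no contradiction with minimality even though $k<w$.
\item With $Y=\{y,z\}$ you get $k=2$, but Menger's Theorem may return $(V(G),\{y,z\})$. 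Its right side minus the separator is empty, so (ii) and (iv) fail and no cleanup helps.
\end{itemize}
The reason is that members avoiding $A\cap B$ lie in $Y$ but may meet the new separator. Members avoiding the new separator may meet $A\cap B$ and sit on the left. The union of the two separators can have more than $w$ vertices, so the order of $\mathcal B$ gives you nothing here.

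There is a second, independent problem. When $k\le w-2$, a separator of that size cannot carry a left-contained model of $T'$, so concatenation produces no order-$(w-1)$ competitor at all. Minimality of $|B|$ among separations of order exactly $w-1$ says nothing about smaller-order separations further to the right. The same weakness undermines your claim that after cleanup every separator vertex keeps a neighbour on the right; if one does not, you must delete it, which drops the order.

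\textbf{What is missing is the paper's extremal condition.} The paper fixes a \emph{maximal} bramble, so that $\beta(X)$ is a single well-defined component. It then chooses one separation, over all orders $1\le k\le w$ simultaneously, that
\begin{itemize}
\item left-contains $T_k$,
\item has $\beta(A\cap B)\subseteq B$,
\item admits no separation of strictly smaller order further right with $\beta$ on its right side,
\item and, subject to these, maximizes $|A|-|B|$.
\end{itemize}
Bramble consistency is then used only in the valid direction: from a separation further right whose $\beta$-side is known, to one lying in between. This is Claim~1, where Menger shows that no other separation of the same order further right has $\beta$ on its right.

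Property (ii) follows in two parts. Replacing $B$ by $(A\cap B)\cup\beta(A\cap B)$ keeps the separator and the model. A separator vertex without a neighbour in $B\setminus A$ would give a smaller-order separation further right, which is excluded.

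Finally, if $k<w$, one simply adds a single neighbour $y\in B\setminus A$ of the relevant separator vertex to $A$. The separation $(A\cup\{y\},B)$ left-contains $T_{k+1}$ and trivially keeps $\beta$ on the right. It satisfies the no-smaller-order condition by Claim~1 together with the corresponding condition for $(A,B)$. This contradicts the maximality of $|A|-|B|$. In the triangle example, this move gives exactly the separation $(\{x,y\},V(G))$ that your Menger step can miss. No Menger step and no cleanup are needed when growing the tree.
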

\begin{proof}
Choose a vertex $t_1$ of $T$, and number the other vertices $t_2, \ldots ,t_w$ in such a
way that for $2 \leq i \leq w$, $t_i$ is adjacent to one of $t_1, \ldots ,t_{i-1}$. For $1 \leq i \leq w$, let $T_i$ be the subtree of $T$ induced by ${t_1, \ldots ,t_i}$. Now let $G$ be a graph with treewidth at least
$w$; we know that it has a bramble $\mathcal B$ of order at least $w+1$. Choose $\mathcal B$ maximal; thus if $C$ is a connected subgraph of $G$ including a member of $\mathcal B$, then $C \in \mathcal B$ (because otherwise it could be added to $\mathcal B$, contrary to maximality). For each $X \subseteq V (G)$ with $|X| \leq w$,
there is therefore a unique component of $G - X$ that belongs to $\mathcal B$; let its vertex set be $\beta(X)$. 

Choose $v \in \beta(\emptyset)$; then $\beta(\{v\})\subseteq \beta(\emptyset)$, and the separation $(V(G) \setminus (\beta(\emptyset)\setminus\{v\}), \beta(\emptyset))$ left-contains $T_1$. Consequently we may choose a separation $(A, B)$ of $G$ with the following properties:
\begin{enumerate}
    \item $(A, B)$ has order at least one, and at most $w$; say order $k$ where $1 \leq k \leq w$,
    \item $(A, B)$ left-contains $T_k$,
    \item $\beta(A\cap B)\subseteq B$,
    \item there is no separation $(A', B')$ of $G$ of order strictly less than $k$, with $A \subseteq A'$ and $B' \subset B$, and such that $ \beta(A'\cap B')\subseteq B'$
    \item subject to these conditions, $|A|-|B|$ is maximum.
\end{enumerate}

\begin{claim}
    There is no separation $(A', B')$ of $G$ of order $k$ with $A\subseteq A'$, $B' \subseteq B$ and $(A, B)\ne (A', B')$ such that $\beta(A'\cap B')\subseteq B'$.
    \label{clm1}
\end{claim}
\begin{proofclaim}
Suppose that there is such a separation $(A', B')$. From the optimality of $(A, B)$, $(A', B')$ does not left-contain $T_k$. 
Consequently there do not exist $k$ vertex-disjoint  $(A\cap B)$--$(A'\cap B')$ paths; and so by Menger's Theorem there is a separation $(C, D)$ of order less than $k$, with $A \subseteq C$ and $B' \subseteq D$. Since $\beta(C\cap D)$ touches $\beta (A'\cap B')$, and $\beta(A'\cap B')\subseteq B'\subseteq D$, it follows that $\beta(C\cap D) \subseteq D$. But this contradicts the fourth condition above. 
\end{proofclaim}

\begin{claim}
 $G[B]-A$ is connected, and every vertex of $B\cap A$ has a neighbor in $B\setminus A$. 
 \label{clm2}
\end{claim}
\begin{proofclaim}
Now $\beta(A \cap B) \subseteq B \setminus A$ and hence is the vertex set of a connected component of $G[B]-A$. Let $D := (A\cap B)\cup \beta(A\cap B) $, and let $C := V(G) \setminus \beta(A\cap  B)$; then $(C, D) $ is a separation of $G$ satisfying the first four conditions above. From the optimality of $(A, B)$ it follows that $(A, B) = (C, D)$, and in particular, $\beta(A \cap B) = B\setminus A$. This proves the first assertion. 

For the second assertion, suppose some vertex $v \in  A \cap B$ has no neighbor in $B\setminus A$. Then $(A, B - \{v\})$ is a separation, and since $\beta(A \cap (B \setminus \{v\}))$ touches $\beta(A \cap B)$, and hence is contained in
$B \setminus \{v\}$, this contradicts the fourth condition above.
\end{proofclaim}
\begin{claim}
    $k=w$
    \label{clm3}
\end{claim}
\begin{proofclaim}
Arguing by contradiction, suppose that $k<w$. Let $\{W_j \mid 1\leq i \leq k\}$ be a model of $T_k$ in $A$, where for every $j\in \{1,\dots, k\}$, $W_j$ is the branch set of $t_j$, and contains a unique vertex $v_j$ of $A \cap B$. Let $i\in \{1,\dots, k\}$ be such that $t_{k+1}$ is adjacent in $T$ to $t_i$. By our second claim, $v_i$ has a neighbor in $B\setminus A$, say $v_{k+1}$. Let $A' := A\cup \{v_{k+1}\}$; then $(A', B)$ is a separation of $G$, and it left-contains $T_{k+1}$ (with the model $\{W_j \mid 1\leq i \leq k\}\cup \{\{v_{k+1}\}\}$). Moreover, since $\beta(A' \cap B)$ touches $\beta(A \cap B)$, it is a subset of $B$; and $(A', B)$ satisfies the fourth condition above because of our first claim. But this contradicts the optimality of $(A, B)$, and hence proves this claim.
\end{proofclaim}

This concludes the proof of the lemma. 
\end{proof}

\end{document}